\newcolumntype{L}{>{\centering\arraybackslash}m{0.46\textwidth}}
\newtheorem{theorem}{Theorem}[section]
\newtheorem{lemma}[theorem]{Lemma}
\newtheorem{corollary}[theorem]{Corollary}
\newtheorem{definition}[theorem]{Definition}
\newtheorem{remark}[theorem]{Remark}
\def\half{\frac{1}{2}}
\def\Fig#1{\ref{fig-#1}}
\def\jumpl{\lbrack\!\lbrack}
\def\jumpr{\rbrack\!\rbrack}
\def\jump#1{\jumpl #1 \jumpr}
\def\mean#1{\{\! \!\{ #1\}\! \!\}}
\def\jumpl{\lbrack\!\lbrack}
\def\jumpr{\rbrack\!\rbrack}
\def\jump#1{\jumpl #1 \jumpr}
\def\vh1{V_{h}^1}
\def\thn{\mathcal{T}_h}
\def\thm{\mathcal{T}_h}
\def\half{\frac{1}{2}}
\def\Fig#1{Fig. \ref{fig-#1}}
\def\jumpl{\lbrack\!\lbrack}
\def\jumpr{\rbrack\!\rbrack}
\def\jump#1{\jumpl #1 \jumpr}
\def\mean#1{\{\! \!\{ #1\}\! \!\}}
\def\jumpl{\lbrack\!\lbrack}
\def\jumpr{\rbrack\!\rbrack}
\def\jump#1{\jumpl #1 \jumpr}
\def\vh{V_h}
\def\sgn#1{\textrm{sign}(#1)}
\def\ssgn#1{\textrm{sign}_{\tau_h}(#1)}
\def\half{\frac{1}{2}}
\def\Fig#1{Fig. \ref{fig-#1}}
\def\jumpl{\lbrack\!\lbrack}
\def\jumpr{\rbrack\!\rbrack}
\def\jump#1{\jumpl #1 \jumpr}
\def\mean#1{\{\! \!\{ #1\}\! \!\}}
\def\jumpl{\lbrack\!\lbrack}
\def\jumpr{\rbrack\!\rbrack}
\def\jump#1{\jumpl #1 \jumpr}
\def\vh{V_h}
\def\sgn#1{\textrm{sign}(#1)}
\def\half{\frac{1}{2}}
\def\Fig#1{Fig. \ref{fig-#1}}
\def\jumpl{\lbrack\!\lbrack}
\def\jumpr{\rbrack\!\rbrack}
\def\jump#1{\jumpl #1 \jumpr}
\def\mean#1{\{\! \!\{ #1\}\! \!\}}
\def\jumpl{\lbrack\!\lbrack}
\def\jumpr{\rbrack\!\rbrack}
\def\jump#1{\jumpl #1 \jumpr}
\def\vm1{V_{m}^1}
\def\vm{V_m}
\def\sgn#1{\textrm{sign}(#1)}
\def\setfaces{\mathcal{E}_h}
\def\setintfaces{\mathcal{E}_h^0}
\def\Kp{{K'}}
\def\setneighbours{\mathcal{N}_h}
\def\setbcneighbours{\setneighbours^\partial }
\def\spacebilinearform{K_h}
\def\pertbform{\tilde{K}_h}
\def\pertwbc{\tilde{B}_h}
\def\rhsg{G_h}
\def\shap{\varphi}
\def\domain{\Omega}
\def\dofsupport{\mathcal{N}_h}
\def\graphvisc{\nu}
\def\graphviscosityab{\graphvisc_{ab}}
\def\nubc{{\nu}^\partial}
\def\unityfunction{\mathbbm{1}}
\def\lumpedbform#1{M_h^L(#1)}
\def\l2norm#1{\|#1\|}
\def\tstep{n}
\def\ntsteps{N^t}
\def\smoothalpha{\tau_h}
\def\smoothperturb{\sigma_h}
\def\greatabs#1{\left|#1\right|_{1,\smoothalpha}}
\def\lowabs#1{\left|#1\right|_{2,\smoothalpha}}
\def\quotient{\zeta}
\def\smoothquotient{Z}
\def\smax{{\rm max}_{\smoothperturb}}
\def\smoothquot{\gamma_h}
\def\smoothalphac{\tau}
\def\smoothperturbc{\sigma}
\def\smoothquotc{\gamma}
\newcommand{\JB}[1]{{#1}}
\newcommand{\flux}[1][]{
	\ifx\empty#1\empty
	\boldsymbol{f}
	\else
	\boldsymbol{f}(#1)
	\fi
}
\newcommand{\gradient}{\boldsymbol{\nabla}}
\newcommand{\force}{g}
\newcommand{\bc}{\overline{u}}
\newcommand{\bcprojection}{\bc_h}
\newcommand{\rhsbc}[1]{B_h(\bcprojection;#1)}
\newcommand{\cip}{c^{\rm ip}}
\newcommand{\M}{\boldsymbol{\mathsf{M}}}
\newcommand{\G}{\boldsymbol{\mathsf{G}}}
\newcommand{\A}{\boldsymbol{\mathsf{K}}}
\newcommand{\T}{\boldsymbol{\mathsf{T}}}
\newcommand{\B}{\boldsymbol{\mathsf{B}}}
\newcommand{\Mbar}{\boldsymbol{\mathsf{\tilde{M}}}}
\newcommand{\Abar}{\boldsymbol{\mathsf{\tilde{K}}}}
\newcommand{\Bbar}{\boldsymbol{\mathsf{\tilde{B}}}}
\newcommand{\ab}{{ab}}
\newcommand{\Mt}{\boldsymbol{\mathsf{\tilde{M}}}}
\newcommand{\At}{\boldsymbol{\mathsf{\tilde{K}}}}
\newcommand{\Bt}{\boldsymbol{\mathsf{\tilde{B}}}}
\newcommand{\x}{\boldsymbol{x}}
\newcommand{\n}{\boldsymbol{n}}
\newcommand{\rr}{\boldsymbol{r}}
\newcommand{\ru}{\hat{\boldsymbol{r}}}
\newcommand{\der}{{\rm d}}
\newcommand{\unknp}{u_h^{n+1}}
\newcommand{\unkn}{u_h^{n}}
\newcommand{\sym}{{\rm sym}}
\def\Viscop{D}
\begin{document}

\title[Differentiable monotonicity-preserving schemes for dG methods]{Differentiable monotonicity-preserving schemes for discontinuous Galerkin methods on arbitrary meshes}

\author{ Santiago Badia$^{\dag\ddag}$  \and Jes\'{u}s Bonilla$^{\dag\ddag}$ \and Alba Hierro$^{\dag\ddag}$ }
\thanks{
	$^\dag$ Centre Internacional de M\`etodes Num\`erics en Enginyeria (CIMNE)
	\@, Parc
	Mediterrani de la Tecnologia, UPC, Esteve Terradas 5, 08860 Castelldefels,
	Spain (\{sbadia,jbonilla,ahierro\}@cimne.upc.edu). \\ 
	\indent$^\ddag$
	Universitat Polit\`ecnica de Catalunya, Jordi Girona 1-3, Edifici C1, 08034
	Barcelona, Spain.
}

\renewcommand{\thefootnote}{\fnsymbol{footnote}}

%

\renewcommand{\thefootnote}{\arabic{footnote}}

\begin{abstract}

This work is devoted to the design of interior penalty discontinuous
Galerkin (dG) schemes that preserve maximum principles at the
discrete level for the steady transport and convection-diffusion
problems and the respective transient problems with implicit time
integration. Monotonic schemes that combine explicit time stepping
with dG space discretization are very common, but the design of such
schemes for implicit time stepping is rare, and it had only been
attained so far for 1D problems. The proposed scheme is based on \JB{a
piecewise linear dG discretization supplemented with }an
artificial diffusion that linearly depends on a shock detector that
identifies the troublesome areas. In order to define the new shock
detector, we have introduced the concept of \emph{discrete local
	extrema}. The diffusion operator is a graph-Laplacian, instead of
the more common finite element discretization of the Laplacian
operator, which is essential to keep monotonicity on general meshes
and in multi-dimension. The resulting nonlinear stabilization is
non-smooth and nonlinear solvers can fail to converge. As a result, we
propose a smoothed (twice differentiable) version of the nonlinear
stabilization, which allows us to use Newton with line search
nonlinear solvers and dramatically improve nonlinear convergence. A
theoretical numerical analysis of the proposed schemes show that
they satisfy the desired monotonicity properties. Further, the
resulting operator is Lipschitz continuous and there exists at least
one solution of the discrete problem, even in the non-smooth
version. We provide a set of numerical results to support our
findings.
\end{abstract}

\maketitle
\noindent{\bf Keywords:} 
Finite elements, discrete maximum principle, monotonicity, shock capturing, discontinuous Galerkin, local extrema diminishing.

\tableofcontents
\section{Introduction}

The transport problem is one of many problems that might satisfy a
maximum principle (MP) or a positivity property. However, its
numerical discretization may violate these properties at the discrete
level. These violations arise in the form of local spurious
oscillations near sharp layers of the solution. Such oscillations
break the MP of the continuous problem. 
\JB{For steady problems with no source term, the MP implies that the extrema of the solution are on the boundary of the domain}; they
are bounded by the boundary and the initial solution extrema in the
transient case.

\JB{
Many authors have focused on developing accurate schemes that inherit the MP at the discrete level, i.e. discrete maximum principle (DMP) preserving schemes. To this end, several approaches have been used. In the case of explicit time integration combined with finite volumes or discontinuous Galerkin (dG) methods, the schemes are usually based on either slope or flux limiters, or special reconstruction algorithms. These methods are widely present in literature and already well understood (see, e.g., \cite{leveque_book_2002}).
}

\JB{
For implicit time integration and continuous Galerkin (cG)
finite element space discretization, methods attaining DMPs are not as well understood as the previous ones. However, several schemes have been developed to date.
In this case, most of the approaches are based on adding an artificial diffusion operator. Then, in order to maintain high-order convergence rates in smooth regions, this operator is scaled such that it vanishes in smooth regions and it is active in the vicinity of sharp layers. Depending on how this activation is controlled, one may distinguish among residual-based, entropy-based, and fluctuation-based schemes. The first DMP-preserving schemes were residual-based (see e.g. \cite{mizukami_petrov-galerkin_1985,burman_nonlinear_2002}). Afterwards, fluctuation based schemes were developed \cite{burman_edge_2004,burman_stabilized_2005,burman_nonlinear_2007,badia_monotonicity-preserving_2014,burman_monotonicity_2015}. These schemes are based on computing \emph{a priori} an artificial diffusion that ensures DMP preservation. The artificial diffusion is activated based on a so-called shock detector, usually based on the unknown gradient jumps across elements. Lately, Guermond and co-workers have proposed a similar approach for hyperbolic problems, but using an alternative detector based on the entropy production  \cite{guermond_maximum-principle_2014,guermond_second-order_2014}. The more recent fluctuation-based schemes compute the amount of diffusion required to preserve the DMP in a way that resembles Algebraic Flux Correction (AFC) techniques \cite{dmitri_kuzmin_new_2015,barrenechea_2016,barrenechea_analysis_2016, kuzmin_gradient-based_2016,badia_monotonicity-preserving_2016}.
}
\JB{ 
The reader might refer to \cite{kuzmin_algebraic_2005} and the references therein for more insights about AFC.
}

\JB{
In the case at hand, the dG space discretization of steady problems and transient problems, the situation is much less understood. An attempt to develop implicit DMP-preserving dG schemes has been proposed in
\cite{badia_discrete_2015}, but even though a DMP enjoying artificial
diffusion method can be constructed for the 1D problem, the extension
to the multi-dimensional case fails to enjoy such property.}
The objective of this work is \emph{to design a multidimensional
DMP-preserving dG method on arbitrary meshes for both implicit time
integration and steady problems}. Furthermore, {we propose a
linearity preserving and differentiable method. This latter
property} is particularly important for improving the convergence of
the nonlinear solver, as shown in \cite{badia_monotonicity-preserving_2016}. 

\JB{
In order to do so, we propose a stabilization method based on the
following four key ingredients:
\begin{enumerate}
	\item  A \emph{shock detector} that only activates the artificial diffusion in regions around shock. 
	As previously said, a shock detector restricts the application of the 
	stabilization to regions where the solution presents shocks 
	or sharp layers, and is the key ingredient to obtain a high-order
	stabilization method;
	\item The \emph{amount of diffusion}
	added to ensure the DMP. We motivate it using similar ideas behind the AFC low-order scheme construction (see  \cite{kuzmin_flux_2002,kuzmin_algebraic_2005});
	\item The \emph{discrete diffusion operator} in order to keep the DMP on arbitrary meshes. Guermond and co-workers \cite{guermond_second-order_2014,guermond_maximum-principle_2014} have proposed to use graph-theoretic artificial diffusion operators, instead of the classical PDE-based ones. This strategy has already been used in  \cite{dmitri_kuzmin_new_2015,kuzmin_gradient-based_2016,badia_monotonicity-preserving_2016};
	\item For transient problems, a perturbation of the mass matrix is required
	to obtain a local extremum diminishing (LED) scheme.
\end{enumerate}}


This work is structured as follows. In Sect. \ref{sec-problem_paper4},
we introduce the problem to solve, the notation, and the
discretization of the problem in space using the interior penalty dG
method. Then, in Sect. \ref{sec-DMP_paper4}, we state a novel
definition of the DMP property for dG methods, by introducing the
concept of discrete local extrema. In Sect. \ref{sec-dmpproof}, we
propose a scheme that fulfills such property. Lipschitz
continuity and existence of solutions are proved in Sect
\ref{sec-ex}.
A discussion about the importance of smoothing the
computation of the shock capturing terms and some tests to choose the
optimal values of the smoothing parameters are developed in
Sect. \ref{app-smoothing}. Finally, numerical experiments show the
performance of the method in Sect. \ref{sec-numexp4}, and some
conclusions are drawn in Sect. \ref{sec-conclusions}.

\section{The Convection-diffusion problem and its discretization}
\label{sec-problem_paper4}

\def\betab{\boldsymbol{\beta}} We consider a transient
convection-diffusion problem with Dirichlet boundary conditions:
\begin{equation}
\label{eq-strongform4}
\left\{
\begin{array}{rcll}
\partial_t u + \gradient\cdot(\betab u) - \gradient\cdot(\mu\gradient u)  &=& \force & \rm{in}\ \Omega\times[0,T], \\
u(x,t)&=&\bc(x,t) & \rm{on}\ \partial\Omega\times[0,T],\\
u(x,0)&=&u_0(x) & x\in\Omega.
\end{array}\right.
\end{equation}
The domain $\Omega$ is an open, bounded, connected subset of
$\mathbb{R}^d$ with a Lipschitz boundary $\partial\Omega$, where $d$
is the space dimension, \JB{$\betab=\betab(\x)$} is the convective velocity, which is assumed to be divergence-free, and \JB{$\mu\geq 0$} is a constant diffusion. 
Even though we have considered Dirichlet boundary conditions in
the statement of problem \eqref{eq-strongform4}, i.e.,
$\partial\domain\equiv\partial\domain_D$, Neumann boundary
conditions can also be considered straightforwardly. In
the case of pure convection ($\mu=0$), boundary conditions are only
imposed on the inflow boundary
$\partial\Omega^-\doteq\{\x\in\partial\Omega :
\betab\cdot\n_{\partial \Omega}<0\}$, where $\n_{\partial \Omega}$ is
the outward-pointing unit normal. Further, we define the outflow
boundary as
$\partial\Omega^+\doteq\partial\Omega\backslash\partial\Omega^-$. Below,
we also consider the steady case, by eliminating the time derivative
term. 

\subsection{Notation}

Let $\thn = \{K\}$ be a partition of $\overline{\Omega}$ formed by elements $K$
of characteristic length $h_K$.  For quasi-uniform meshes, we can
define a global characteristic length of the mesh $h$. \JB{We
	denote by $\x_i$ the coordinates of vertex $i$ and by $\mathcal{V}_h$ 
	the set of vertices of the partition. We also 
	define $\mathcal{V}_h(K) \doteq \{ i \in \mathcal{V}_h \, : \, \x_i
	\in K \}$.}

Given the mesh $\thn$, the non-empty intersection $F=\partial
K\cap\partial \Kp$ of two neighbor elements $K,\Kp\in\thn$ is called
an interior facet of $\thn$ if it is a subdomain of dimension
$d-1$. The set of all the interior facets is denoted by
$\setintfaces$. On the other hand, the non-empty intersection
$F=\partial K \cap \partial\Omega^-$ of an element $K\in\thn$ on the
boundary with the boundary of the domain is called an inflow boundary
facet (analogously for $\partial \Omega^+$ and outflow boundary
facets). The set of inflow boundary facets is denoted by
$\setfaces^-$, and the set of outflow boundary facets is denoted by
$\setfaces^+$. (We assume that the finite element partition is conforming with the
inflow and outflow boundaries.) The set of all the facets is denoted
by $\setfaces \doteq \setintfaces \cup \setfaces^+ \cup
\setfaces^-$. For any facet $F\in\setintfaces$, we represent with
$K_F^+$ and $K_F^-$ the only two neighbor elements such that $\partial
K_F^+\cap \partial K_F^- = F$. In addition, we call $n_F^+$ and
$n_F^-$ the unitary normal to facet $F$ outside $K_F^+$ and $K_F^-$,
respectively. Given a facet $F$, we can also define the characteristic
facet length $h_F$.

On tetrahedral (or triangular) meshes, the discrete space considered
henceforth is the discontinuous space of piecewise linear functions $
\vh=\{v_h \, : \, v_h|_K \in \mathbb{P}_1(K)$ $\forall K \in \thn\}$,
where $\mathbb{P}_1(K)$ is the space of linear polynomials in
$K$. For hexahedral (or quadrilateral) meshes, $ \vh=\{v_h \, :
\, v_h|_K \in \mathbb{Q}_1(K)$ $\forall K \in \thn\}$, where
$\mathbb{Q}_1(K)$ is the tensor product space of piecewise linear
1D polynomials. In addition, we represent the space of traces of $V_h$ 
\JB{on $\partial\Omega$} as $V_h|_{\partial \Omega}$.

In order to define dG spaces, we use the nodal set as the Cartesian
product of element vertices, i.e., $\mathcal{N}_h = \Pi_{K \in
	\mathcal{T}_h} \mathcal{V}_h(K)$. Thus, every node $a \in
\mathcal{N}_h$ can also be represented as a pair $(i,K)$, with $K \in
\mathcal{T}_h$ and $i \in \mathcal{V}_h(K)$. \JB{Therefore,
	more than one interior node might be placed at the same coordinates. Indeed, if $n$ elements have a vertex on $\x_i$, there will be $n$ nodes at $\x_i$, each one  with its own degree of freedom.}
Given the node $a \in
\mathcal{N}_h$, its coordinates are represented with $\x_a$,
{$\Omega_a \doteq \{ K \in \thn \, : \, \x_a\in K \}$}
is its support, and $\dofsupport(a)= \{ b \in \dofsupport \, : \, \x_b
\in \Omega_a\}$ is the set of nodes \emph{connected} to
$a$. \JB{Notice that $a$ itself is included in $\dofsupport(a)$.} We define the set of boundary nodes $\mathcal{N}^\partial_h \doteq \{ a
\in \mathcal{N}_h \, : \, \x_a \in \partial \Omega \}$, and
$\dofsupport^\partial (a) \doteq \dofsupport(a) \cap
\mathcal{N}_h^\partial$.

The functions $v_h\in\vh$ can be expressed as a linear combination of
the basis $\{\varphi_a\}_{a\in\dofsupport}$, where $\varphi_a$
corresponds to the shape function of node $a$. It is defined as
follows. Given $a \in \mathcal{N}_h$ and its corresponding
vertex-element pair $(i,K)$, we define $\varphi_a$ as the elementwise
(bi)linear function such that $\varphi_a(\x_a)\vert_K = 1$ and
$\varphi_a(\x_b)\vert_K=0$ for $b\neq a$, and
$\varphi_a\vert_{K^\prime} \equiv 0$ for $K^\prime\neq K$. Any
function $v_h\in\vh$ is double-valued on $\setintfaces$ and
single-valued on $\partial \domain$. Thus, $v_h\in\vh$ can be
expressed as $v_h = \sum_{a\in\dofsupport} v_a \shap_a$. Moreover we
consider $v_h^K$ as the restriction of $v_h$ into $K$.

Given $v_h\in \vh$,  we can define the common concepts of average $\mean{\cdot}$ and jump $\jump{\cdot}$ on an interior point $\x$ of a facet $F\in \setintfaces$ as follows: 
$$\mean{v_h}(\x)=
\frac{1}{2}\left(v_h^{K_F^+}(\x)+v_h^{K_F^-}(\x)\right),\quad
\jump{v_h}(\x) = v_h^{K_F^+}(\x)\n_F^++v_h^{K^-_F}(\x)\n_F^-, $$ where
$\n_F^+$ (resp. $\n_F^-$) is the outward normal with respect to $K^+$
(resp. $K^-$) on $F$; we use $\n_F$ on boundary facets and in places
where the sign is not relevant. On boundary facet points $\x\in F$,
$F\subset\partial\Omega$, we define $\mean{v_h}(\x)=v_h^{K_F^+}(\x)$,
$\jump{v_h}(\x) = v_h^{K_F^+}(\x) \n_F^+(\x)$.

We will use standard notation for Sobolev spaces (see, e.g., \cite{brezis_functional_2010}). In particular, the $L^2(\omega)$ scalar product will be denoted by $(\cdot,\cdot)_\omega$ for some $\omega \subset \Omega$, but the domain subscript is omitted for $\omega \equiv \Omega$. The $L^2(\Omega)$ norm is denoted by $\l2norm{\cdot}$. We will denote by $\unityfunction$ the function that is equal to $1$ in $\domain$; $\unityfunction(\x) = 1$ $\forall \x\in\domain$.

\subsection{Weak form and interior penalty dG approximation}\label{sec:weak-form-and-the-interior-penalty-dg-approximation}
The stabilized dG bilinear form for the transport problem proposed in \cite{brezzi_discontinuous_2004} combined with the interior penalty (IP) method for the viscosity term reads as:
\begin{align}\label{eq-dscrtpbm}
{\rm Find}\quad u_h\in \vh \quad \hbox{such that }\quad (\partial_t u_h,v_h) + \spacebilinearform(u_h,v_h) = \rhsg(v_h) + \rhsbc{v_h} \quad \forall v_h\in \vh,
\end{align}
with 
\begin{align}\label{eq-bform4}
\begin{split}
\spacebilinearform(u_h,v_h) \doteq &   \displaystyle\sum\limits_{K \in \thm} \int_K \left(\mu \gradient u_{h} \cdot \gradient v_{h} -  u_h  \betab \cdot \gradient v_h\right)\\
& + \sum\limits_{F\in \setfaces}  \int_{F} \mu \left(-\jump{u_h} \cdot \mean{ \gradient v_h} - \mean{\gradient u_h} \cdot \jump{v_h}  + c^{\rm ip} {h}_F^{-1} \jump{  u_h} \cdot \jump{  v_h}\right)\\
& + \sum\limits_{F\in \setfaces^+ \cup\setfaces^0}
\int_{F}   \mean{\betab u_h} \cdot \jump{v_h} + \sum\limits_{F\in \setintfaces}  \int_{F}  \frac{|\betab\cdot \n_F|}{2}  \jump{u_h} \cdot \jump{v_h}, 
\end{split}
\end{align}
where the right hand side (RHS) includes the terms corresponding to the source 
\begin{align}
\begin{split}
\rhsg(v_h) \doteq & \displaystyle\sum\limits_{K \in \thm} (\force,v_h)_K,
\end{split}
\end{align}
and weak boundary conditions $\rhsbc{v_h}$,  
\begin{align}
\begin{split}
B_h(w_h;v_h) \doteq -\sum\limits_{F\in\setfaces^-}\int_{F} \betab \cdot \n_{\partial \Omega} w_h v_h -\sum\limits_{F\in\setfaces^-\cup\setfaces^+}\int_F \mu w_h \mean{ \gradient v_h} \cdot \n_{\partial \Omega}
+\sum\limits_{F\in \setfaces^-\cup\setfaces^+}\int_F \cip\mu h_F^{-1} w_h v_h. 
\end{split}
\end{align}
The parameter $c^{\rm ip}$ is a constant set to $10$\JB{, as suggested in \cite{badia_discrete_2015}}. The projection
$\bcprojection\in V_h|_{\partial \Omega}$ is the facetwise linear 
polynomial function obtained,
e.g., by the nodal interpolation of \JB{the given Dirichlet boundary data} $\bc$ on the nodes of the boundary
$\setbcneighbours$, i.e., $\bar{u}_h = \sum_{a\in\setbcneighbours}
\shap_a \bc(\x_a)$. When the nodal projector is not well-defined,
other projections that preserve the DMP can be also used, e.g., the
Scott-Zhang projection {\cite{scott_finite_1990}}.  Notice that with
this definition $\bcprojection$ is bounded by the maximum and minimum
values of the function $\bc$. Moreover, the semi-discrete problem
\eqref{eq-dscrtpbm} can be rewritten in algebraic form as
\begin{equation}\label{eq-cmprob}
\M\partial_t u_h + \A u_h = \G + \B \bc_h,
\end{equation}
where $\M_\ab\doteq (\shap_b,\shap_a)$ and $\A_\ab\doteq \spacebilinearform(\shap_b,\shap_a)$, for $a, \, b \in \mathcal{N}_h$, $\G_a\doteq\rhsg(\shap_a)$, for $a \in \mathcal{N}_h$,  and $\B_\ab\doteq B_h({\shap_b};\shap_a)$, for $a \in \mathcal{N}_h, \, b \in \mathcal{N}_h^\partial$. 

\subsection{Implicit time integration}
We consider the time discretization of \eqref{eq-dscrtpbm} using the method of lines. In doing so, we are interested in schemes that ensure the DMP as the discrete solution evolves in time. This kind of methods are also known as local extrema diminishing (LED). In particular, we will use the $\theta$-method, even though the generalization of the following results to other schemes that preserve monotonicity properties is straightforward. We consider a partition of $(0,T]$ into $\ntsteps$ time steps with equal time step length $\Delta t = \frac{T}{\ntsteps}$ in such a way that $t_\tstep = \tstep \Delta t$, $\tstep = 0,\cdots,\ntsteps$. The problem will be solved by computing an approximation of $u$ in each of those time steps $u_h^\tstep \approx u(\cdot,t_\tstep)$. The discretization of \eqref{eq-strongform4} by means of the $\theta$-method reads: Find  $u_h^{\tstep+1}\in \vh$ such that
\begin{equation}\label{eq-defdscrtpbm}
\quad \frac{1}{\Delta t} (u_h^{\tstep+1}-u_h^{\tstep},v_h) + \spacebilinearform(\theta u_h^{\tstep+1} + (1-\theta)u_h^{\tstep},v_h) = \rhsg(v_h) + \rhsbc{v_h} \quad \forall v_h\in \vh.
\end{equation}

We use a projection of the actual initial condition $u$ at $t=0$ as the initial discrete solution $u_h^0$, such that it inherits the DMP. The value of $\theta$ is to be chosen in the interval $[0,1]$. Some common values are $\theta=0$, which leads to the explicit forward Euler scheme, $\theta = 0.5$ for the Crank-Nicolson scheme, and $\theta=1$, leading to the Backward-Euler  (BE) scheme. Each of these methods has different features. In particular, in order to obtain an unconditionally LED scheme, it is necessary to use $\theta=1$. Using BE, the discrete problem in compact form reads
\begin{equation}\label{eq-discgal}
\M\delta_t u_h + \A\unknp = \G + \B\bcprojection,
\end{equation}
where $\delta_t u_h=\Delta t^{-1}(\unknp-\unkn)$. Other choices of $\theta$ lead to  LED schemes under a CFL-like condition. This is specially important in the case of the Crank-Nicolson (CN) method, which is second-order accurate and non-dissipative. The requirements to obtain monotonicity-preserving schemes in these cases are commented in Sect. \ref{sec-dmpproof}.

\section{Monotonicity Properties}
\label{sec-DMP_paper4}
In this section we introduce the desired properties that we want our
discrete problem to fulfill. The use of local extrema in dG is too
restrictive for our purposes. \JB{In dG, due to the existence of jumps, local extrema that do not harm the MP may appear, e.g. a possitive jump between two elements with negative gradients}. Thus, we
consider the concept of \emph{discrete local extrema}, which is
defined on nodes, and means that the nodal value is extremum in the
support of the node. A discrete local extremum is a local extremum, but
not the opposite. We will see later on that this weaker definition is
enough for our purposes.

\begin{definition}[Local Discrete Extremum]\label{def-extrema}
	The function $u_h \in\vh$ has a local discrete maximum (resp. minimum) on a node $a \in \mathcal{N}_h$ if $u_a\geq u_h(\x)$ (resp. $u_a\leq u_h(\x)$) $\forall \x\in \domain_a$, and also $u_a\geq \bc(\x)$ (resp. $u_a\leq \bc(\x)$) $\forall \x\in \partial\domain_a\cap\partial\domain$.
\end{definition} 
Therefore, the DMP can be defined as follows.
\begin{definition}[DMP]\label{def-DMP}
	For steady problems a solution $u_h\in\vh$ satisfies the local DMP if for every $a \in \mathcal{N}_h$, we have:
	\begin{align}
	u_a^{\rm min} \leq u_a \leq u_a^{\rm max}, 
	\quad where\;\;& u_a^{\rm max}\doteq \max \left\{ \max_{b\in\dofsupport (a)\backslash \{a\}}  u_b,  \max_{\x\in \partial\domain_a\cap\partial\domain_D}\bcprojection(\x)\right\},\\
	and \;\;& u_a^{\rm min}\doteq \min \left\{ \min_{b\in\dofsupport (a)\backslash \{a\}} u_b,\, \min_{\x\in \partial\domain_a\cap\partial\domain_D} \bcprojection(\x)\right\},
	\end{align}
	where $\bcprojection\in\vh|_{\partial\Omega}$ is the finite element interpolation of the boundary conditions on the Dirichlet boundary $\partial\domain_D$.	
	In the case of transient problems, $u_a^{\max}$ and $u_a^{\min}$ are defined as
	\begin{align}
	u_a^{\rm max}\doteq \max \left\{ \max_{\dofsupport (a)\backslash \{a\}}  u_b,  \max_{\x\in \partial\domain_a\cap\partial\domain_D\times(0,T]}\bc(\x), \max_{\x\in\domain} u_h^0(\x)\right\} \\
	u_a^{\rm min}\doteq \min \left\{ \min_{\dofsupport (a)\backslash \{a\}} u_b,\, \min_{\x\in \partial\domain_a\cap\partial\domain_D\times(0,T]} \bc(\x),\min_{\x\in\domain} u_h^0(\x) \right\},
	\end{align}
	where $u_h^0\in\vh$ is the finite element projection of the initial condition $u_0$.
\end{definition}
A scheme such that its solutions satisfy the DMP is called DMP-preserving. Instead, for transient problems, we define LED schemes.
\begin{definition}[LED]\label{def-LED}
	A method is called LED if for $g=0$ and any time in $t \in (0,T]$, the solution $u_h(t) \in V_h$ satisfies
	\begin{equation}
	\der_t u_a \leq 0\; \text{if } u_a \text{ is a maximum and } 
	\der_t u_a \geq 0\; \text{if } u_a \text{ is a minimum.} 
	\end{equation}
	For time-discrete methods, the same definition applies, replacing $\der_t$ by the time derivative discrete approximation $\delta_t$.
\end{definition}

Let us assume now that we have system \eqref{eq-cmprob} plus a \JB{Lipschitz continuous nonlinear diffusion  term (\emph{nonlinear stabilization})}:
\begin{equation}\label{eq-absdscrtpbm}
\Mt(u_h,\bcprojection) \partial_t u_h + \At(u_h,\bcprojection) u_h = \G + \Bt(u_h,\bcprojection)\bc_h.
\end{equation}
The superscript, e.g., in $\At$, denotes the fact that the
operator $\At$ is equal to $\A$ plus stabilization terms. 
We have written, e.g.,
$\At(u_h,\bcprojection)$, to explicitly denote the fact that the
entries of the matrix $\At$ are potentially nonlinear with respect to
$u_h$ and $\bcprojection$. Problem \eqref{eq-absdscrtpbm} is LED
under the following requirements.
\begin{theorem}[LED]\label{thm-led}
	The semi-discrete problem \eqref{eq-absdscrtpbm} is LED (as defined in Def. \ref{def-LED}) if $g = 0$ and for every $a\in\dofsupport$ such that $u_a$ is a local extremum, it holds:
	\begin{subequations} \label{eq-ledcond}
		\begin{eqnarray}\label{eq-ledcond-a}
		&\Mt_\ab(u_h,\bcprojection) \doteq \delta_\ab m_a,\, \text{with } m_a>0, \\
		\label{eq-ledcond-b}
		&\At_\ab(u_h,\bcprojection) \leq 0,  {\,\forall \, b \in \mathcal{N}_h \, : \, b \neq a,\; \qquad \Bt_\ab(u_h,\bcprojection) \geq 0, \, \:\;\forall \, b \in \mathcal{N}_h^\partial}, \\ 
		\label{eq-ledcond-c}& \sum_{b\in\dofsupport(a)} \At_\ab(u_h,\bcprojection) - \sum_{b\in\setbcneighbours(a)} \Bt_\ab(u_h,\bcprojection) = 0, 
		\end{eqnarray}
	\end{subequations}
	where $m_a \doteq \int_{\Omega} \shap_a d\Omega$, $\delta_\ab$ is the Kronecker delta. Further, for $g \leq 0$ (resp. $g \geq 0$) in $\domain$ solutions of \eqref{eq-absdscrtpbm} satisfy the DMP property in Def. \ref{def-DMP}. Moreover, the discrete problem \eqref{eq-absdscrtpbm} is positivity-preserving for $g\geq 0$ and $u_0>0$.
\end{theorem}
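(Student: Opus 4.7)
The plan is to extract the scalar equation at node $a$ from \eqref{eq-absdscrtpbm} and use the row-sum condition \eqref{eq-ledcond-c} to rewrite the right-hand side as a sum of differences $(u_b - u_a)$ and $(\bcprojection(\x_b) - u_a)$. Under \eqref{eq-ledcond-a} the $a$-th row of the semi-discrete system becomes
\begin{equation*}
m_a\, \partial_t u_a \;=\; G_a \;-\; \sum_{b \in \dofsupport(a),\, b \neq a} \At_\ab\, (u_b - u_a) \;+\; \sum_{b \in \setbcneighbours(a)} \Bt_\ab\, (\bcprojection(\x_b) - u_a),
\end{equation*}
after replacing $\At_{aa}$ using \eqref{eq-ledcond-c} and collecting terms. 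For $g = 0$ and $u_a$ a local discrete maximum (Def.~\ref{def-extrema}), the sign hypotheses \eqref{eq-ledcond-b} combined with $u_b - u_a \leq 0$ for $b \in \dofsupport(a)$ and $\bcprojection(\x_b) - u_a \leq 0$ for $b \in \setbcneighbours(a)$ make every term on the right non-positive, so $\partial_t u_a \leq 0$; the symmetric argument delivers $\partial_t u_a \geq 0$ at a local minimum, which establishes LED in the sense of Def.~\ref{def-LED}.

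For the transient DMP with $g \leq 0$, I would proceed by a first-violation argument: if $a^\star$ were the first node at which $u_{a^\star}(t^\star) > u_{a^\star}^{\rm max}(t^\star)$ occurred, at $t^\star$ the value $u_{a^\star}$ would strictly dominate all current neighbour values, the boundary data over $(0, t^\star]$, and the initial data, making $a^\star$ a local discrete maximum in the sense of Def.~\ref{def-extrema}. The canonical identity with $G_{a^\star} \leq 0$ would then force $\partial_t u_{a^\star}(t^\star) \leq 0$, contradicting the onset of the violation. The lower bound follows symmetrically for $g \geq 0$, and positivity-preservation is the immediate specialisation $u_0 > 0$, $g \geq 0$, which yields $u_a \geq u_a^{\rm min} \geq \min_\domain u_h^0 > 0$.

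In the steady case the time derivative vanishes and the canonical identity becomes a conservation relation whose right-hand side must equal zero. If the DMP is violated at the node $a^\star$ realising the largest excess over $u_a^{\rm max}$, every summand on the right is non-positive, and a strictly negative boundary contribution at any neighbour with $\Bt_{a^\star b} > 0$ closes the contradiction at once. The main obstacle I expect is the propagation step when $a^\star$ carries no such direct boundary coupling: one must then infer $u_b = u_{a^\star}$ at every interior $b$ with $\At_{a^\star b} < 0$, iterate along the stencil graph, and use connectivity to eventually reach a boundary node with a strictly positive $\Bt$ coupling—a classical strong-maximum-principle ingredient that relies on the graph structure of the proposed graph-Laplacian stabilisation.
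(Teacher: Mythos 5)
Your argument is correct and follows essentially the same route as the paper: both use the zero row-sum condition \eqref{eq-ledcond-c} together with the sign conditions \eqref{eq-ledcond-b} to show $m_a\,\partial_t u_a \leq \G_a \leq 0$ at a local discrete maximum (the paper writes this as an inequality chain, you as an equivalent decomposition into differences $u_b-u_a$ and $\bcprojection(\x_b)-u_a$), and both obtain the transient DMP and positivity by induction over time steps (your ``first-violation'' phrasing is the same contradiction argument the paper uses for positivity). Your final paragraph on the steady case and stencil-graph propagation is not needed for this theorem, which concerns the semi-discrete transient problem \eqref{eq-absdscrtpbm}; that connectivity argument belongs to the paper's separate steady DMP result (Th. \ref{thm-genDMP}).
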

\begin{proof}
	Assume $u_a$ is a discrete maximum. \JB{From the conditions in \eqref{eq-ledcond} and particularizing equation \eqref{eq-absdscrtpbm} to node $a$, we have that} 
	\begin{align}
	\G_a &= m_a \der_t u_a + \sum_{b\in\dofsupport(a)} \At_\ab(u_h,\bcprojection) u_b - \sum_{b\in\setbcneighbours(a)} \Bt_\ab(u_h,\bcprojection) \bc_b \\
	& \geq m_a \der_t u_a + \left(\sum_{b\in\dofsupport(a)} \At_\ab(u_h,\bcprojection) - \sum_{b\in\setbcneighbours(a)}\Bt_\ab(u_h,\bcprojection)\right) u_a 
	= m_a \der_t u_a .
	\end{align}
	Therefore, $\der_t u_a \leq \G_a = 0$. Proceeding analogously for a minimum we can prove that the method is LED. The proof is equivalent for the discrete problem with BE time integration.
	
	Next, we prove positivity. Let us consider that at some time step $m$ the solution becomes negative, and consider the degree of freedom $a$ in which the minimum value is attained. Using the previous result for a minimum at the discrete level, we have that $\delta_t u_a\geq 0$ and thus $u_a^m \geq u_a^{m-1}$. It leads to a contradiction, since $u_a^{m-1}\geq 0$. Hence, the solution must remain positive.
\end{proof}

\begin{corollary}
	\JB{If the problem in \eqref{eq-absdscrtpbm} is discretized in time with BE and it} meets the conditions in Th. \ref{thm-led}, then it leads to solutions that satisfy the local DMP in Def. \ref{def-DMP} at every time $t^n$, for $n=1,...N^t$.
\end{corollary}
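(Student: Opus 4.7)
My plan is to proceed by induction on the time step index $n$, invoking the LED property furnished by Theorem \ref{thm-led} at each step. For the base case $n=0$, the choice of $u_h^0$ as a projection of $u_0$ that preserves nodal bounds immediately gives the DMP, since every nodal value is controlled by $\max_{\x\in\domain} u_h^0(\x)$ (respectively the minimum), and this quantity is one of the three terms entering $u_a^{\max}$ and $u_a^{\min}$ in Def. \ref{def-DMP}.

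For the inductive step I would fix a node $a$ at time $t^n$ and split into two sub-cases. If $u_a^n \leq \max_{b\in\dofsupport(a)\setminus\{a\}} u_b^n$, then the upper DMP bound holds immediately because this maximum appears in $u_a^{\max}$. Otherwise $u_a^n$ strictly exceeds every neighbor value and, for a boundary node, any boundary value at $t^n$ on $\partial\domain_a\cap\partial\domain_D$; hence $u_a^n$ is a local discrete maximum in the sense of Def. \ref{def-extrema}. Applying Theorem \ref{thm-led} in its backward-Euler version (the final sentence of its proof indicates that $\delta_t u_a \leq 0$ plays exactly the role of $\partial_t u_a\leq 0$, using hypotheses \eqref{eq-ledcond-a}--\eqref{eq-ledcond-c} together with $g\leq 0$ or $g=0$), we conclude that $u_a^n \leq u_a^{n-1}$. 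The analogous lower bound is obtained by reversing all inequalities.

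To combine these two sub-cases across time I would reorganise the induction as a global argument: let $a^\star$ be a node where $u_h^n$ attains its global maximum. If $u_{a^\star}^n$ does not exceed $\max_{\x\in\partial\Omega_D}\bc(\x,t^n)$, the DMP bound at $a^\star$ (and hence at every node) is immediate; otherwise $a^\star$ is a local discrete maximum, LED gives $u_{a^\star}^n\leq u_{a^\star}^{n-1}\leq \max_b u_b^{n-1}$, and the induction hypothesis reduces this in turn to $\max_\Omega u_h^0$ or a boundary supremum over $(0,t^{n-1}]$. Iterating across the $n$ time steps shows that the global maximum of $u_h^n$ is controlled by $\max_\Omega u_h^0$ together with the supremum of $\bc$ on $\partial\domain_D\times(0,t^n]$, both of which are contained in $u_a^{\max}$; the same chain with reversed inequalities yields the lower bound.

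The main obstacle is exactly this bookkeeping: the per-node quantities $u_a^{\max}$, $u_a^{\min}$ refer to \emph{current-time} neighbour values, whereas the LED inequality only compares $u_a^n$ with $u_a^{n-1}$. The cleanest way to bridge this gap is the global-maximum reformulation above, which replaces the per-node induction by one on $\max_a u_a^n$ (and $\min_a u_a^n$) and then transfers the resulting global control back into the per-node form required by Def. \ref{def-DMP}. The positivity statement needed when $g\geq 0$ follows from the same argument with the min, exactly as in the last paragraph of the proof of Theorem \ref{thm-led}.
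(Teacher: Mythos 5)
Your argument is correct and takes essentially the same route as the paper: its proof is a two-sentence version of exactly this, namely that by LED a discrete extremum is bounded by the value at the previous time step, and induction then carries the bound back to the initial condition and the boundary data imposed at earlier times. Your case split (extremum versus non-extremum node) and the global-maximum bookkeeping merely make explicit the details the paper leaves implicit.
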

\begin{proof}
	By the LED property we know that a discrete maximum (resp. minimum) will be bounded above (resp. below) by the solution at the previous time step. Proceeding by induction, the solution will be bounded by the initial condition $u_h^0$ and the boundary conditions imposed at any previous time step.
\end{proof}

Following \cite[Th. 1]{codina_discontinuity-capturing_1993}, we can prove that the steady counterpart of problem \eqref{eq-absdscrtpbm} is DMP-preserving.

\begin{theorem}[DMP]\label{thm-genDMP}
	A steady solution of the \JB{semi-}discrete problem \eqref{eq-absdscrtpbm} satisfies the DMP in Def. \ref{def-DMP} if $g = 0$ in $\domain$ and, for every degree of freedom $a\in\dofsupport$ such that $u_a$ is a local discrete extremum, conditions \eqref{eq-ledcond-b}-\eqref{eq-ledcond-c} hold.
\end{theorem}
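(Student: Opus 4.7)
The plan is to mimic the classical discrete maximum principle argument of \cite{codina_discontinuity-capturing_1993} (and to adapt, to the steady setting, the algebraic manipulation already used in the proof of Th.~\ref{thm-led}), arguing by contradiction. Dropping the time derivative in \eqref{eq-absdscrtpbm} and using $g = 0$ (so $\G = 0$), the steady problem reduces to $\At(u_h,\bcprojection)\,u_h = \Bt(u_h,\bcprojection)\,\bcprojection$. Reading off the row indexed by a node $a\in\dofsupport$ and eliminating the diagonal entry by means of the row-sum identity \eqref{eq-ledcond-c} yields the ``telescoped'' form
\begin{equation*}
\sum_{b\in\dofsupport(a)\setminus\{a\}} (-\At_{ab})\,(u_a - u_b) \;+\; \sum_{b\in\setbcneighbours(a)} \Bt_{ab}\,(u_a - \bc_b) \;=\; 0,
\end{equation*}
which is available at every $a$ that is a local discrete extremum (the regime in which the hypotheses \eqref{eq-ledcond-b}--\eqref{eq-ledcond-c} apply). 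By the sign conditions \eqref{eq-ledcond-b}, every coefficient multiplying a difference in this identity is non-negative.

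Next I would exploit this identity to derive the DMP, treating the upper bound (the lower one is symmetric). Suppose for contradiction that some node $c$ satisfies $u_c > u_c^{\max}$, and pick $a^*$ so as to maximize $u_a - u_a^{\max}$ over the violating nodes. By construction $a^*$ is then a \emph{strict} discrete local maximum in the sense of Def.~\ref{def-extrema}: $u_{a^*} > u_{a^*}^{\max}\geq u_b$ for every $b\in\dofsupport(a^*)\setminus\{a^*\}$ and $u_{a^*} > \bcprojection(\x)$ on $\partial\domain_{a^*}\cap\partial\domain_D$. The hypotheses of the theorem therefore apply at $a^*$, and plugging $a^*$ into the telescoped identity all summands are non-negative; each factor $(u_{a^*}-u_b)$ and $(u_{a^*}-\bc_b)$ is strictly positive, so any strictly positive coefficient $-\At_{a^* b}$ or $\Bt_{a^* b}$ contributes a strictly positive term, contradicting the vanishing of the total sum.

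The main obstacle is to rule out the degenerate situation in which every off-diagonal entry $-\At_{a^* b}$ and every relevant $\Bt_{a^* b}$ happens to vanish at $a^*$. I would dispose of it with a standard connectivity argument: the sparsity graph of $\At$ inherits that of $\spacebilinearform$ (the consistent convection/diffusion couplings and the interior penalty/upwind terms connect every node to its stencil neighbours, and boundary nodes to the Dirichlet data through $\Bt$), so a non-trivial stencil always produces at least one strictly positive coefficient. If the strict inequality $u_{a^*}>u_b$ or $u_{a^*}>\bc_b$ fails along every positive coefficient, the corresponding equality $u_b = u_{a^*}$ transfers the strict-local-maximum property to those $b$; iterating along the graph one eventually reaches a node with a strictly smaller Dirichlet value, producing the contradiction. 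Once the upper bound is established, reversing signs gives the lower bound, which completes the plan.
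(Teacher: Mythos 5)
Your proposal is correct and follows essentially the same route as the paper: the ``telescoped'' identity you obtain from the row equation together with the row-sum condition \eqref{eq-ledcond-c} is precisely the paper's observation that $u_a$ is a convex combination of its neighbours and the Dirichlet data, and your connectivity/propagation step matches the paper's induction carrying the extremal value to the boundary. The only differences are cosmetic (a contradiction argument with a maximal violator versus the paper's direct convex-combination formulation), and both versions rest on the same implicit non-degeneracy assumption that the stencil row of $\At$ (and the associated $\Bt$ entries) does not vanish entirely at the extremum.
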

\begin{proof}
	Assume $u_a$ is a discrete maximum, then the steady counterpart of problem \eqref{eq-absdscrtpbm} reads 
	\begin{equation}
	\sum_{b\in\dofsupport(a)} \At_\ab(u_h,\bcprojection)u_b - \sum_{b\in\setbcneighbours(a)} \Bt_\ab(u_h,\bcprojection) \bc_b = 0,
	\end{equation}
	Therefore, $u_a$ can be computed as
	\begin{equation}
	u_a = \frac{\sum_{b\in\setbcneighbours(a)} \Bt_\ab(u_h,\bcprojection) \bc_b - \sum_{b\in\dofsupport(a)\backslash\{a\}}\At_\ab(u_h,\bcprojection)u_b}{\At_{aa}(u_h,\bcprojection)} .
	\end{equation}
	From conditions \eqref{eq-ledcond-b}-\eqref{eq-ledcond-c}, the coefficients 
	that multiply $u_b$ and $\overline{u}_b$ are in $[0,1]$, and the sum of all 
	these coefficients add up to one. Therefore, $u_a$ is a convex
	combination of its neighbors (including boundary conditions
	$\bc_h$). Since $u_a$ is a maximum and a convex combination of its
	neighbors, then $u_b=u_a$ for some $b\in\dofsupport(a)$. Further,
	it can also be proved that $u_a$ is a convex combination of all its
	neighbors \emph{but} $u_b$, and vice versa $u_b$ is a convex combination of
	all its neighbors \emph{but} $u_a$. Hence, by induction, we know that
	extrema at any degree of freedom are bounded by the boundary
	conditions. Thus, the DMP is satisfied.
\end{proof}
\section{The DMP-preserving artificial diffusion scheme} \label{sec-dmpproof}
In the previous section, we have stated the requirements to be
fulfilled by our discrete scheme to be DMP-preserving and LED. In this
section, we build a nonlinear stabilization of the dG formulation
\eqref{eq-dscrtpbm} that satisfies all these conditions. The nonlinear
stabilization will rely on an artificial graph-viscosity term. The
graph-viscosity is supplemented with a shock detector, in order to
obtain higher than linear convergence on smooth regions. Moreover, for
transient methods we make use of the shock detector in order to
perform the mass matrix lumping only where is required, which allows
us to minimize the phase error of the method.

Let us start by defining the graph-viscosity
$\graphviscosityab$. For $a \in \mathcal{N}_h$ and
$b\in\setbcneighbours(a)$ we define
\begin{equation}\label{eq-bgraphviscosity}
\nubc_\ab \doteq \max \{-\alpha_{a} B_h(\shap_b;\shap_a) ,0 \}.
\end{equation}
Clearly, this viscosity is only non-zero when $a \in
\mathcal{N}_h^\partial$. Next, for $a \in \mathcal{N}_h$ and
$b\in\dofsupport(a)$, we define
\begin{equation}\label{eq-graphviscosity}
\graphviscosityab \doteq \left\{
\begin{array}{lc}
\max\{\alpha_{a}\spacebilinearform(\shap_b,\shap_a),0,\alpha_{b}\spacebilinearform(\shap_a,\shap_b)\} &  b\neq a,\\
\sum_{b\in\dofsupport(a)\setminus a} \graphviscosityab +  \sum_{b\in\setbcneighbours(a)} \nubc_\ab & \text{otherwise},
\end{array}
\right.
\end{equation}
where $\alpha_{a}$ is a parameter that enjoys the following property.
\begin{definition}\label{def-shockdetectorproperty}
	Given $a \in \mathcal{N}_h$, we say that $\alpha_a:\vh\longrightarrow \mathbb{R}$ enjoys the \emph{shock detector property} if it is such that $\alpha_a(u_h,\bcprojection)\in[0,1]$ $\forall u_h\in\vh$ and $\alpha_a(u_h,\bcprojection)=1$ if $u_h$ has a local discrete extremum on $\x_a$.
\end{definition}

Next, we design a shock detector that satisfies this property. Given $a \in \mathcal{N}_h$ and $b\in\dofsupport(a)$ with $\x_b\neq \x_a$, we define $\x_\ab^\sym$ as the intersection between $\partial\Omega_a$ and the line that passes through $\x_b$ and $\x_a$, and it is not $\x_b$. Moreover, we define  $\rr_\ab \doteq \x_b - \x_a$, $\rr_\ab^\sym\doteq \x_\ab^\sym - \x_a$, and $u_\ab^\sym \doteq u_h(\x_\ab^\sym)$ (see Fig. \ref{fig-usym}). Further, we denote by $\ru_\ab$ the unit vector of $\rr_\ab$, and by \JB{$h_a$ a characteristic length of $\Omega_a$.} Then, we define the jump and the mean of the unknown gradients as
\begin{equation}
\jump{\gradient u_h}_\ab \doteq \left\{\begin{array}{ll}
\dfrac{u_b - u_a}{h_a} & \text{if} \; \x_a = \x_b, \\
\dfrac{u_b - u_a}{\vert \rr_\ab \vert} + \dfrac{u_\ab^\sym - u_a}{\vert \rr_\ab^\sym \vert} & \text{otherwise},
\end{array}\right.
\end{equation}
\begin{equation}
\mean{\vert \gradient u_h \cdot \ru_\ab \vert}_\ab \doteq \left\{\begin{array}{ll}
\dfrac{|u_b - u_a|}{h_a} & \text{if} \; \x_a = \x_b, \\
\dfrac{1}{2} \left(\dfrac{|u_b-u_a|}{|\rr_\ab|}+\dfrac{|u_\ab^\sym - u_a|}{|\rr_\ab^\sym|}\right) & \text{otherwise}.
\end{array}\right.
\end{equation}

\begin{figure}[ht]
	\centering
	\includegraphics[width=0.25\textwidth]{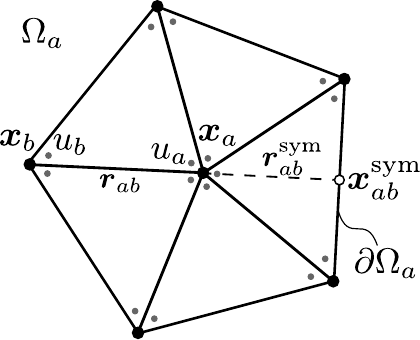}
	\caption{Representation of the symmetric node $\x_\ab^\sym$ of $\x_b$ with respect to $\x_a$.}
	\label{fig-usym}
\end{figure}

\begin{remark}\label{rmk-bcextrapolation}
	These definitions may imply $\x_\ab^\sym=\x_a$ on some boundaries. In these cases, the value at the symmetric point of $\x_b$ with respect to $\x_a$ takes an extrapolation of the boundary condition value such that the method is linearly preserving, i.e., $u_\ab^\sym=\bc_a + (u_b-u_a)\,\sgn{(\bc_a -u_a)(u_b-u_a)} $, and the value $|\rr_\ab^\sym|$ is taken equal to  $|\rr_\ab|$. This extrapolation is not only important for linear preservation, but also for obtaining optimal convergence rates in convection-diffusion problems with boundary layers.
\end{remark}

\begin{remark}
	Notice that when $\x_\ab^\sym$ coincides with a node the value $u_h(\x_\ab^\sym)$ is not unique. In this case, we compute $\jump{\gradient u_h}_\ab$ and $\mean{\vert\gradient u_h\cdot\ru_\ab\vert}_\ab$ for all values of $u_h(\x_\ab^\sym)$ in $\domain_a$.
\end{remark}

Making use of the above definitions, the proposed shock detector reads
\begin{equation}
\label{eq-shockdetector}
\alpha_a(u_h,\bcprojection) \doteq \left\{\begin{array}{cc}
\left(\displaystyle\frac{\left| \sum_{b\in\dofsupport(a)} \jump{\gradient u_h}_\ab\right|}{\sum_{b\in\dofsupport(a)} 2\mean{|\gradient u_h \cdot \ru_\ab|}_\ab}\right)^q & \text{if } \sum_{b\in\dofsupport(a)}\mean{|\gradient u_h \cdot \ru_\ab|}_\ab \neq 0, \\
0 & \text{otherwise}.
\end{array}\right.
\end{equation}
where $q\in\mathbb{R}^+$. Let us prove that the shock detector \eqref{eq-shockdetector} satisfies the shock detector property in Def. \ref{def-shockdetectorproperty}. \JB{We note that the definition of $\alpha_a$ is motivated from \cite{badia_monotonicity-preserving_2014}. Here, instead of using the maximum coefficient obtained, we use the sum all gradient jumps divided by the sum of all gradient means. A similar (more involved) modification can be found in \cite{burman_monotonicity_2015}.}
\begin{lemma}
	The function $\alpha_a(u_h,\bcprojection)$ defined in \eqref{eq-shockdetector} satisfies the shock detector property in Def. \ref{def-shockdetectorproperty}. Furthermore, if $a \in \mathcal{N}_h$ is not an extremum and $q = \infty$, $\alpha_a(u_h,\bcprojection)=0$.
\end{lemma}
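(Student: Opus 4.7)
The plan is to observe that the base of the power in \eqref{eq-shockdetector} is a ratio lying in $[0,1]$ by two nested triangle inequalities, and to identify exactly the configurations that saturate each one. First I would show $\alpha_a\in[0,1]$: for each $b\in\dofsupport(a)$ with $\x_b\neq\x_a$, the triangle inequality applied to the two summands defining $\jump{\gradient u_h}_\ab$ gives
\begin{equation*}
|\jump{\gradient u_h}_\ab|\leq \frac{|u_b-u_a|}{|\rr_\ab|}+\frac{|u_\ab^\sym-u_a|}{|\rr_\ab^\sym|}=2\mean{|\gradient u_h\cdot\ru_\ab|}_\ab,
\end{equation*}
with the analogous identity (as equality) holding when $\x_a=\x_b$. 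Summing over $b$ and applying the triangle inequality once more on the outer sum yields $|\sum_{b}\jump{\gradient u_h}_\ab|\leq \sum_{b}2\mean{|\gradient u_h\cdot\ru_\ab|}_\ab$, so the base of the power in \eqref{eq-shockdetector} lies in $[0,1]$; nonnegativity of $\alpha_a$ is immediate.

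Next I would verify that both triangle inequalities saturate when $u_h$ has a local discrete extremum at $\x_a$. If $u_a$ is a discrete maximum, then $u_b\leq u_a$ for every $b\in\dofsupport(a)$, and since $\x_\ab^\sym\in\Omega_a$ the definition of local discrete maximum forces $u_\ab^\sym=u_h(\x_\ab^\sym)\leq u_a$. In the boundary-extrapolation case of Rem.~\ref{rmk-bcextrapolation}, combining $\bc_a\leq u_a$ and $u_b\leq u_a$ with the formula $u_\ab^\sym=\bc_a+(u_b-u_a)\sgn{(\bc_a-u_a)(u_b-u_a)}$ likewise gives $u_\ab^\sym\leq u_a$. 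Consequently each individual contribution $(u_b-u_a)/|\rr_\ab|$ and $(u_\ab^\sym-u_a)/|\rr_\ab^\sym|$ is nonpositive, so $\jump{\gradient u_h}_\ab\leq 0$ and $|\jump{\gradient u_h}_\ab|=2\mean{|\gradient u_h\cdot\ru_\ab|}_\ab$; since all these jumps share the same sign, the outer triangle inequality is also saturated, yielding $\alpha_a=1^q=1$. The minimum case follows by symmetry.

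Finally, for the limiting case $q=\infty$ with $\x_a$ not an extremum, at least one $b$ contributes a strictly positive term $(u_b-u_a)/|\rr_\ab|$ (or $(u_\ab^\sym-u_a)/|\rr_\ab^\sym|$) and at least one other contributes a strictly negative one, so strict cancellation occurs in at least one of the two triangle inequalities above. The ratio is then a fixed $\rho<1$ independent of $q$, and $\rho^q\to 0$ as $q\to\infty$ delivers the claim.

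The main subtlety I anticipate is the degenerate configuration in which $u_h\equiv u_a$ throughout $\Omega_a$: the denominator vanishes and $\alpha_a=0$ by the definition in \eqref{eq-shockdetector}, even though $u_a$ is trivially both a maximum and a minimum. I would flag this as a harmless exception, because the associated graph-viscosity contributions in \eqref{eq-graphviscosity} are multiplied by differences $u_b-u_a$ and vanish identically in that case; the other step needing careful bookkeeping is the boundary-extrapolation verification from Rem.~\ref{rmk-bcextrapolation}.
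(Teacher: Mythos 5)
Your proof is correct and follows essentially the same argument as the paper's: a local discrete extremum forces all the contributions $(u_b-u_a)/|\rr_\ab|$ and $(u_\ab^\sym-u_a)/|\rr_\ab^\sym|$ to share a sign, so the triangle inequality saturates and the ratio equals one, while a non-extremum produces terms of both signs and hence a ratio strictly below one, killed by $q=\infty$. Your two extra observations---the explicit check of the boundary-extrapolation formula of Remark \ref{rmk-bcextrapolation}, and the degenerate case $u_h\equiv u_a$ on $\Omega_a$ where the definition returns $\alpha_a=0$ even though $\x_a$ is trivially an extremum---are both valid refinements of points the paper's proof passes over in silence.
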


\begin{proof}
	Let us assume that $u_h$ has a discrete maximum (resp. minimum) on $\x_a$, then 
	\begin{equation}\label{eq-sdcond}
	\begin{array}{l}
	u_b-u_a \leq 0\quad \forall b\in\dofsupport(a) \qquad\text{and}\qquad
	u^{\sym}_\ab-u_a \leq 0 \quad \forall b\in\dofsupport(a), \\
	(\text{resp. }u_b-u_a \geq 0\quad \forall b\in\dofsupport(a) \qquad\text{and}\qquad
	u^{\sym}_\ab-u_a \geq 0 \quad \forall b\in\dofsupport(a)).
	\end{array}
	\end{equation}
	Therefore,
	\begin{align}
	\left|\sum_{b\in\dofsupport(a)} \jump{\gradient u_h}_\ab\right| &= 
	\left|\sum_{\substack{b\in\dofsupport(a)\\\x_b=\x_a}} \jump{\gradient u_h}_\ab + 
	\sum_{\substack{b\in\dofsupport(a)\\\x_b\neq\x_a}} \jump{\gradient u_h}_\ab\right| \\
	&=\left|\sum_{\substack{b\in\dofsupport(a)\\\x_b=\x_a}} \dfrac{u_b - u_a}{h_a} + 
	\sum_{\substack{b\in\dofsupport(a)\\\x_b\neq\x_a}} \dfrac{u_b - u_a}{\vert \rr_\ab \vert} + \dfrac{u_\ab^\sym - u_a}{\vert \rr_\ab^\sym \vert}\right|\\
	& = 
	\sum_{\substack{b\in\dofsupport(a)\\\x_b=\x_a}} \dfrac{|u_b - u_a|}{h_a} + \sum_{\substack{b\in\dofsupport(a)\\\x_b\neq\x_a}} \dfrac{|u_b - u_a|}{\vert \rr_\ab \vert} + \dfrac{|u_\ab^\sym - u_a|}{\vert \rr_\ab^\sym \vert} 
	= \sum_{b\in\dofsupport(a)}2\mean{\vert \gradient u_h \cdot \ru_\ab \vert}_\ab .
	\end{align}
	Thus, $\alpha_a(u_h,\bcprojection)=1$. Further, if $u_a$ is not an extremum, then \eqref{eq-sdcond} is no longer true. Hence,
	\begin{align}
	\left|\sum_{b\in\dofsupport(a)} \jump{\gradient u_h}_\ab\right| &= 
	\left|\sum_{\substack{b\in\dofsupport(a)\\\x_b=\x_a}} \jump{\gradient u_h}_\ab + 
	\sum_{\substack{b\in\dofsupport(a)\\\x_b\neq\x_a}} \jump{\gradient u_h}_\ab\right| \\
	&=\left|\sum_{\substack{b\in\dofsupport(a)\\\x_b=\x_a}} \dfrac{u_b - u_a}{h_a} + 
	\sum_{\substack{b\in\dofsupport(a)\\\x_b\neq\x_a}} \dfrac{u_b - u_a}{\vert \rr_\ab \vert} + \dfrac{u_\ab^\sym - u_a}{\vert \rr_\ab^\sym \vert}\right|\\
	& < 
	\sum_{\substack{b\in\dofsupport(a)\\\x_b=\x_a}} \dfrac{|u_b - u_a|}{h_a} + \sum_{\substack{b\in\dofsupport(a)\\\x_b\neq\x_a}} \dfrac{|u_b - u_a|}{\vert \rr_\ab \vert} + \dfrac{|u_\ab^\sym - u_a|}{\vert \rr_\ab^\sym \vert} 
	= \sum_{b\in\dofsupport(a)}2\mean{\vert \gradient u_h \cdot \ru_\ab \vert}_\ab .
	\end{align}
	Therefore, $\alpha_a(u_h,\bcprojection)<1$. Moreover, when $q=\infty$, $\alpha_a(u_h,\bcprojection)=0$ if $u_a$ is not an extremum.
\end{proof}

In order to prove the DMP-preservation in the numerical analysis below we need to perturb both the weak boundary conditions and the bilinear form. The perturbed weak boundary conditions read:
\begin{equation}
\label{eq-cccccb}
\pertwbc(w_h,\overline{w}_h,\bcprojection;v_h) \doteq B_h(\bcprojection;v_h) + \sum_{a\in\dofsupport}\sum_{b\in\setbcneighbours(a)} \nubc_\ab(w_h,\overline{w}_h) v_{a}\bc_{b} .
\end{equation}
where $v_h$, $w_h\in\vh$, and $\overline{w}_h,\bcprojection\in\vh|_{\partial\Omega}$. Furthermore, given $u_h, \, v_h, w_h \in V_h$ and $\overline{w}_h,\in\vh|_{\partial\Omega}$,  we can define the perturbed bilinear form $\pertbform$ as:
\begin{align}
\label{eq-ccccc}
\pertbform(w_h,\overline{w}_h;u_h,v_h) \doteq \spacebilinearform(u_h,v_h)  + \sum_{a\in\dofsupport}\sum_{b\in\dofsupport(a)} \graphviscosityab(w_h,\overline{w}_h)  v_{a}u_{b}\ell(a,b),
\end{align}
where $\ell(a,b)\doteq 2\delta_\ab -1$ is the graph-Laplacian operator.
It leads to the following stabilized steady discrete problem: Find $u_h \in V_h$ with $\bcprojection\in V_h|_{\partial\Omega}$ such that
\begin{equation}\label{eq-steadydscrtpbm} \pertbform(u_h,\bcprojection;u_h,v_h) = \rhsg(v_h) + \pertwbc(u_h,\bcprojection,\bcprojection;v_h) \quad \forall v_h\in \vh.
\end{equation}
We are ready to prove the desired DMP property of this method. For this purpose, we define 
$\Abar_{ab}(u_h,\bcprojection)\doteq \pertbform (u_h,\bcprojection;\shap_b,\shap_a)$ for $a, \, b \in \mathcal{N}_h$, and $\Bbar_{ab}(u_h,\bcprojection)\doteq \pertwbc (u_h,\bcprojection,\shap_b;\shap_a)$ for $a \in \mathcal{N}_h, \, b \in \mathcal{N}_h^\partial$.

\begin{theorem}\label{thm-A_DMP}
	The discrete problem \eqref{eq-steadydscrtpbm} with the stabilized semilinear forms defined in \eqref{eq-cccccb} and \eqref{eq-ccccc} is DMP-preserving for $g=0$.
\end{theorem}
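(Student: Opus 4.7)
The strategy is to apply Theorem \ref{thm-genDMP}, which reduces the claim to verifying the sign conditions \eqref{eq-ledcond-b} and the row-sum identity \eqref{eq-ledcond-c} for the matrices $\Abar(u_h,\bcprojection)$ and $\Bbar(u_h,\bcprojection)$ at every node $a\in\dofsupport$ at which $u_h$ has a local discrete extremum. At any such node, the shock-detector lemma just proved delivers $\alpha_a(u_h,\bcprojection)=1$, and this equality is the engine driving every sign estimate that follows.

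Testing the perturbation structures \eqref{eq-ccccc} and \eqref{eq-cccccb} against shape functions, the nodal relation $\shap_c(\x_d)=\delta_{cd}$ collapses the double sums and yields
\begin{equation*}
\Abar_\ab = \A_\ab + \graphviscosityab\,\ell(a,b), \qquad \Bbar_\ab = \B_\ab + \nubc_\ab .
\end{equation*}
For $b\neq a$ one has $\ell(a,b)=-1$, so $\Abar_\ab = \A_\ab - \graphviscosityab$. Since $\alpha_a=1$, the first argument of the maximum defining $\graphviscosityab$ in \eqref{eq-graphviscosity} is precisely $\A_\ab$, forcing $\graphviscosityab\geq \A_\ab$ and hence $\Abar_\ab\leq 0$. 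Analogously, \eqref{eq-bgraphviscosity} gives $\nubc_\ab\geq -\B_\ab$, so $\Bbar_\ab\geq 0$. This settles \eqref{eq-ledcond-b}.

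For \eqref{eq-ledcond-c}, the diagonal value $\graphvisc_{aa}$ was defined in \eqref{eq-graphviscosity} precisely so that the graph-Laplacian row sum satisfies
\begin{equation*}
\sum_{b\in\dofsupport(a)}\graphviscosityab\,\ell(a,b) = \graphvisc_{aa}-\sum_{b\neq a}\graphviscosityab = \sum_{b\in\setbcneighbours(a)}\nubc_\ab .
\end{equation*}
Thus the artificial-viscosity contributions cancel between the two operators in the row difference, and \eqref{eq-ledcond-c} reduces to the consistency of the unperturbed dG form with constants,
\begin{equation*}
\sum_{b\in\dofsupport(a)}\A_\ab - \sum_{b\in\setbcneighbours(a)}\B_\ab = \spacebilinearform(\unityfunction,\shap_a) - B_h(\unityfunction;\shap_a) = 0,
\end{equation*}
obtained after using that the shape functions sum to unity where required.

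The main obstacle is this last identity. Plugging $u_h\equiv\unityfunction$ into \eqref{eq-bform4}, the bulk diffusion vanishes by $\gradient\unityfunction=0$; the IP contributions on $\setintfaces$ vanish since $\jump{\unityfunction}=0$; the surviving IP contributions on $\setfaces^-\cup\setfaces^+$ match termwise the Nitsche and penalty pieces of $B_h(\unityfunction;\shap_a)$; and the upwind dissipation $\tfrac{|\betab\cdot\n_F|}{2}\jump{\unityfunction}\cdot\jump{\shap_a}$ is zero. For the convective part, elementwise integration by parts together with $\gradient\cdot\betab=0$ gives $-\sum_K\int_K \betab\cdot\gradient\shap_a = -\sum_K\int_{\partial K} (\betab\cdot\n_K)\shap_a$, which cancels the interior and outflow jump contributions $\sum_{F\in\setintfaces\cup\setfaces^+}\int_F \mean{\betab}\cdot\jump{\shap_a}$, leaving only $-\sum_{F\in\setfaces^-}\int_F \betab\cdot\n_F\,\shap_a$. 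This is exactly the convective term in $B_h(\unityfunction;\shap_a)$. With \eqref{eq-ledcond-b} and \eqref{eq-ledcond-c} verified, Theorem \ref{thm-genDMP} delivers the DMP.
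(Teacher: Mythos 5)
Your proposal is correct and follows essentially the same route as the paper: both reduce the claim to the hypotheses of Theorem \ref{thm-genDMP}, obtain the off-diagonal sign conditions from $\alpha_a=1$ at discrete extrema together with the $\max$ construction of $\graphviscosityab$ and $\nubc_\ab$, and obtain the zero row-sum condition from the cancellation of the graph-viscosity contributions (via the definition of $\nu_{aa}$) plus the consistency of $\spacebilinearform-B_h$ with constants, which the paper verifies by the same partition-of-unity and integration-by-parts computation you sketch. The only difference is emphasis: the paper spells out the term-by-term cancellation for the row sum and dismisses the sign conditions as clear, whereas you compress the former and make the latter explicit.
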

\begin{proof}
	\JB{As seen in Th. \ref{thm-genDMP},  the solution is DMP-preserving if conditions \eqref{eq-ledcond-b}-\eqref{eq-ledcond-c} are satisfied. Let us verify these two conditions. }
	Let $\x_a$ be an interior node and assume that $u_h$ has an extremum on $\x_a$. Given the set of all nodes $b\in\dofsupport(a)$ coupled to node $a \in \mathcal{N}_h$, we have:
	\begin{align}
	\JB{	\sum_{ b  \in\dofsupport(a)}  \A_\ab - \B_\ab} =
	&  \sum_{b  \in\dofsupport(a)} \Bigg\{\sum_{K\in\mathcal{T}_h} \int_{K}  (\mu\gradient\shap_b \cdot \gradient\shap_a-\shap_b\betab\cdot\gradient\shap_a)  \\
	& + \sum_{F\in\setfaces}\int_F \mu\left(- \jump{\shap_b} \cdot \mean{\gradient\shap_a}-\mean{\gradient\shap_b} \cdot \jump{\shap_a} + \cip h_F^{-1}\jump{\shap_b}\cdot \jump{\shap_a}\right) \\
	& + \sum_{F\in \setfaces^+\cup\setfaces^0} \int_F  \betab \mean{\shap_b}\cdot \jump{\shap_a} + \sum_{F\in\setfaces^0}  \int_F \frac{|\betab\cdot\n_F|}{2} \cdot \jump{\shap_b}\cdot \jump{\shap_a} \\ 
	& + \sum_{F\in\setfaces^-}\int_F  \betab \cdot \n_F \shap_b\shap_a
	+ \sum_{F\in\setfaces^-\cup\setfaces^+}\int_F \mu \shap_b\mean{\gradient\shap_a}\cdot \n_F \\ 
	& -\sum_{F\in\setfaces^-\cup\setfaces^+}\int_F \cip\mu h_F^{-1}\shap_b\shap_a \Bigg\}. 
	\end{align}
	(We note that $\B_{\ab}$ has only been defined for nodes $b
	\in \mathcal{N}_h^\partial$. Here, we abuse of notation, and
	extend by zero the definition to all nodes, with $\B_{\ab}=0$ when $b
	\not\in \mathcal{N}_h$.)  We use the fact that the shape
	functions are a partition of unity, i.e., $\sum_{b\in
		\mathcal{N}_h(a)} \shap_b$ is equal to one on $\Omega_a$ and
	zero elsewhere. As a result, $\sum_{b\in \mathcal{N}_h(a)}
	\jump{\shap_b} = 0$ on facets $F \subset \Omega_a \setminus
	\partial \Omega_a$, and $\sum_{b\in \mathcal{N}_h(a)} \gradient
	\shap_b = 0$ in any $K \in \mathcal{T}_h$.  On the other hand,
	$\shap_a$ vanishes on any $F \subset \partial \Omega_a
	\setminus \partial \Omega$ by construction. Using these
	properties, we get:
	\begin{align}
	&\sum_{b\in\dofsupport(a)}\sum_{F\in\setfaces^0} \int_F \frac{|\betab\cdot\n_F|}{2} \jump{\shap_b} \cdot \jump{\shap_a} = 0,\\ 
	&\sum_{b\in\dofsupport(a)}\left\{\sum_{F\in\setfaces}\int_F \mu \cip h_F^{-1}\jump{\shap_b}\cdot\jump{\shap_a} -\sum_{F\in\setfaces^-\cup\setfaces^+}\int_F \cip\mu h_F^{-1}\shap_b\shap_a \right\}= 0,\\
	&\sum_{b\in\dofsupport(a)}\left\{\sum_{F\in\setfaces}\int_F  -\mu  \jump{\shap_b}\cdot\mean{\gradient\shap_a} + \sum_{F\in\setfaces^-\cup\setfaces^+}\int_F \mu \shap_b\mean{\gradient\shap_a}\cdot \n_F  \right\}= 0,\\
	&\sum_{b\in\dofsupport(a)}\sum_{F\in\setfaces}\int_F \mu\mean{\gradient\shap_b}\cdot\jump{\shap_a} = 0,
	\quad
	\sum_{b\in\dofsupport(a)}\int_\domain  \mu\gradient\shap_b\cdot\gradient\shap_a = 0,
	\end{align}
	and the following terms can be integrated by parts as
	
	\begin{align}
	& \sum_{b  \in\dofsupport(a)} \left\{ - \sum_{K\in\mathcal{T}_h} \int_{K} \shap_b\betab\cdot\gradient\shap_a
	+ \sum_{F\in \setfaces^+\cup\setfaces^0} \int_F  \betab \mean{\shap_b}\cdot \jump{\shap_a} 
	+ \sum_{F\in\setfaces^-}\int_F  \betab \cdot \n_F \shap_b\shap_a \right\} \\ 
	& = \sum_{b  \in\dofsupport(a)} \left\{ -\sum_{K\in\mathcal{T}_h}\left( 
	\int_{K} \shap_b\betab\cdot\gradient\shap_a
	+ \int_{\partial K}  \betab \cdot \n\, \shap_b \shap_a \right)
	\right\} \\
	& = \sum_{b\in\dofsupport(a)} \sum_{K\in\mathcal{T}_h} \int_K \shap_a\betab\cdot\gradient\shap_b = 0.
	\end{align}
	\JB{Finally, since $\sum_{b\in\dofsupport(a)} \nu_\ab(u_h)v_a v_b \ell(a,b)+\sum_{b\in\setbcneighbours(a)} \nubc_\ab(u_h)v_a v_b=0$} by construction (see \eqref{eq-bgraphviscosity} and \eqref{eq-graphviscosity}), then $\sum_{ b  \in\dofsupport(a)} \Abar_\ab(u_h,\bcprojection) - \Bbar_\ab(u_h,\bcprojection)=0$. Moreover, it is clear that $\Abar_\ab(u_h,\bcprojection)\leq 0$ for any $b\neq a$ and $\Bbar_\ab(u_h,\bcprojection)\geq 0$ in all cases, based on the definition of these operators in \eqref{eq-cccccb}-\eqref{eq-ccccc} and their respective graph-viscosities in \eqref{eq-bgraphviscosity}-\eqref{eq-graphviscosity}. It finishes the proof.
\end{proof}
Thus, by Th. \ref{thm-A_DMP}, we can ensure that the extrema of the solution of \eqref{eq-cmprob}, will be on the boundary of the domain when $g = 0$. 
Let us now define the mass matrix perturbation used in order to obtain a LED scheme. As its name reveals, this property ensures that the value of the discrete maximum and the minimum of a transient problem can be bounded by those in the initial solution $u_0=u(\cdot,t_0)$ and boundary conditions. It has been proved in \cite[Lemma 3.2]{burman_nonlinear_2007} that, if the steady problem, e.g., \eqref{eq-steadydscrtpbm}, enjoys the DMP property, its transient version enjoys the LED property if we replace the mass matrix $(\partial_t u_h,v_h)$ by its lumped version $(\partial_t u_h,v_h)_h$ corresponding to the Gauss-Lobatto sub-integration. The form $(\cdot,\cdot)_h$ is such that $(\partial_t u_h,\shap_a)_h =  \partial_t u_a (\unityfunction,\shap_a)$ for any $a \in \dofsupport$. In fact, as Kuzmin and co-workers have proved in \cite{dmitri_kuzmin_new_2015,kuzmin_gradient-based_2016}, it is enough to lump only the terms associated to the degrees of freedom where $u_h$ has an extremum. Following the same strategy as in \cite{badia_monotonicity-preserving_2016}, we can perform selective lumping using the shock detector. We define:
\begin{align}\label{eq-lumpedmass}
\lumpedbform{u_h,\bcprojection;\partial_t u_h,v_h}\doteq\sum_{a\in\dofsupport} v_a(1-\alpha_a^Q(u_h,\bcprojection))(\partial_t u_h,\shap_a) + \alpha_a^Q(u_h,\bcprojection) \partial_t u_a v_a (\unityfunction,\shap_a).
\end{align}
The exponent $Q>0$ is added in order to minimize the lumping perturbation, which leads to phase error in the discrete solution. In addition, we define $\Mbar_\ab(u_h,\bcprojection)\doteq \lumpedbform{u_h,\bcprojection;\shap_b,\shap_a}$, for $a, \, b \in \mathcal{N}_h$. If one considers the semi-discrete problem in space only, we have: Find $u_h\in \vh $ such that
\begin{equation}\label{eq-transientdscrtpbm}
\lumpedbform{u_h,\bcprojection;\partial_t u_h,v_h} + \pertbform(u_h,\bcprojection;u_h,v_h) = \rhsg(v_h) + \pertwbc(u_h,\bcprojection,\bcprojection;v_h) \quad \forall v_h\in \vh .
\end{equation}
\begin{lemma}\label{lem-LED}
	The scheme \eqref{eq-transientdscrtpbm} with the semilinear forms defined in \eqref{eq-lumpedmass}, \eqref{eq-cccccb}, and \eqref{eq-ccccc} is LED for $g=0$.
\end{lemma}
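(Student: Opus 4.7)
The plan is to invoke Theorem \ref{thm-led} applied to the operators $(\Mbar,\Abar,\Bbar)$ arising from \eqref{eq-transientdscrtpbm}, and check the three abstract conditions \eqref{eq-ledcond-a}--\eqref{eq-ledcond-c} at each node $a \in \dofsupport$ where $u_a$ is a local discrete extremum. Two of the three come essentially for free from Theorem \ref{thm-A_DMP}: that proof already established the off-diagonal sign pattern $\Abar_{ab}\leq 0$ for $b\neq a$ and $\Bbar_{ab}\geq 0$ by combining the IP dG contributions with the graph-viscosities \eqref{eq-bgraphviscosity}--\eqref{eq-graphviscosity}, which are constructed precisely to dominate any positive entries coming from $\spacebilinearform(\shap_b,\shap_a)$ and $B_h(\shap_b;\shap_a)$; the row-sum identity \eqref{eq-ledcond-c} followed from the partition-of-unity cancellations in the IP dG form together with $\sum_b \graphvisc_{ab} + \sum_b \nubc_{ab}=0$ by construction. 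None of these computations depend on the time derivative, so they transfer verbatim to the present setting.

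The step that is specific to this lemma is verifying \eqref{eq-ledcond-a}, namely that $\Mbar$ reduces to the diagonal with strictly positive entries at every extremum. Here I would invoke the shock detector property (Definition \ref{def-shockdetectorproperty}), which the preceding lemma has already established for the choice \eqref{eq-shockdetector}: if $u_a$ is a local discrete extremum at $\x_a$, then $\alpha_a(u_h,\bcprojection)=1$, hence $\alpha_a^Q=1$ for any $Q>0$. Substituting into the definition \eqref{eq-lumpedmass}, the interpolation part drops and one is left with
\begin{equation*}
\Mbar_{ab}(u_h,\bcprojection) = \lumpedbform{u_h,\bcprojection;\shap_b,\shap_a} = \delta_{ab}\,(\unityfunction,\shap_a) = \delta_{ab}\, m_a, \qquad m_a>0,
\end{equation*}
which is exactly \eqref{eq-ledcond-a}. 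Combined with the sign and row-sum conditions above, all hypotheses of Theorem \ref{thm-led} are met at every extremum node, and the LED property follows.

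There is essentially no remaining technical obstacle; the weight of the argument has already been discharged by Theorem \ref{thm-A_DMP} and by the shock-detector lemma. The one conceptual point worth emphasising is the \emph{dual role} played by $\alpha_a$ at an extremum: its saturation to one simultaneously (i) activates the full graph-viscosity, enforcing the monotone sign and zero row-sum on $\Abar$, and (ii) fully lumps the corresponding row of the mass matrix, producing the diagonal structure required in \eqref{eq-ledcond-a}. This coupling is precisely what makes the abstract LED criterion collectively achievable by a single nonlinear stabilisation mechanism.
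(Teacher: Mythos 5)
Your proof is correct and follows essentially the same route as the paper: conditions \eqref{eq-ledcond-b}--\eqref{eq-ledcond-c} are inherited from Theorem \ref{thm-A_DMP}, and condition \eqref{eq-ledcond-a} follows because the shock detector saturates to one at a discrete extremum, so \eqref{eq-lumpedmass} collapses to the lumped diagonal $\delta_{ab} m_a$ with $m_a = (\unityfunction,\shap_a) > 0$. Nothing further is needed.
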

\begin{proof}
	The conditions required on $\Abar(u_h,\bcprojection)$ and
	$\Bbar(u_h,\bcprojection)$ in Th. \ref{thm-led} to obtain a LED
	scheme have already been proved in Th. \ref{thm-A_DMP}. Further, if
	we assume that $u_a$ is an extremum, then
	$\alpha_a(u_h,\bcprojection)=1$ and $\Mbar_\ab(u_h)$ becomes
	$(\unityfunction,\shap_a)=\delta_\ab m_a$ with $m_a=\int_{\Omega}
	\shap_a$. Hence, the definition of the mass matrix in \eqref{eq-lumpedmass} satisfies
	\eqref {eq-ledcond-a}. As a result, we fulfill
	all conditions stated in Th. \ref{thm-led} and thus the scheme is LED.
\end{proof}
Furthermore, the stabilized problem \eqref{eq-transientdscrtpbm} is linearity preserving, i.e. linear solutions are solution of the original IP dG method \eqref{eq-dscrtpbm}.
\begin{lemma}\label{lem-lp}
	The stabilization terms in \eqref{eq-cccccb}, \eqref{eq-ccccc}, and
	\eqref{eq-lumpedmass}, vanish for functions $u \in P_1(\domain)$, i.e.,
	\begin{align}
	& \pertwbc(u,u_{\partial \Omega},\bar{u}_h;v_h) = B_h(\bar{u}_h;v_h), \qquad
	\pertbform(u,u_{\partial \Omega};u_h,v_h) = \spacebilinearform(u_h,v_h), \\
	&\JB{\lumpedbform{u,u_{\partial \Omega}; u_h,v_h} = ( u_h,v_h)}, \qquad \hbox{for any } \, u_h \in V_h.
	\end{align}
\end{lemma}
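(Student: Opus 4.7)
The plan is to reduce all three identities to the single claim that the shock detector $\alpha_a(u,\bcprojection)$ defined in \eqref{eq-shockdetector} vanishes at every node $a \in \dofsupport$ when $u \in P_1(\domain)$. Once this is shown, definitions \eqref{eq-bgraphviscosity} and \eqref{eq-graphviscosity} give $\nubc_\ab = \graphviscosityab = 0$ for every pair, so the stabilization added in \eqref{eq-cccccb} and \eqref{eq-ccccc} vanishes term by term, leaving $B_h(\bcprojection;v_h)$ and $\spacebilinearform(u_h,v_h)$ respectively. Similarly, the lumping factor $\alpha_a^Q$ in \eqref{eq-lumpedmass} is zero, so only the consistent contribution $(\partial_t u_h,\shap_a)$ at each node survives, assembling to the standard $L^2$ form.

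To prove $\alpha_a(u,\bcprojection)=0$, I would show that the numerator $\bigl|\sum_b \jump{\gradient u_h}_\ab\bigr|$ in \eqref{eq-shockdetector} vanishes; then $\alpha_a$ is either zero via the else-clause or zero via the main formula. For each $b\in\dofsupport(a)$ the quantity $\jump{\gradient u_h}_\ab$ is examined case by case. When $\x_b=\x_a$, continuity of a globally linear $u$ forces $u_b=u_a$, so that summand is zero. When $\x_b\neq\x_a$ and $\x_\ab^\sym$ lies in the interior of $\domain$, linearity gives $u_b-u_a=\gradient u \cdot \rr_\ab$, and since $\rr_\ab^\sym$ is antiparallel to $\rr_\ab$, also $u_\ab^\sym - u_a = \gradient u \cdot \rr_\ab^\sym = -(|\rr_\ab^\sym|/|\rr_\ab|)\,\gradient u \cdot \rr_\ab$. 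Dividing each by the respective length and adding yields $\gradient u \cdot \ru_\ab - \gradient u \cdot \ru_\ab = 0$, so this contribution vanishes as well.

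For the boundary configuration in which $\x_\ab^\sym$ collapses to $\x_a$, I would invoke Remark \ref{rmk-bcextrapolation}: the extrapolation formula, together with the convention $|\rr_\ab^\sym|=|\rr_\ab|$, is designed precisely to guarantee linearity preservation. Since a linear $u$ satisfies $u_a=\bc_a$ at boundary nodes, the prescribed extrapolation reduces to $u_\ab^\sym - u_a = -(u_b-u_a)$, so the jump vanishes exactly as in the interior case. Summing $\jump{\gradient u_h}_\ab$ over $b\in\dofsupport(a)$ then gives $\alpha_a(u,\bcprojection)=0$, which implies the three identities stated in the lemma.

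The only subtlety lies in the boundary case: the formula in Remark \ref{rmk-bcextrapolation} features a sign function evaluated at $0$ when $u_a=\bc_a$, but the remark itself fixes the convention so that the resulting extrapolation is linearity-preserving. Adopting that convention closes the argument; the remainder of the proof is straightforward bookkeeping using the definitions \eqref{eq-bgraphviscosity}, \eqref{eq-graphviscosity}, and \eqref{eq-lumpedmass}.
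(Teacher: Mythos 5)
Your proof is correct and follows essentially the same route as the paper's: show $\jump{\gradient u}_\ab = 0$ for every pair when $u$ is globally linear, conclude $\alpha_a \equiv 0$, and hence that all graph-viscosities and the lumping factor vanish. You are in fact more careful than the paper's two-line argument, in particular in spelling out the antiparallel-direction cancellation and in flagging the $\sgn{0}$ convention needed for the boundary extrapolation of Remark \ref{rmk-bcextrapolation} — a point the paper's proof silently skips.
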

\begin{proof}
	If $u$ is linear and continuous, then by definition $\jump{\gradient u}_\ab=0$ $\forall a$ and $b\in\setneighbours(a)$. Hence, for any $a$ we have that $\alpha_a\equiv 0$. Thus, both $\nubc_\ab$ and $\graphviscosityab$ are equal to zero. Thus, all the stabilization terms vanish and we recover the original formulation.
\end{proof}

The results in this section for the BE time discretization can be extended to any $\theta$-method. We refer the reader to the work by Kuzmin and co-workers \cite{kuzmin_flux_2002,kuzmin_algebraic_2005} 
for the proofs of such properties. In particular, $\theta$-methods are positivity-preserving under the CFL-like condition (see \cite[Th. 1]{kuzmin_flux_2002}) 
\begin{align}
\Delta t \leq \min_{a\in \dofsupport} \frac{(\unityfunction,\shap_a)}{(1-\theta)\pertbform(\unknp,\bcprojection^{n+1};\shap_a,\shap_a)}.	
\end{align}
Furthermore, under certain conditions of the matrix and the RHS, it has been proved in \cite[Th. 4]{kuzmin_algebraic_2005} 
that the scheme is not only positivity-preserving but satisfies the DMP. This means that the discrete maximum and the minimum of the solution are bounded by the values of the initial solution and the boundary conditions for any $\theta$-method.

The authors in \cite[Th. 1]{kuzmin_flux_2002} take advantage of the mass lumping properties for all of this proofs, but the lumping only needs to be activated for the degrees of freedom where the discrete solution has an extrema. Thus, the scheme defined in \eqref{eq-defdscrtpbm} together with the definition of $\lumpedbform{\cdot,\cdot;\cdot,\cdot}$ given by \eqref{eq-lumpedmass} leads to a DMP-preserving method under the above CFL-like condition.\\

\section{Lipschitz continuity and existence of solutions}\label{sec-ex}

Let us define the Cartesian product space $\tilde V_h \doteq V_h
\times V_h|_{\partial \Omega}$.  Thus, any function
$\tilde{v} \in \tilde V_h$ can be expressed as $\tilde{v} =
(v,\bar{v})$, where the first component includes the values of the dG
function $v \in V_h$ and the second component the projection of the
Dirichlet values $\bar{v} \in V_h|_{\partial \Omega}$. Analogously, we
can define the set of nodes for $\tilde
V_h$ as $\mathcal{M}_h \doteq \mathcal{N}_h \times
\mathcal{N}_h^\partial \equiv \{ (a_1,0), \, (0,a_2) \, : \, a_1 \in
\mathcal{N}_h, \, a_2 \in \mathcal{N}_h^\partial \}.$ We consider an
extended graph-Laplacian operator over $\tilde V_h \times \tilde V_h$
as follows:
\begin{align}
\tilde{\Viscop}(\tilde u, \tilde v)  & =  \tilde{\Viscop}((u,\bar{u}),(v,\bar{v}))  \doteq \sum_{a \in \mathcal{M}_h} \sum_{b \in \mathcal{M}_h} \tilde \nu_{ab} \ell(a,b) \tilde
u_b \tilde
v_a\\
&   \doteq \sum_{a \in \mathcal{N}_h} \sum_{b \in \mathcal{N}_h} \nu_{ab} \ell(a,b)
u_b v_a
- \sum_{a \in \mathcal{N}_h^\partial} \sum_{b \in \mathcal{N}_h^\partial} \nu_{ab}^\partial
\bar{u}_b {v}_a
- \sum_{a \in \mathcal{N}_h^\partial} \sum_{b \in \mathcal{N}_h^\partial} \nu_{ba}^\partial {u}_b
\bar{v}_a.
\end{align}
\JB{Note that the boundary degrees of freedom are replicated. }
Based on this definition, we implicitly have:
\begin{align}
\begin{array}{lll}
& \tilde \nu_{ab} = \nu_{ab}, \quad &\hbox{if } \, a,\, b \in (\mathcal{N}_h,0), \\
& \tilde \nu_{ab} = \nu_{ab}^\partial, \quad &\hbox{if } \, a \in (\mathcal{N}_h^\partial,0),\, b \in (0,\mathcal{N}_h^\partial), \\
& \tilde \nu_{ab} = \nu_{ba}^\partial, \quad &\hbox{if } \, a \in (0,\mathcal{N}_h^\partial),\, b \in (\mathcal{N}_h^\partial,0), \\
& \tilde \nu_{ab} = 0, \qquad &\hbox{if } \, a,\, b \in (0,\mathcal{N}_h^\partial).
\end{array}
\end{align}
It is easy to check that this operator is symmetric and
positive-semidefinite. In order to show the second property, we use the expression for $\nu_{ab}$ and
$\nu_{ab}^{\partial}$ in \eqref{eq-bgraphviscosity} and \eqref{eq-graphviscosity}, respectively, in order to get $\nu_{ab}, \nu_{ab}^\partial \geq 0$, and
$$\JB{\tilde \nu_{aa} = \nu_{aa} =  \sum_{b \in \mathcal{N}_h(a)\backslash a} \nu_{ab} + \sum_{b \in
		\mathcal{N}_h^\partial (a)} \nu^\partial_{ab} = \sum_{b \in \mathcal{M}_h(a)\backslash a} \tilde \nu_{ab}}.$$
Using the last property \JB{and the definition of $\ell(a,b)$}, we get:
\begin{equation}\label{proplap}
\begin{array}{rl}
2 \tilde{\Viscop}(\tilde u, \tilde v) &= \displaystyle{\JB{ \sum_{a \in \mathcal{M}_h} \sum_{b \in \mathcal{M}_h} \tilde \nu_{ab}\ell(a,b) \tilde v_a ( \tilde u_b - \tilde u_a) 
		+\sum_{a \in \mathcal{M}_h} \sum_{b \in \mathcal{M}_h} \tilde \nu_{ab}\ell(a,b) \tilde u_b ( \tilde v_a - \tilde v_b)}}\\
&=\displaystyle{
	\sum_{a \in \mathcal{M}_h} \sum_{b \in \mathcal{M}_h} \tilde \nu_{ab}\ell(a,b)\JB{ ( \tilde u_b - \tilde u_a)} (\tilde v_a - \tilde v_b)}. 
\end{array}
\end{equation}
Thus, we have $| \tilde u |^2_{\tilde{\Viscop}} \doteq
\tilde{\Viscop}(\tilde u,\tilde u) \geq 0$. Further, we define the
restriction operators $\Viscop(u,v) = \tilde{\Viscop}((u,0),(v,0))$ and
$\Viscop^\partial(\bar{u},\bar{v}) = \tilde{\Viscop}((0,\bar{u}),(0,\bar{v}))$, and their
corresponding semi-norms $| u |_{\Viscop} \doteq \Viscop(u,u)$ and $|
\bar{u} |_{\Viscop^\partial} \doteq \Viscop^\partial(\bar{u},\bar{u})$.

Given the source $g \in V_h'$ and $\bar{u} \in V_h|_{\partial
	\Omega}$, we define the operator $\T: V_h \rightarrow V_h'$ for the
steady problem as:
\begin{equation}\label{eq-proof1}
\arraycolsep=1.4pt\def\arraystretch{1.4}
\begin{array}{rl}
\langle \T(z), v \rangle \doteq \,  & \JB{K_h(z,v) }
- B_h(\bar{u},v) - G_h(v)
+ \JB{\tilde{\Viscop}((z, \bar u),(v,0))}
\\ = & \JB{ K_h(z,v) }
- B_h(\bar{u},v) - G_h(v)
+ \displaystyle\sum_{a \in \mathcal{N}_h} \sum_{b \in \mathcal{N}_h(a)} \nu_{ab} \ell(a,b)
v_a
z_b \\ & \JB{- \displaystyle\sum_{a \in \mathcal{N}_h}  \sum_{b \in \mathcal{N}^\partial_h(a)} \nu_{ab}^\partial
	v_a \bar{u}_b.}
\end{array}
\end{equation}
Clearly, to find $u_h\in V_h$ such that $\T(u_h) = 0$ is equivalent to the stabilized problem \eqref{eq-steadydscrtpbm}.  
For transient problems, given also the previous time step solution $u_h^n \in V_h$, we define the operator $\T^{n+1}: V_h \rightarrow V_h'$ at every time step as
\begin{align}\label{eq-proof1bis}
\langle \T^{n+1}(z), v \rangle \doteq M_h^L(z;z,v) - M_h^L(z;u_h^n,v) + \langle \T(z), v \rangle.
\end{align} System \eqref{eq-transientdscrtpbm} can be stated in compact form as: find $u_h \in V_h$ such that $\T^{n+1}(u_h) = 0$. In the next theorem, we prove that both operators are Lipschitz continuous. We provide a sketch of the proof, since it follows the same lines as in \cite[Th. 6.1]{badia_monotonicity-preserving_2016}.
\begin{theorem}\label{th:lipschitz}
	The nonlinear operators $\T$ and $\T^{n+1}$ are Lipschitz continuous
	in $V_h$ for $q \in \mathbb{N}^+$.
\end{theorem}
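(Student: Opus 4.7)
The plan is to exploit the linearity of almost all ingredients of $\T$ and $\T^{n+1}$ and reduce the Lipschitz analysis to a single nonlinear object, namely the shock detector $\alpha_a$. In $\langle \T(z),v\rangle$ from \eqref{eq-proof1} the terms $A_h(z,v)$, $-B_h(\bar u,v)$ and $-G_h(v)$ are linear in $z$ (or independent of $z$) and therefore trivially Lipschitz; likewise in $\T^{n+1}$ the previous-step datum $u_h^n$ enters linearly. Hence the whole burden falls on the graph-diffusion $D(z,v)$ and, in the transient case, on the lumped-mass contribution $M_h^L(z;z,v)-M_h^L(z;u_h^n,v)$, whose $z$-dependence is concentrated in the nonlinear coefficients $\alpha_a(z)$ and $\alpha_a^Q(z)$.

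For the diffusion operator I would first rewrite each increment in the symmetric form given by \eqref{proplap}, extended to $\tilde V_h$ through $\tilde{\Viscop}$, so that $D(z,v)$ is a sum over edges of the graph of terms of the form $\tilde\nu_{ab}(\tilde z)(\tilde z_a-\tilde z_b)(\tilde v_a-\tilde v_b)$. A $\pm$ splitting in the argument $\tilde z$ yields
$$
\tilde\nu_{ab}(\tilde z_1)(\tilde z_{1,a}-\tilde z_{1,b})-\tilde\nu_{ab}(\tilde z_2)(\tilde z_{2,a}-\tilde z_{2,b})=\tilde\nu_{ab}(\tilde z_1)\bigl[(\tilde z_1-\tilde z_2)_a-(\tilde z_1-\tilde z_2)_b\bigr]+\bigl[\tilde\nu_{ab}(\tilde z_1)-\tilde\nu_{ab}(\tilde z_2)\bigr](\tilde z_{2,a}-\tilde z_{2,b}).
$$
The first summand is controlled by the uniform bound $\tilde\nu_{ab}\leq C$, which follows from $\alpha_a\in[0,1]$ and the boundedness on a fixed mesh of the entries $K_h(\varphi_b,\varphi_a)$ and $B_h(\varphi_b;\varphi_a)$. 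The second summand is the genuinely nonlinear term and demands a Lipschitz estimate on the product $\alpha_a(z)\cdot|z_a-z_b|$.

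Establishing that this product is globally Lipschitz on bounded sets of $\tilde V_h$ is the heart of the proof. I would write $\alpha_a=(N_a/M_a)^q$ with $N_a(z)\doteq\bigl|\sum_{b}\jump{\gradient z}_\ab\bigr|$ and $M_a(z)\doteq\sum_{b}2\mean{|\gradient z\cdot\ru_\ab|}_\ab$, both of which are piecewise-affine, hence globally Lipschitz, functions of the nodal values and satisfy the triangle-inequality constraint $0\leq N_a\leq M_a$. Since $|z_a-z_b|\leq C\,M_a(z)$, it suffices to show that the map $(N,M)\mapsto N^{q}/M^{q-1}$, extended by $0$ at the origin, is Lipschitz on the closed cone $\{0\leq N\leq M\}$. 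On this cone one has $|\partial_N(N^q/M^{q-1})|=q(N/M)^{q-1}\leq q$ and $|\partial_M(N^q/M^{q-1})|=(q-1)(N/M)^q\leq q-1$, so the map is Lipschitz there and Lipschitz-extendable to the apex, and composition with the Lipschitz maps $z\mapsto N_a(z)$, $z\mapsto M_a(z)$ yields the claim. The boundary-extrapolation prescription of Remark~\ref{rmk-bcextrapolation} enters linearly in the components of $\tilde z$ and therefore introduces no additional nonlinearity.

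Exactly the same reasoning then handles $\T^{n+1}$: the lumped-mass perturbation \eqref{eq-lumpedmass} depends on $z$ only through $\alpha_a^Q$, which is Lipschitz by the identical calculation (replace $q$ by $Q$), multiplied by the linear factors $(\varphi_b,\varphi_a)$ or $\delta_\ab(\unityfunction,\varphi_a)$. The overall argument parallels \cite[Th.~6.1]{badia_monotonicity-preserving_2016}, the only novelty being the Cartesian-space treatment of the boundary datum through $\tilde V_h$. The main obstacle I anticipate is precisely the Lipschitz control of $N^q/M^{q-1}$ near the degeneracy $M_a=0$: it is the combination of the triangle-inequality constraint $N_a\leq M_a$ and $q$ being a \emph{positive integer} that keeps both partial derivatives bounded as $M\to 0$, which is why the statement restricts to $q\in\mathbb{N}^+$ and why the smoothed variants of Sect.~\ref{app-smoothing} are needed to cover non-integer exponents.
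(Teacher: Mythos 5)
Your proposal is correct and follows essentially the same route as the paper's (sketched) proof: both isolate the linear terms, split the graph-viscosity increment into a frozen-coefficient part bounded via the uniform entry bound $\nu_{ab}\leq C$ and a viscosity-difference part, and control the latter through the structure of the shock detector as a $q$-th power of a quotient with numerator dominated by denominator. The paper defers exactly this last step to \cite[Th.~6.1]{badia_monotonicity-preserving_2016}; your explicit derivative bound for $(N,M)\mapsto N^{q}/M^{q-1}$ on the cone $\{0\leq N\leq M\}$ is the content of that deferred argument, so you have simply written out what the paper cites.
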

\begin{proof}
	In order to prove the Lipschitz continuity, we proceed as in
	\cite{badia_monotonicity-preserving_2016}. After some manipulation, we get:
	\begin{align}
	| \langle \T(u), w \rangle -
	\langle \T(v), w \rangle | \leq
	& | \JB{K_h}(u-v,v) | + \left| \sum_{a \in \mathcal{N}_h} \sum_{b \in \mathcal{N}_h(a)} \nu_{ab}(v) \ell(a,b)
	w_a (u_b-v_b) \right| \\ & + \left|
	\sum_{a \in \mathcal{N}_h} \sum_{b \in \mathcal{N}_h(a)} (\nu_{ab}(u) - \nu_{ab}(v)) \ell(a,b)
	w_a u_b  \right| \\ & + \left|
	\sum_{a \in \mathcal{N}_h} \sum_{b \in \mathcal{N}^\partial_h(a)} (\nu_{ab}^\partial(u) - \nu_{ab}^\partial (v)) 
	w_a \bar{u}_b  \right|.
	\end{align}
	The first term is linear and continuous (see, e.g.,
	\cite{arnold_unified_2002}). We have to prove Lipschitz continuity for
	the rest of terms. We use the inverse inequalities $\| \nabla
	\varphi_a \|_K \leq C h^{-1} \| \varphi_a\|_K$ and $\| \nabla
	\varphi_a \|_F \leq C h^{-1} \| \varphi_a\|_F$ (see
	\cite{brenner_mathematical_2010}) and the fact that shape functions
	are a partition of unity ($\| \varphi_a\|_K \leq C h^{d/2}$ and $\|
	\varphi_a\|_F \leq C h^{(d-1)/2}$), to get:
	\begin{align}\label{eq-boundvisc}
	\JB{\bar{K}_h}(u_h, \bar{u}_h; \varphi_b, \varphi_a) -
	\bar{B}_h(u_h, \bar{u}_h; \varphi_b, \varphi_a) \leq C q ( h^{d-1}\| \betab
	\|_{L^\infty(\Omega)} + \mu h^{d-2}).
	\end{align}
	The rest of the proof follows the same lines as in
	\cite[Th. 6.1]{badia_monotonicity-preserving_2016} and is not included for
	the sake of conciseness. The graph-Laplacian edges for pairs $(a,b)$
	such that $\x_a \neq \x_b$ are as
	in \cite{badia_monotonicity-preserving_2016},
	using \eqref{eq-boundvisc}. The case $\x_a = \x_b$ is simpler.
	
	Lipschitz continuity for the transient problem is a consequence of the
	Lipschitz continuity of $\T$ and of the mass matrix with the
	selective mass lumping. The last property can be proved using again
	the analysis in \cite[Th. 6.1]{badia_monotonicity-preserving_2016}.
\end{proof}

Next, we show that the proposed schemes have at least one
solution. Uniqueness results could also be obtained for the
diffusion-dominated regime following the ideas in
\cite{barrenechea_2016}. \JB{In the following, we will use $C$ as a general constant that can take different values at different appearances.}

\begin{theorem}
	There is at least one solution $u_h \in V_h$ of the steady problem $\T(u_h) = 0$, and one  solution of every time step of the transient problem, i.e., $\T^{n+1}(u_h) = 0$.
\end{theorem}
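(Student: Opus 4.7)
The plan is to apply the classical finite-dimensional corollary of Brouwer's fixed point theorem: if $\T : V_h \to V_h'$ is continuous on a finite-dimensional Hilbert space and there exists $R>0$ such that $\langle \T(v),v\rangle \geq 0$ for every $v \in V_h$ with $\|v\|_{V_h} = R$, then there is $u_h^\star \in V_h$ with $\|u_h^\star\|_{V_h} \leq R$ and $\T(u_h^\star) = 0$. Continuity is already in hand: it is precisely Theorem \ref{th:lipschitz}. So the entire argument reduces to establishing a coercivity-type bound of the form $\langle \T(v),v\rangle \geq \alpha \Phi(\|v\|) - C(1+\|\bar u_h\|+\|g\|)\|v\|$, with $\Phi(r)/r \to \infty$ as $r \to \infty$, separately for the steady and transient operators.

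For the steady operator, I would decompose
\begin{equation*}
\langle \T(v),v\rangle = A_h(v,v) + \Viscop(v,v) + \Viscop^\partial(\bar u_h, v) - B_h(\bar u_h,v) - G_h(v).
\end{equation*}
Standard interior-penalty dG analysis (see, e.g., \cite{arnold_unified_2002,brezzi_discontinuous_2004}) yields $A_h(v,v) \geq \alpha \|v\|_{\rm dG}^2$ for $c^{\rm ip}$ large enough (the value $10$ used here being in the admissible range on the meshes considered), where $\|\cdot\|_{\rm dG}$ is the usual broken $H^1$ norm augmented with the jump and inflow-boundary contributions; in the pure convection case one instead gets control of $\|v\|_{L^2}$ on the inflow boundary and of the facet jumps weighted by $|\betab\cdot\n|$, which combined with $\Viscop(v,v)$ still yields equivalence to $\|v\|_{V_h}^2$ on the finite-dimensional space $V_h$. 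The key observation for the graph term is that by identity \eqref{proplap} together with $\nu_{ab},\nu_{ab}^\partial \geq 0$ and $\ell(a,b) = 2\delta_{ab}-1$, one rewrites
\begin{equation*}
2\Viscop(v,v) = \sum_{a\in\mathcal{N}_h}\sum_{b\in\mathcal{N}_h(a)} \nu_{ab} (v_a - v_b)^2 \geq 0,
\end{equation*}
so $\Viscop$ makes a non-negative contribution, and the cross term $\Viscop^\partial(\bar u_h,v)$ can be absorbed by Young's inequality into $\Viscop(v,v)$ plus a term in $\|\bar u_h\|$. The linear functionals $B_h(\bar u_h,\cdot)$ and $G_h$ are bounded by $C(\|\bar u_h\|+\|g\|)\|v\|_{\rm dG}$ by continuity of the dG forms. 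Combining these estimates gives the required coercivity on a sphere of sufficiently large radius, so that Brouwer applies and a zero of $\T$ exists.

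For the transient operator $\T^{n+1}$, the same template applies, with the additional terms $M_h^L(z;z,z) - M_h^L(z;u_h^n,z)$ taken into account. Because $\lumpedbform{u_h,\bar u_h;\cdot,\cdot}$ is a convex combination at each node $a$ of the consistent entry $(\varphi_b,\varphi_a)$ and the (positive) lumped entry $\delta_{ab}(\unityfunction,\varphi_a)$, the associated matrix $\Mbar(z,\bar u_h)$ is symmetric positive definite uniformly in $z$, so $M_h^L(z;z,z) \geq c_M \|z\|_{L^2}^2$ with $c_M > 0$ independent of $z$; meanwhile $|M_h^L(z;u_h^n,z)| \leq C \|u_h^n\|\,\|z\|_{L^2}$ by Cauchy--Schwarz. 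Adding these to the steady lower bound preserves the structure $\langle \T^{n+1}(v),v\rangle \geq \alpha' \|v\|_{V_h}^2 - C(1+\|\bar u_h\|+\|g\|+\|u_h^n\|)\|v\|_{V_h}$, so Brouwer again produces a zero, completing the induction in time step by step.

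The main obstacle I anticipate is the coercivity of $A_h$ in the pure convection regime $\mu=0$: the bilinear form is then only semi-coercive on the continuous level, controlling only facet jumps and the inflow trace. Here one has to use the fact that $V_h$ is finite-dimensional, so norm equivalence together with the strictly positive contribution of $\Viscop(v,v)$ on non-constant discontinuous modes and of the inflow boundary penalization on constant boundary-non-vanishing modes yields a genuine lower bound of the form $\alpha\|v\|_{V_h}^2$. Verifying this requires a careful kernel analysis of $A_h + \Viscop$ on $V_h$, but once non-triviality of the associated quadratic form on $V_h\setminus\{0\}$ is established the remainder of the argument is routine.
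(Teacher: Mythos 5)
Your proposal follows essentially the same route as the paper's proof: the finite-dimensional Brouwer corollary applied to $\T$ and $\T^{n+1}$, with continuity supplied by Theorem \ref{th:lipschitz}, coercivity of $A_h$ from standard interior-penalty/upwind dG stability, non-negativity of the graph-viscosity contribution via the identity \eqref{proplap} with the boundary cross term absorbed by Cauchy--Schwarz/Young, and the (selectively lumped) mass matrix adding a positive $L^2$ term for the transient step. The one place you go beyond the paper is in flagging the semi-coercivity of $A_h$ when $\mu=0$; that concern is legitimate (note that $\nu_{ab}(v)$ vanishes where the shock detector is inactive, so $\Viscop(v,v)$ is not strictly positive on all non-constant modes), but the paper itself does not resolve it either, deferring instead to \cite{barrenechea_2016}.
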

\begin{proof}
	In order to prove existence of solutions, we rely on the approach in
	\cite{barrenechea_2016}, based on fixed point arguments. First, we combine the stability analysis in \cite{brezzi_discontinuous_2004}
	(for first-order hyperbolic problems) with the stability analysis for
	the interior penalty discretization of the Laplacian operator (see,
	e.g., \cite{arnold_unified_2002} for details), getting:
	\begin{align}\label{eq-proof2} \JB{K_h(z,z)} \geq C \| z\|^2_h, \quad \hbox{with } \, \| z\|^2_h \doteq \sum_{K \in \mathcal{T}_h} \mu | z
	|^2_{H^1(K)} + \sum_{F\in \JB{\mathcal{E}_h}}\left( \mu c^{\rm ip} h_F^{-1} \|
	\jump{z} \|_{L^2(F)}^2 + \| c_{\beta,F}^{\frac{1}{2}} \jump{z} \|^2_{L^2(F)}\right),
	\end{align} with $c^{\rm ip}$ big enough, and $c_{\beta,F}(\x) \doteq | \beta(\x) \cdot
	\boldsymbol{n}_F(\x)|$. On the other hand, using standard dG arguments (see \cite{arnold_unified_2002} and \cite[Prop. 3.55]{ern_theory_2004}), we
	have:
	\begin{align}\label{eq-proof3}  B_h(\bar{u},z) \leq C c^{-1} \| c_e^{\frac{1}{2}} \bar{u}
	\|^2_{L^2(\partial \Omega^-)} + C c^{-1} h^{-1} \mu c^{\rm ip} \|
	\bar{u} \|^2_{L^2(\partial \Omega)} + c \| z \|_h^2,
	\end{align}
	for $c$ arbitrarily small.
	
	We note that the nonlinear stabilization terms can be written in
	terms of the extended graph-Laplacian operator as:
	$$ \sum_{a \in \mathcal{N}_h} \sum_{b \in \mathcal{N}_h(a)} \nu_{ab} \ell(a,b) v_a z_b -
	\sum_{a \in \mathcal{N}_h} \sum_{b \in \mathcal{N}^\partial_h(a)} \nu_{ab}^\partial v_a
	\bar{u}_b = \tilde \Viscop ((z,\bar{u}),(v,0)).
	$$
	Taking $v = z$ and using \eqref{proplap} and the Cauchy-Schwarz inequality, we get:
	$$
	\tilde{\Viscop} ( (z,\bar{u}) , (z,0) ) \leq \JB{\frac{3}{2}} | z |_{\Viscop}^2  + \half | \bar{u}|^{\JB{2}}_{\Viscop^\partial}.
	$$
	Combining \eqref{eq-proof1}, \eqref{eq-proof2}, and \eqref{eq-proof3} (with $c$ small enough), we finally obtain:
	\begin{align}\label{eq-proof4}\JB{ C \langle \T (z), z \rangle} \geq \| z\|^2_h + | z |_{\Viscop}^2 \JB{+ |
		\bar{u}|^2_{\Viscop^\partial}} - \| c_e^{\frac{1}{2}} \bar{u}
	\|^2_{L^2(\partial \Omega^-)} - h^{-1} \mu c^{\rm ip} \| \bar{u}
	\|^2_{L^2(\partial \Omega)}.
	\end{align} 
	We can readily pick a $z \in V_h$ such that $\langle \T(z), z
	\rangle > 0$. Using the Brower's fixed point theorem, there exists $u_h \in V_h$ such that $\T(u_h) = 0$, and thus, solves the steady version of \eqref{eq-transientdscrtpbm} (see \cite{barrenechea_2016} for details). Existence is straightforward for the transient problem, combining the previous results with the coercivity of the mass matrix operator.
\end{proof}

\JB{
	\begin{remark}
		As a result of the previous theorem and Lemma \ref{lem-lp}, the method is linearly preserving and Lipschitz continuous. Using the ideas in \cite[Theorem 4]{barrenechea_2016}, one could prove optimal convergence in diffusion-dominated regimes.
	\end{remark}}


\section{Smoothing the shock detector}
\label{app-smoothing}

In Sect. \ref{sec-dmpproof}, we have defined $\nubc_\ab(u_h,\bcprojection)$, $\graphviscosityab(u_h,\bcprojection)$, and $\alpha_a(u_h,\bcprojection)$ in \eqref{eq-bgraphviscosity}, \eqref{eq-graphviscosity}, and \eqref{eq-shockdetector}, respectively, using non-smooth functions. The problem of using this raw definitions is that, since they are not smooth, it is difficult for the nonlinear solvers to converge. Thus, following the ideas in \cite{badia_monotonicity-preserving_2016}, we add some parameters ($\smoothalpha$,$\smoothquot$,$\smoothperturb$) and regularize the definition of non-smooth functions such as the absolute value and the maximum. In this section we will proceed to unfold all the smooth definitions to facilitate the reproducibility of the method. The resulting formulation is not only Lipschitz continuous but twice differentiable by construction. Furthermore, the smoothing involves slightly more diffusion, and it is easy to check that we keep the DMP and LED properties above. Linearity-preservation is only satisfied weakly (see \cite[Remark 7.3]{badia_monotonicity-preserving_2016}). We do not prove these results for the sake of conciseness, since the proofs are similar to the ones in \cite[Lemma 7.1]{badia_monotonicity-preserving_2016}.

We will start by introducing a couple of smoothed versions of the absolute value:
\begin{equation}
\greatabs{x} =  \sqrt{x^2 + \smoothalpha},\qquad
\lowabs{x} =  \frac{x^2}{\sqrt{x^2 + \smoothalpha}}.
\end{equation}
The value of $\smoothalpha$ is assumed to be small and is going to be specified in Sect. \ref{sec-numexp4}. For values of $x\gg \smoothalpha$, we have $\greatabs{x} \approx |x| \approx \lowabs{x}$ but always $\lowabs{x} \leq |x| \leq \greatabs{x}$. Now, we can redefine $\mean{|\gradient u_h\cdot\ru_\ab|}_\ab$ as:
\begin{equation}
\mean{\lowabs{\gradient u_h \cdot \ru_\ab} }_\ab \doteq \left\{\begin{array}{ll}
\dfrac{\lowabs{u_b - u_a}}{h_a} & \text{if} \; \x_a = \x_b, \\
\dfrac{1}{2} \left(\dfrac{\lowabs{u_b-u_a}}{|\rr_\ab|}+\dfrac{\lowabs{u_\ab^\sym - u_a}}{|\rr_\ab^\sym|}\right) & \text{otherwise}.
\end{array}\right.
\end{equation}
The quotient associated to $\alpha_a$ would read:
\begin{align}
\quotient_a= \frac{\greatabs{ \sum_{b\in\dofsupport(a)} \jump{\gradient u_h}_\ab} + \smoothquot}{\sum_{b\in\dofsupport(a)} 2\mean{\lowabs{\gradient u_h \cdot \ru_\ab}}_\ab + \smoothquot}.
\end{align}
Here $\smoothquot$ is another extra stability parameter added to ensure differentiability of $\quotient_a$ for values of $u_h$ such that the denominator is nullified. By the definition and the properties of $\greatabs{\cdot}$ and $\lowabs{\cdot}$, it is easy to prove that in the case that $u_h$ has a local discrete extremum on $a$, $\quotient_a>1$. So, since we want $\alpha_a$ to enjoy the shock detector property stated in Def. \ref{def-shockdetectorproperty}, we need to construct a twice differentiable function $\smoothquotient$ such that $\smoothquotient(x)=1$ when $x\geq 1$. To this end, we define
\begin{equation}
\smoothquotient (x) = \left\{\begin{array}{ll}
2x^4-5x^3+3x^2+x & x<1,\\
1 & x\geq 1.
\end{array}\right.
\end{equation}
Now we are able to define the smooth value of $\alpha_a$ as 
$
\tilde{\alpha}_a\doteq\left(\smoothquotient(\quotient_a)\right)^q.
$
Moreover, we have also modified the computation of the maximum in the following way:
\begin{align}
\smax (x,y) = \half\sqrt{(x-y)^2+\smoothperturb} +\half(x+y).
\end{align}
Furthermore, at boundaries $u_\ab^\sym$ is computed using the sign function which needs to be regularized too. In particular we use $\ssgn{x}\doteq x/\greatabs{x}$. Then the smooth definition of $\graphviscosityab$ in \eqref{eq-graphviscosity} for $a\in\setneighbours$ and $b\in\setneighbours(a)\setminus \{a\}$ will read
\begin{align}
&\tilde{\graphvisc}_{ab} = \smax\left(0,\smax\left(\tilde{\alpha}_{a}\spacebilinearform\left(\shap_b,\shap_a\right),\tilde{\alpha}_{b}\spacebilinearform\left(\shap_a,\shap_b\right)\right)\right) , 
\end{align}
and for $b\in\setbcneighbours(a)$
\begin{equation}
\nubc_\ab \doteq \smax (-\tilde{\alpha}_{a} B_h(\shap_b;\shap_a) ,0 ).
\end{equation}
The objective of these modifications is twofold. On the one hand, they smooth the function improving the convergence of the nonlinear iterations. On the other hand, they make the method differentiable with respect to $u_h$, and the Jacobian matrix is defined everywhere; some nonlinear iteration methods, such as Newton's method, which need to compute the Jacobian matrix of the problem, can be used. Further, the method is twice differentiable which is required to get quadratic nonlinear convergence rates with Newton's method.

In order to keep a dimensionally correct method and, at the same time, do not affect the convergence of the non-stabilized method, the parameters should scale as follows:
\begin{align}
\smoothperturb = \sigma |\betab|^2 L^{2(d-3)} h^4,
& & \smoothalpha = \tau h^2 L^{-4}, & & \smoothquot = \gamma L^{-1},
\end{align}
where $d$ is the space dimension of the problem, $L$ a characteristic length of the problem, $\tau$ and $\gamma$ have the same dimension as the unknown, and $\sigma$ is dimensionless.

\subsection{Parameters fine-tuning}

In order to find the appropriate values for all the parameters introduced before, we will check how these values affect the performance of the method. To this end, we will consider the steady ($\partial_t u = 0$) transport ($\mu= 0$) problem with no force ($\force=0$) and rotational convection $\betab=(y,-x)$:
\begin{equation}\label{eq-tuningprb}
\gradient\cdot(\betab u) = 0\qquad\qquad \rm{in}\ [0,1]\times[0,1].
\end{equation}
In the transport case, the Dirichlet boundary conditions are only imposed on the inflow boundaries, which, for this convection field, are the sides of the square $[0,1]\times [0,1]$ corresponding to $x=0$ and $y=1$. We will impose $0$ all along the side $y=1$ and the following function on the side $x=0$:
\begin{align}
\bc(0,y) = \left\{\begin{array}{ll}
1 & y\in[0.15,0.45],\\
\cos^2\left(\frac{10}{3}\pi(y-0.4)\right) & y\in[0.55,0.85],\\
0 & {\rm elsewhere}.
\end{array} \right.
\end{align}
We know that the exact solution of this problem consists of a translation of this function in the direction of the convection in such a way that on the outflow boundary corresponding to $y=0$ the solution is 
$
u(x,0) = \bc(0,x).
$
We solve this problem in a $100\times 100\;Q_1$ mesh and check the effect of the constants $\smoothperturbc$, $\smoothalphac$ and $\smoothquotc$ on the resulting outflow profile with respect to the value in the inflow boundary $x=0$, plotted in \Fig{nodallyexactkuzmin}.
\begin{figure}
	\centering
	\subfigure[Solution on inflow boundary $x=0$.]{\includegraphics[clip=true, trim = 0cm 0cm 0cm 0cm, width=7cm]{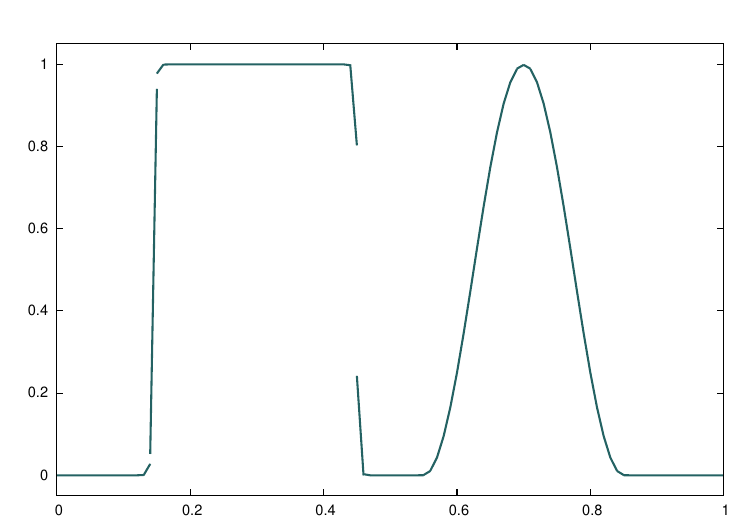}\label{fig-nodallyexactkuzmin}}%
	\subfigure[Solution on the whole domain]{\includegraphics[clip=true, trim = 10cm 4cm 2cm 0cm, width=7cm]{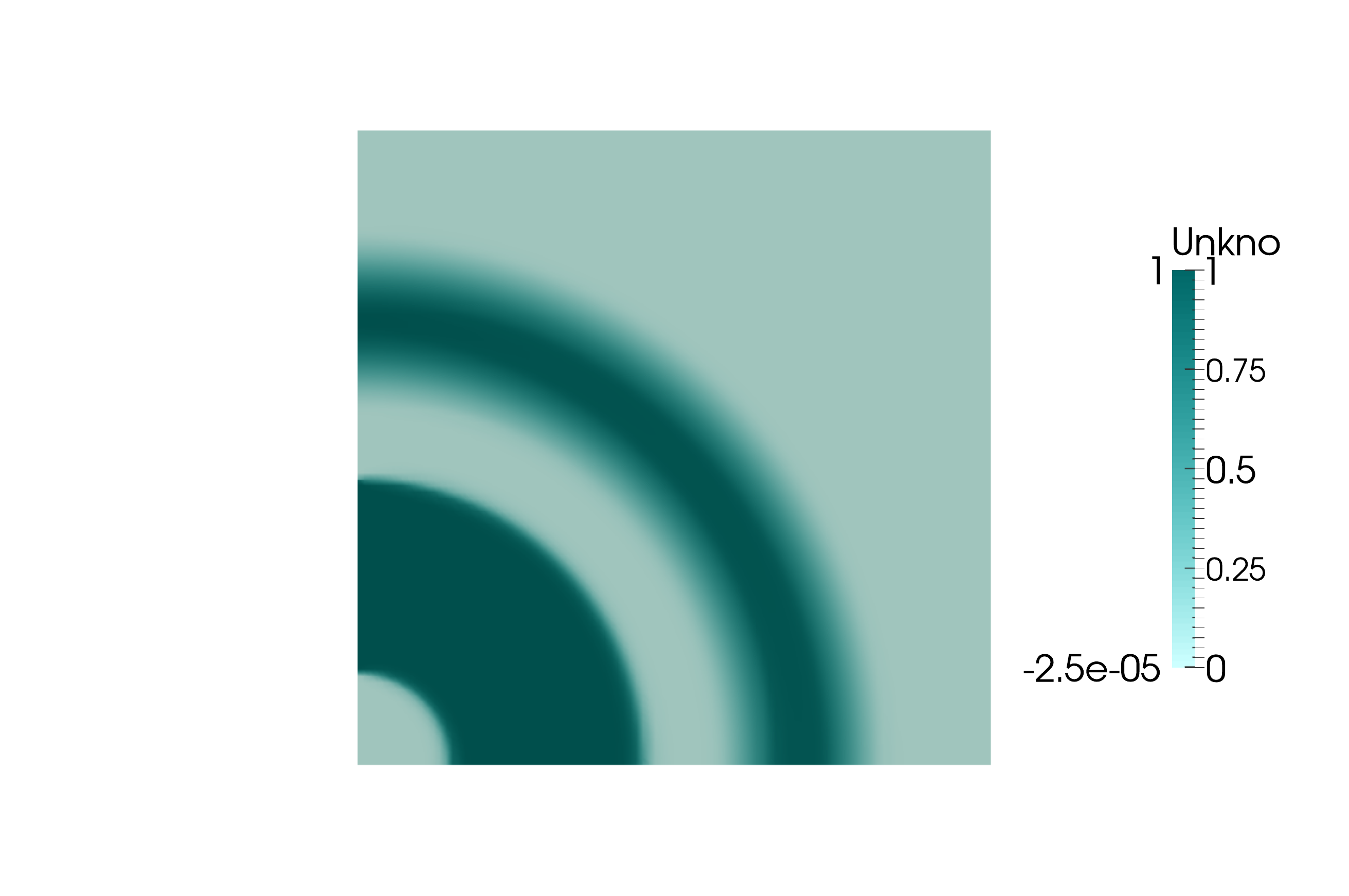}}
	\caption{Solution of the problem \eqref{eq-tuningprb} used for parameter-tuning after $100$ iterations with $q=10$, and $\smoothperturbc=\smoothalphac=\smoothquotc=0$. A $100\times 100\,Q_1$ mesh has been used. }\label{fig-kuxmin_outflow}
\end{figure} 

First of all, we can observe the dissipative effect of the parameters on the final solution. We set values of $q=\{1,2,4,10\}$, $\sigma=\{10^{-1},10^{-2},10^{-3},10^{-4}\}$, and $\tau=\sigma^2$, and fix the value of $\gamma$ to $10^{-2}$. We use Picard linearization and the nonlinear iterative scheme with the relaxation parameters proposed in \cite{john_spurious_2008}, using the same parameter values therein. In addition, we also solve all tests using a hybrid Newton-Picard method; first we use Picard to get a better starting point for Newton, particularly when the nonlinear error is lower than $10^{-2}$ we change to Newton method with line search. Note that for the hybrid scheme the total number of iterations used for comparison also include the first iterations performed with Picard method. For both nonlinear solvers the tolerance is set to be $10^{-4}$ and we allow a maximum of $500$ iterations. Whenever the solver exceeds 500 iterations we define the scheme to be not converged (NC). For both schemes the linearized system of equations is solved with a direct solver. The results are shown in \Fig{setq-newton}. It can be observed that, in order to obtain sharp solutions, it is important to use both high values of $q$ and low values of $\sigma$ and $\tau$. Nevertheless fixing $q=10$, and tuning only $\sigma$ and $\tau$, we can either obtain a method that is easy to converge, but quite dissipative, or a method that is harder to converge, but much more accurate.

\begin{figure}
	\centering
	\subfigure{\includegraphics[clip=true, trim = 0cm 0cm 0cm 0cm, width=3.5cm]{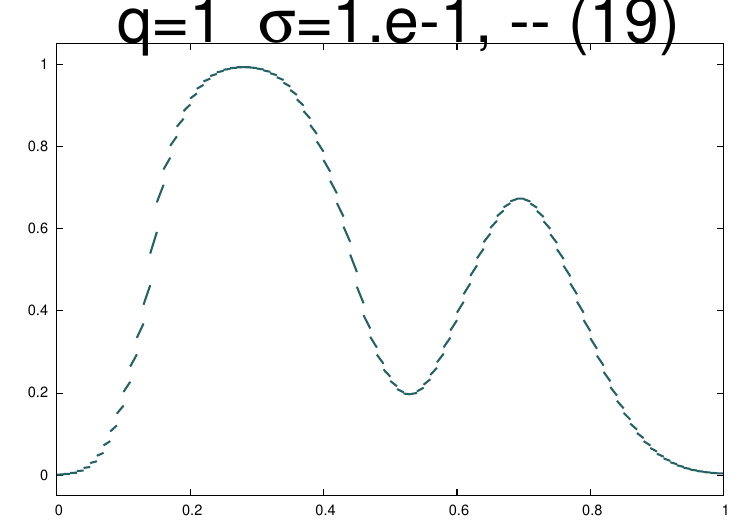}}
	\subfigure{\includegraphics[clip=true, trim = 0cm 0cm 0cm 0cm, width=3.5cm]{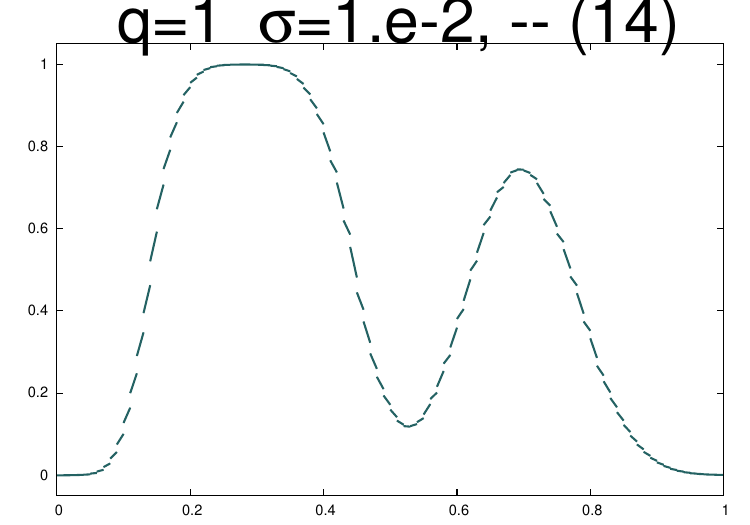}}
	\subfigure{\includegraphics[clip=true, trim = 0cm 0cm 0cm 0cm, width=3.5cm]{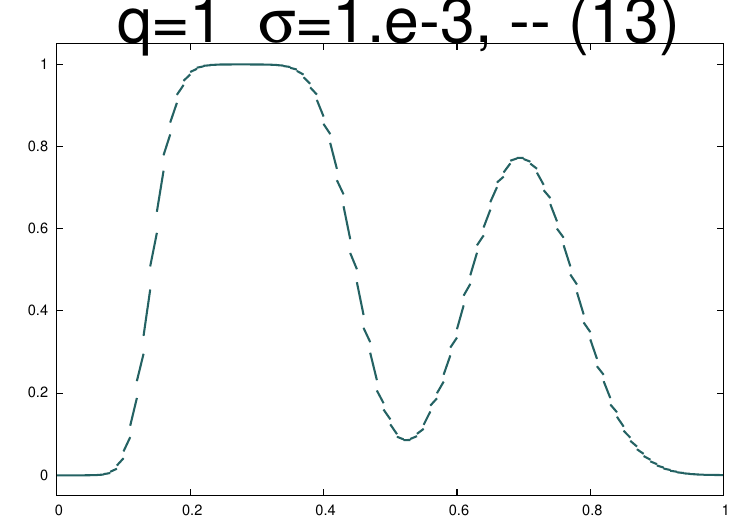}}
	\subfigure{\includegraphics[clip=true, trim = 0cm 0cm 0cm 0cm, width=3.5cm]{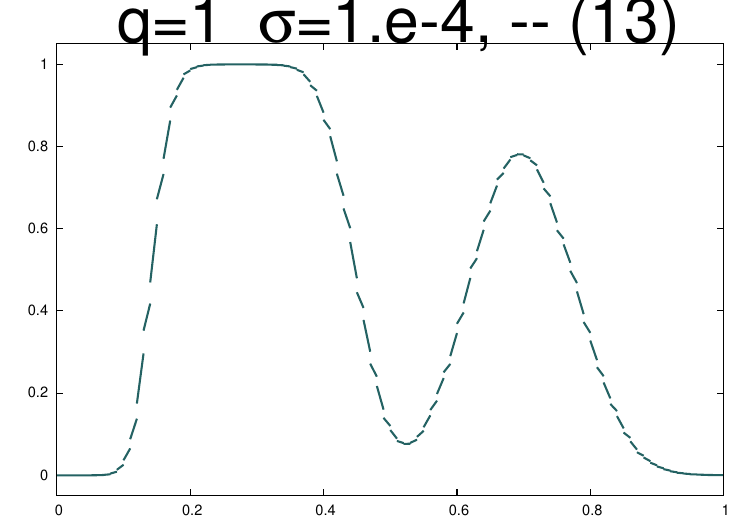}} \\
	\subfigure{\includegraphics[clip=true, trim = 0cm 0cm 0cm 0cm, width=3.5cm]{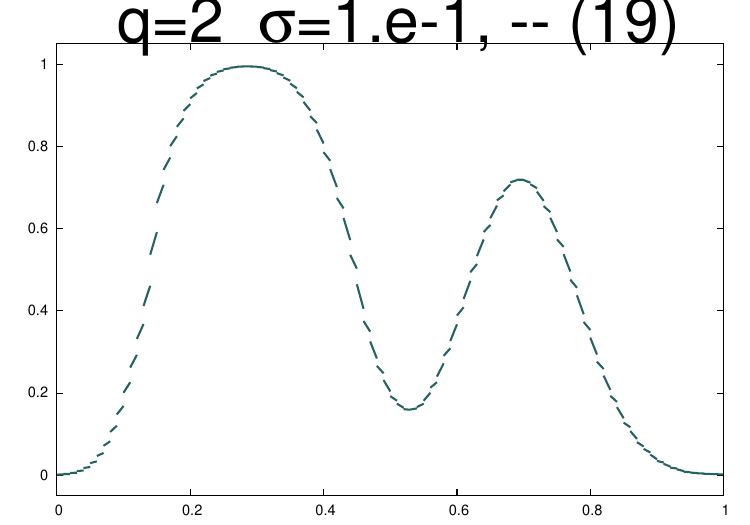}}
	\subfigure{\includegraphics[clip=true, trim = 0cm 0cm 0cm 0cm, width=3.5cm]{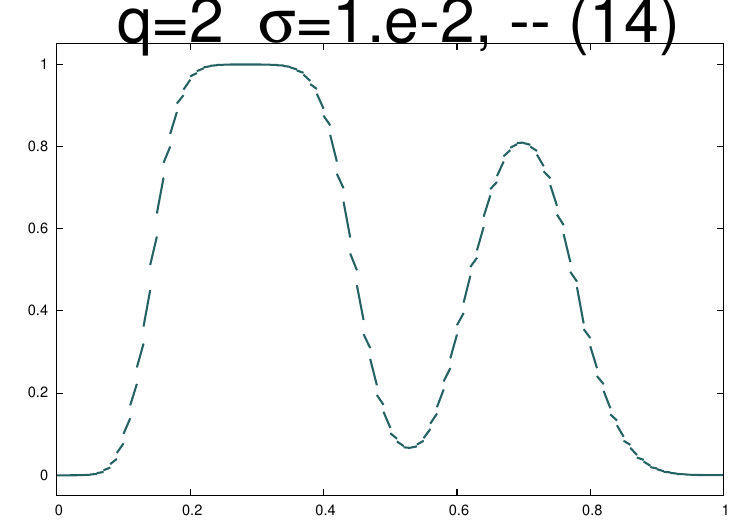}}
	\subfigure{\includegraphics[clip=true, trim = 0cm 0cm 0cm 0cm, width=3.5cm]{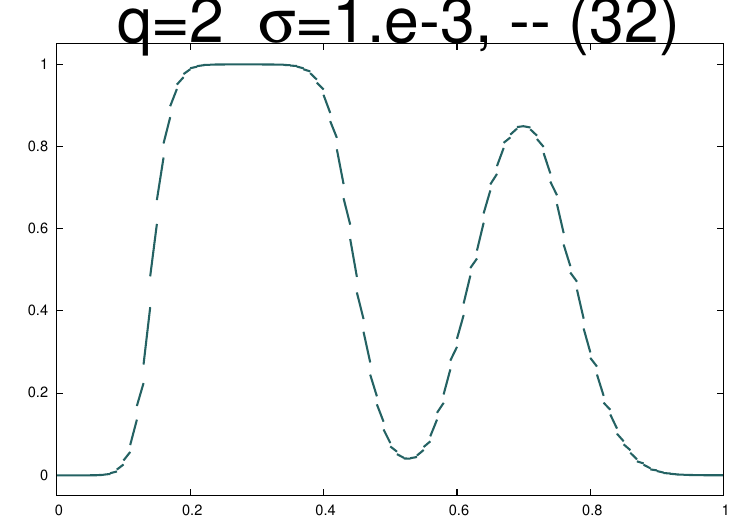}}
	\subfigure{\includegraphics[clip=true, trim = 0cm 0cm 0cm 0cm, width=3.5cm]{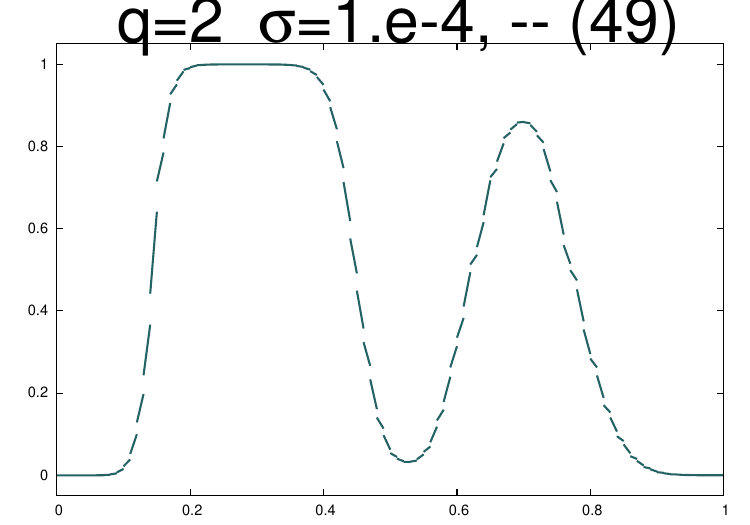}} \\
	\subfigure{\includegraphics[clip=true, trim = 0cm 0cm 0cm 0cm, width=3.5cm]{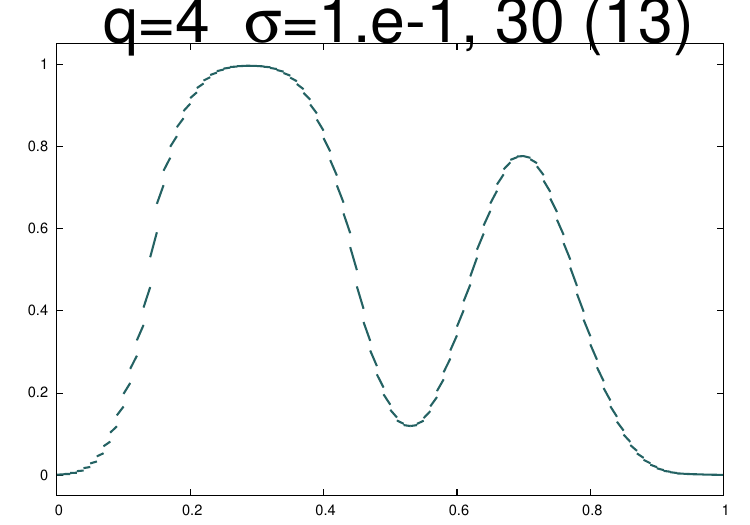}}
	\subfigure{\includegraphics[clip=true, trim = 0cm 0cm 0cm 0cm, width=3.5cm]{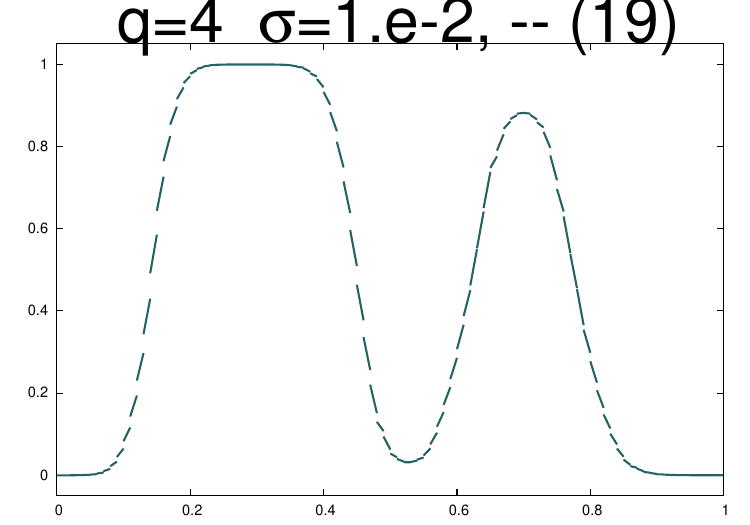}}
	\subfigure{\includegraphics[clip=true, trim = 0cm 0cm 0cm 0cm, width=3.5cm]{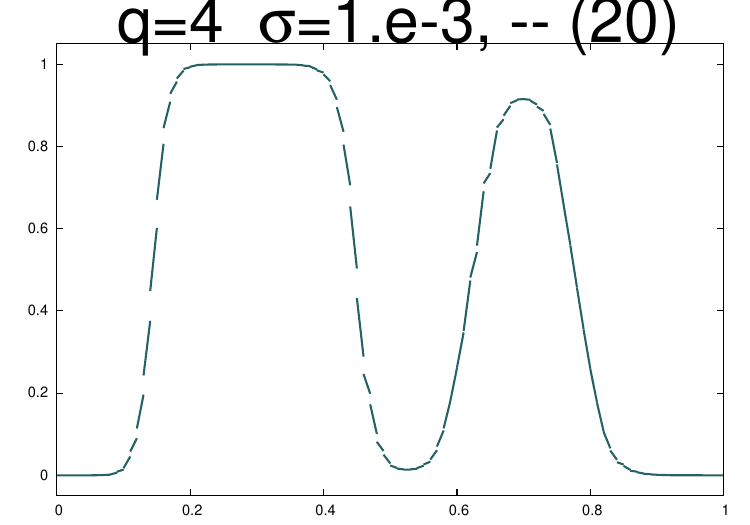}}
	\subfigure{\includegraphics[clip=true, trim = 0cm 0cm 0cm 0cm, width=3.5cm]{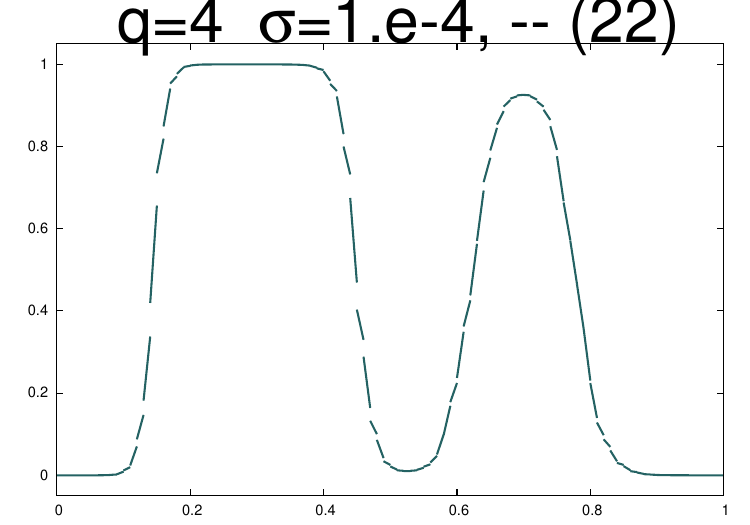}} \\
	\subfigure{\includegraphics[clip=true, trim = 0cm 0cm 0cm 0cm, width=3.5cm]{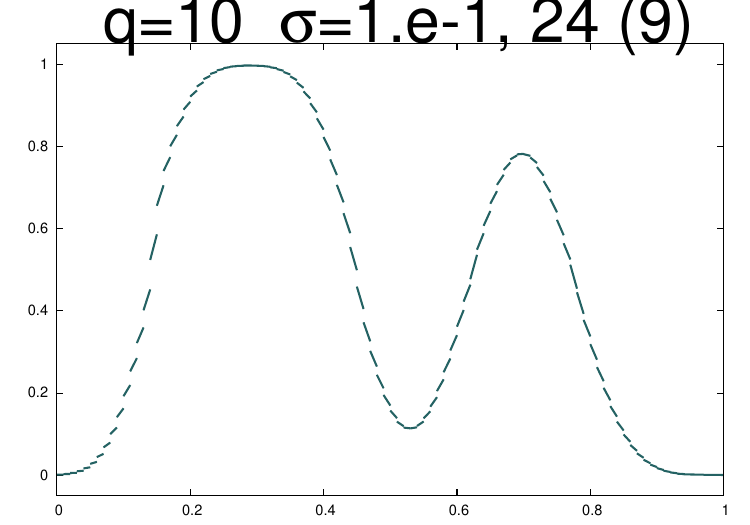}}
	\subfigure{\includegraphics[clip=true, trim = 0cm 0cm 0cm 0cm, width=3.5cm]{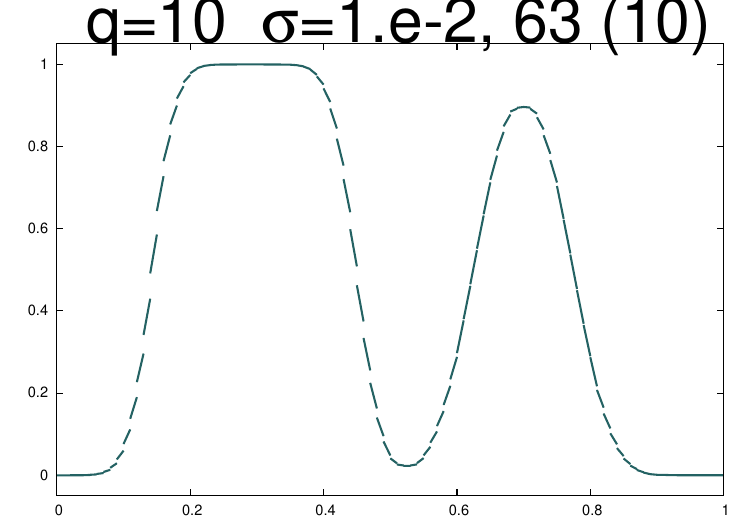}}
	\subfigure{\includegraphics[clip=true, trim = 0cm 0cm 0cm 0cm, width=3.5cm]{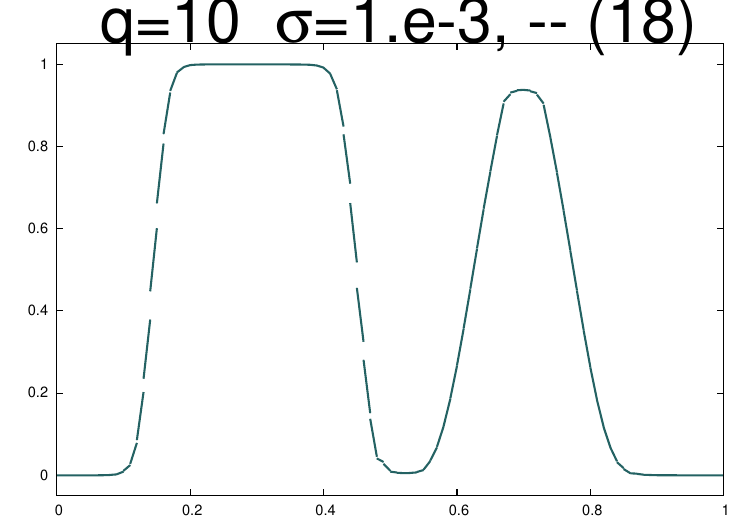}}
	\subfigure{\includegraphics[clip=true, trim = 0cm 0cm 0cm 0cm, width=3.5cm]{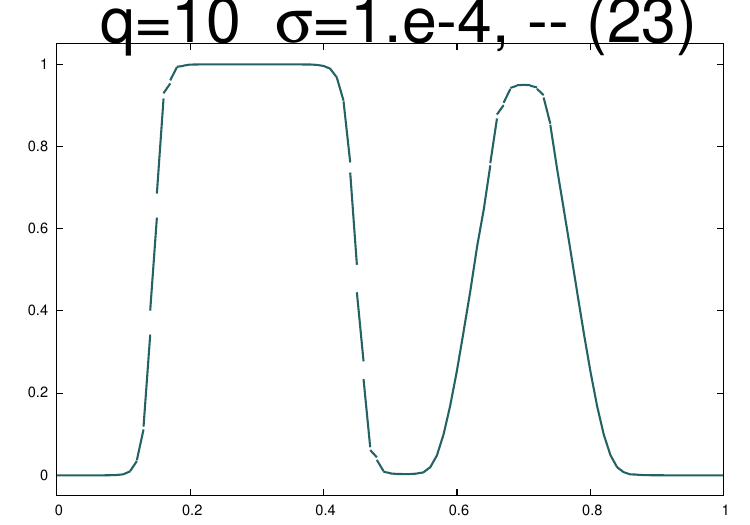}}\\
	\caption{Profile of the solution of \eqref{eq-tuningprb} on the outflow side ($y=0$) for different values of $q$, $\smoothperturbc$, $\smoothalphac=\smoothperturbc^2$, and  $\smoothquotc=10^{-2}$. Each figure title indicates the value of $q$, $\sigma$, the number of nonlinear iterations for Picard and the hybrid scheme (in brackets), (-\,-) means ``not converged''. A $100\times 100$ mesh has been used. }\label{fig-setq-newton}
\end{figure}

For the moment, we have fixed the relation between $\smoothperturbc$
and $\smoothalphac$. In the next test we fix $q=10$, $\gamma=10^{-2}$,
and different values for $\smoothalphac$ and $\smoothperturbc$. In
particular we will use
$\smoothalphac=\{10^{-1},10^{-2}10^{-4},10^{-8}\}$ and
$\smoothperturbc=\{10^{-1},10^{-2},10^{-3},10^{-4}\}$. We will use the
same nonlinear solvers as before. The obtained results are shown in
\Fig{setsigmatau2}. Values of $\smoothperturbc$ around $10^{-2}$ and
$\smoothalphac$ above $10^{-4}$ are needed to ensure Picard
convergence. In the case of the hybrid scheme, we do not observe much
difference in terms of the number of iterations required to converge
for different values of $\smoothalphac$. Nevertheless, as
$\smoothperturbc$ is reduced the method requires a slightly larger
number of iterations.
\begin{figure}
	\centering
	\subfigure{\includegraphics[clip=true, trim = 0cm 0cm 0cm 0cm, width=3.5cm]{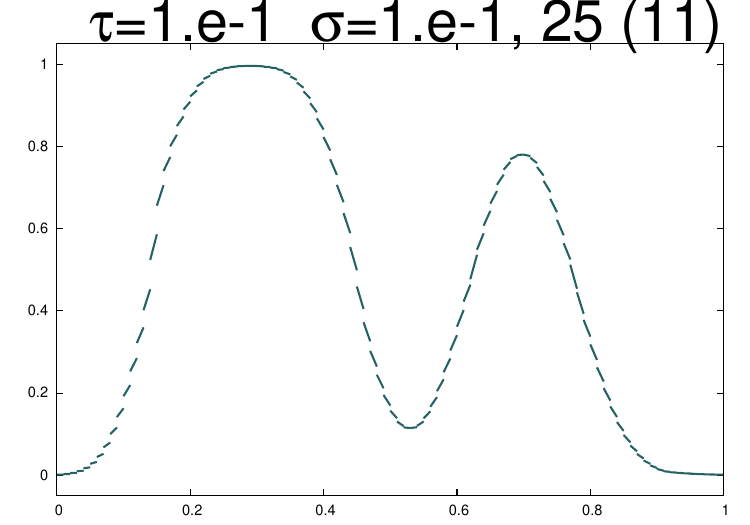}}%
	\subfigure{\includegraphics[clip=true, trim = 0cm 0cm 0cm 0cm, width=3.5cm]{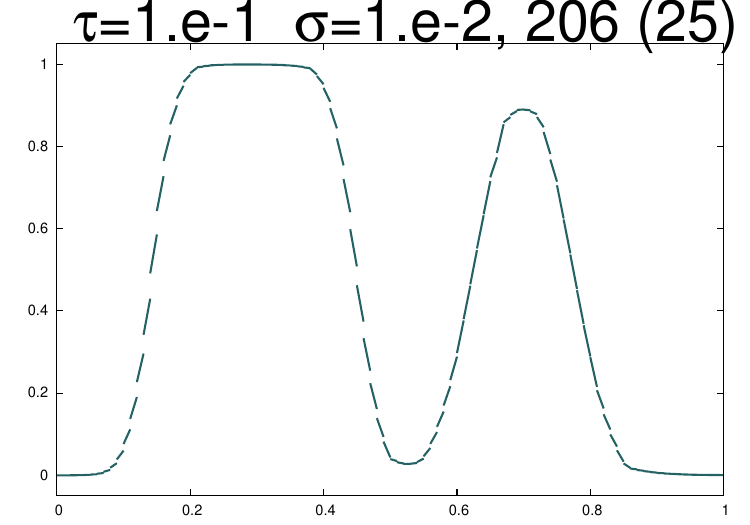}}%
	\subfigure{\includegraphics[clip=true, trim = 0cm 0cm 0cm 0cm, width=3.5cm]{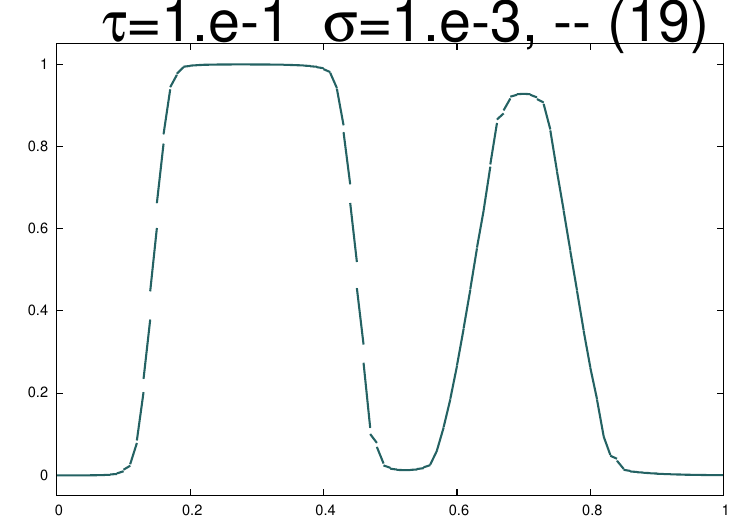}}%
	\subfigure{\includegraphics[clip=true, trim = 0cm 0cm 0cm 0cm, width=3.5cm]{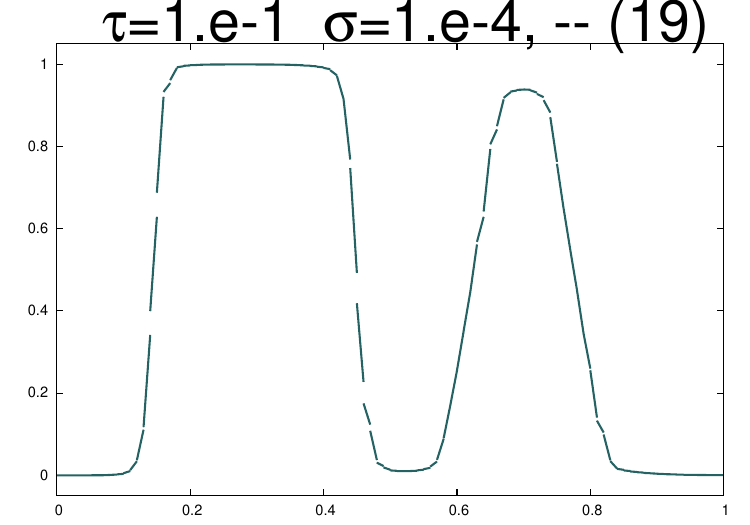}}
	\subfigure{\includegraphics[clip=true, trim = 0cm 0cm 0cm 0cm, width=3.5cm]{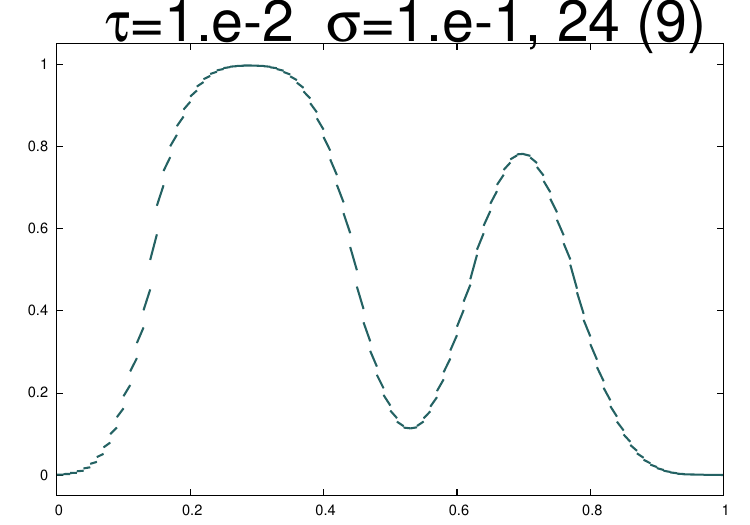}}%
	\subfigure{\includegraphics[clip=true, trim = 0cm 0cm 0cm 0cm, width=3.5cm]{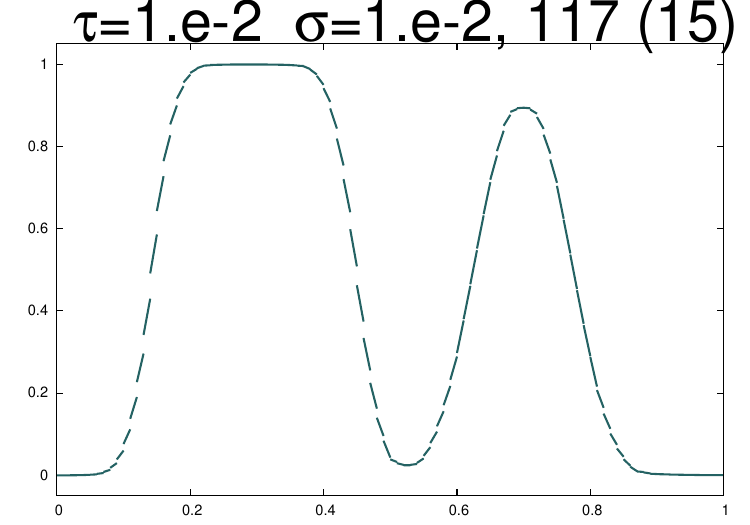}}%
	\subfigure{\includegraphics[clip=true, trim = 0cm 0cm 0cm 0cm, width=3.5cm]{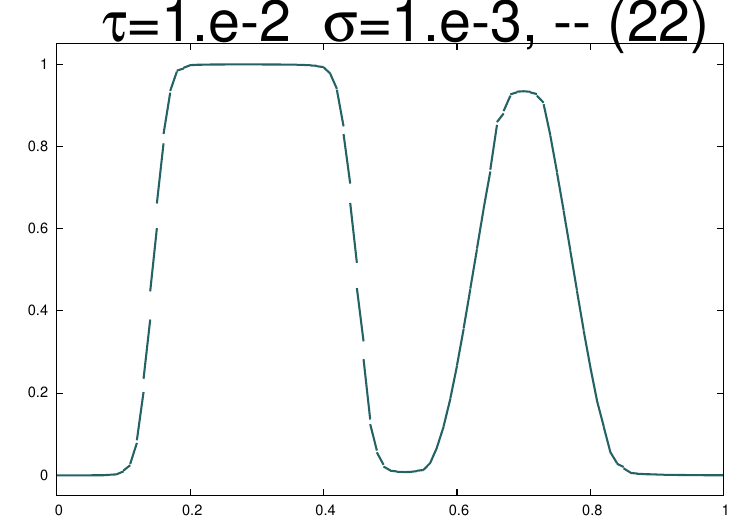}}%
	\subfigure{\includegraphics[clip=true, trim = 0cm 0cm 0cm 0cm, width=3.5cm]{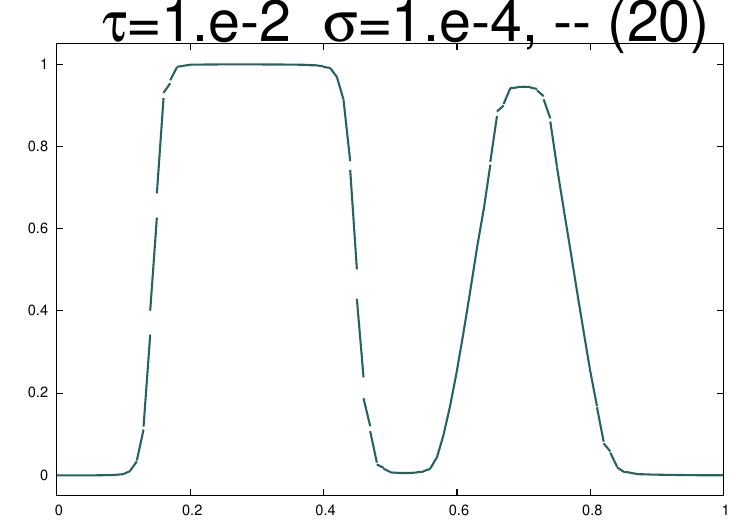}}
	\subfigure{\includegraphics[clip=true, trim = 0cm 0cm 0cm 0cm, width=3.5cm]{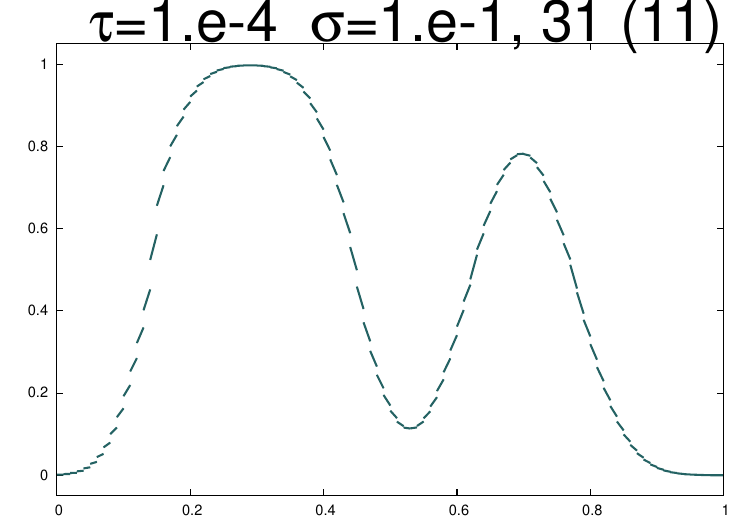}}%
	\subfigure{\includegraphics[clip=true, trim = 0cm 0cm 0cm 0cm, width=3.5cm]{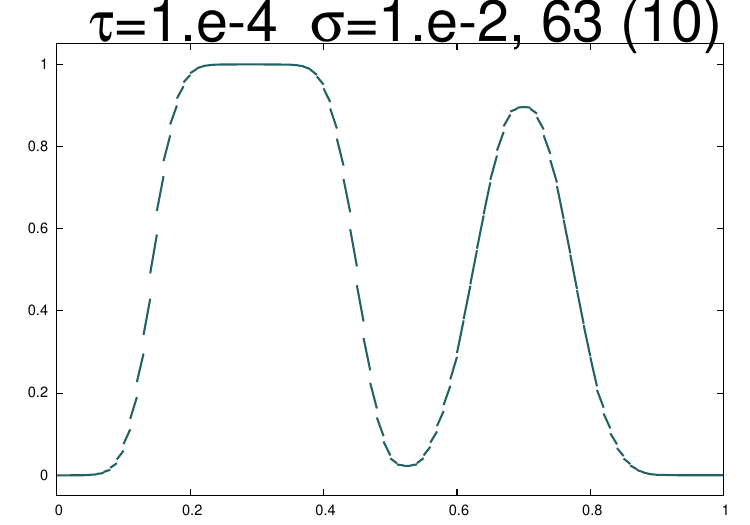}}%
	\subfigure{\includegraphics[clip=true, trim = 0cm 0cm 0cm 0cm, width=3.5cm]{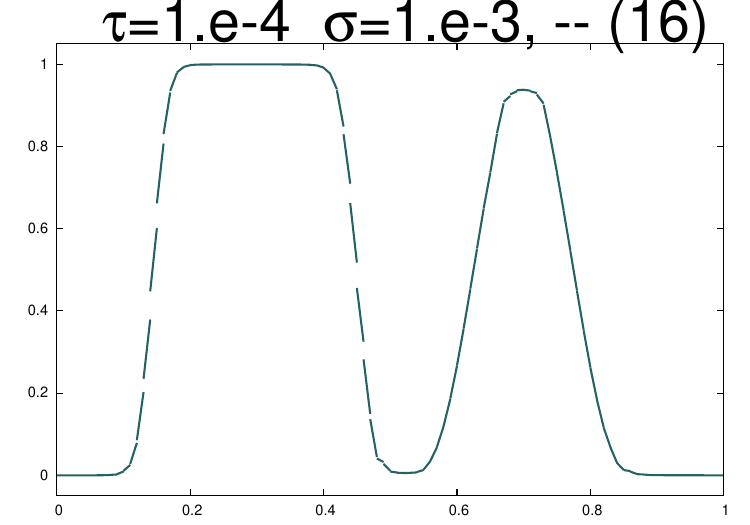}}%
	\subfigure{\includegraphics[clip=true, trim = 0cm 0cm 0cm 0cm, width=3.5cm]{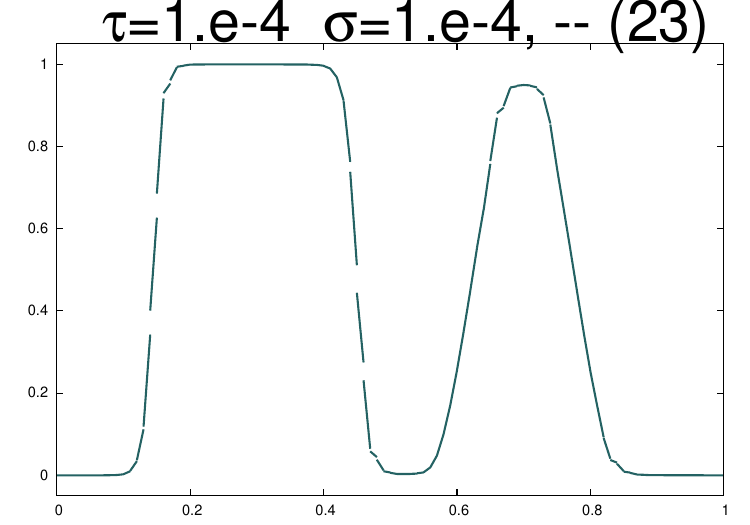}}
	\subfigure{\includegraphics[clip=true, trim = 0cm 0cm 0cm 0cm, width=3.5cm]{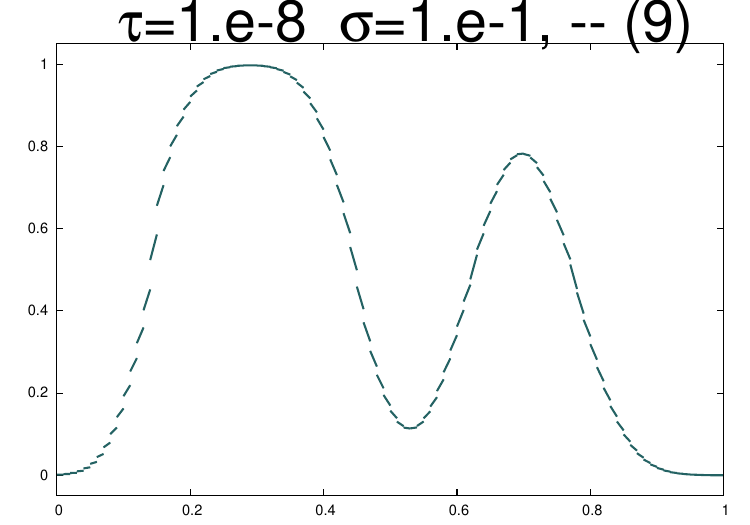}}%
	\subfigure{\includegraphics[clip=true, trim = 0cm 0cm 0cm 0cm, width=3.5cm]{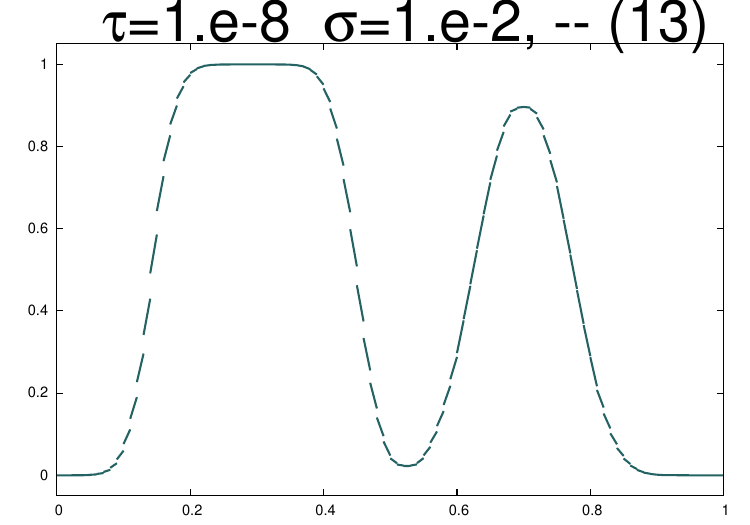}}%
	\subfigure{\includegraphics[clip=true, trim = 0cm 0cm 0cm 0cm, width=3.5cm]{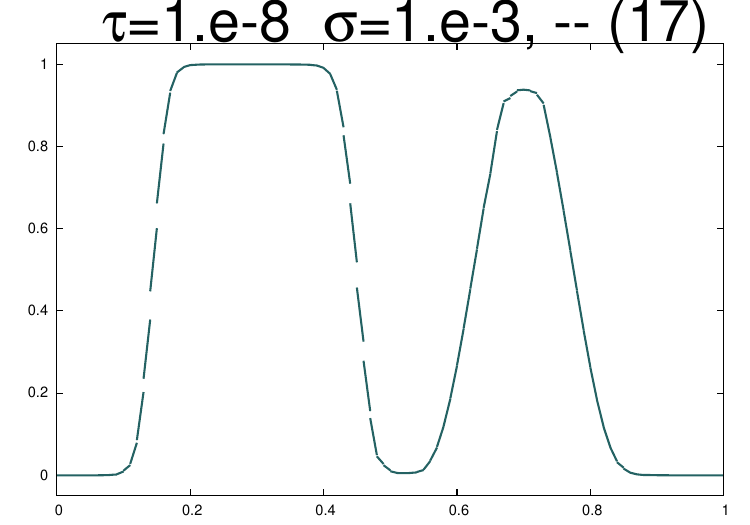}}%
	\subfigure{\includegraphics[clip=true, trim = 0cm 0cm 0cm 0cm, width=3.5cm]{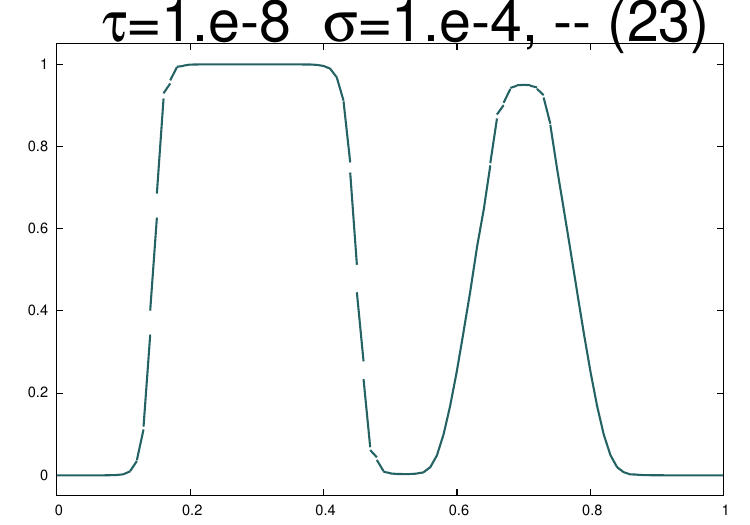}}
	\caption{Profile of the solution of \eqref{eq-tuningprb} on the outflow side ($y=0$) for different values of $\smoothperturbc$, $\smoothalphac$, $q=10$, and  $\smoothquotc=10^{-2}$. Each figure title indicates the value of $\smoothalphac$, $\smoothperturbc$, the number of nonlinear iterations for Picard and the hybrid scheme (in brackets), (-\,-) means ``not converged''. A $100\times 100$ mesh has been used. }\label{fig-setsigmatau2}
\end{figure}

Finally, we want to fine-tune $\smoothquotc$. To do so, we will fix the values of $\smoothperturbc=10^{-2}$ and $\smoothalphac=10^{-4}$ and reduce the value of $\smoothquotc$ from $10^{-4}$ to $0$. We can check in \Fig{setgamma} how, even with $\smoothquotc=0$, the solution is able to converge, but the number of iteration is larger than between $\gamma=10^{-4}$ and $\gamma=10^{-12}$ whereas the solution is practically the same. In our numerical experiments we will work with the smooth and non-smooth version and with both nonlinear solvers. Thus, when comparing results we are interested in solutions that converge in a reasonable amount of time steps (we take $\gamma=10^{-2}$).

In the transient examples, we will only use Picard linearization, allowing us take $\gamma=0$ to see whether we can obtain a sharper solution. Additionally, we will let $Q$, the exponent of $\alpha_a$ for the perturbation of the mass matrix in \eqref{eq-lumpedmass}, be $Q=+\infty$; meaning that the matrix is only perturbed when $\alpha_a=1$. Nevertheless, we recall that we can do so because the nonlinear iterative method that we use does not need the stabilization to be differentiable; if that is not the case, $\gamma$ must be greater than $0$ and $Q<+\infty$. Summarizing, in view of the results obtained, we will use $q=10$, $Q=\infty$, $\smoothperturbc=10^{-2}$, $\smoothalphac\leq 10^{-6}$ and either $\smoothquotc=10^{-2}$ or $\smoothquotc=0$ in the oncoming transient numerical experiments.
\begin{figure}
	\centering
	\subfigure{\includegraphics[clip=true, trim = 0cm 0cm 0cm 0cm, width=3.5cm]{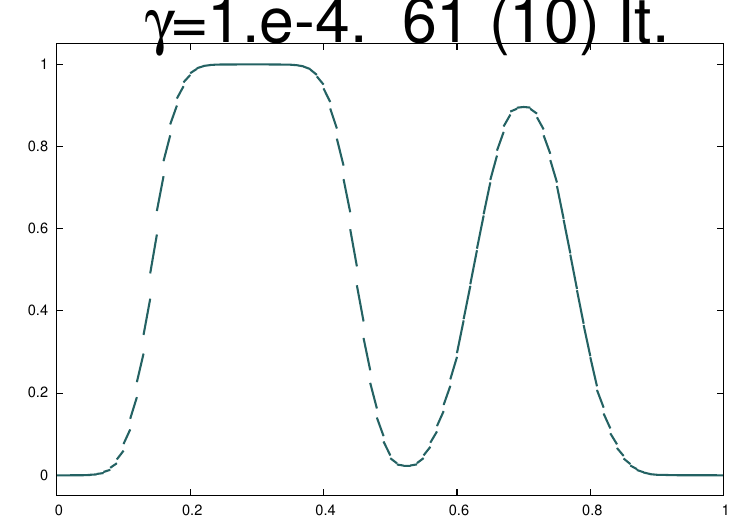}}%
	\subfigure{\includegraphics[clip=true, trim = 0cm 0cm 0cm 0cm, width=3.5cm]{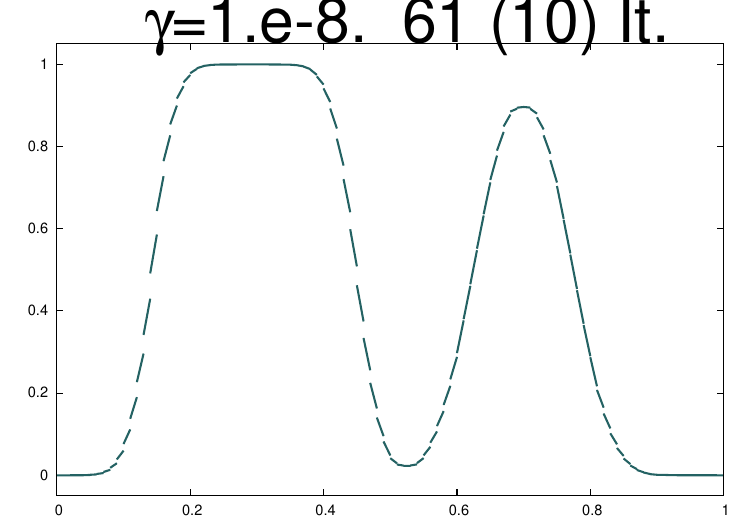}}%
	\subfigure{\includegraphics[clip=true, trim = 0cm 0cm 0cm 0cm, width=3.5cm]{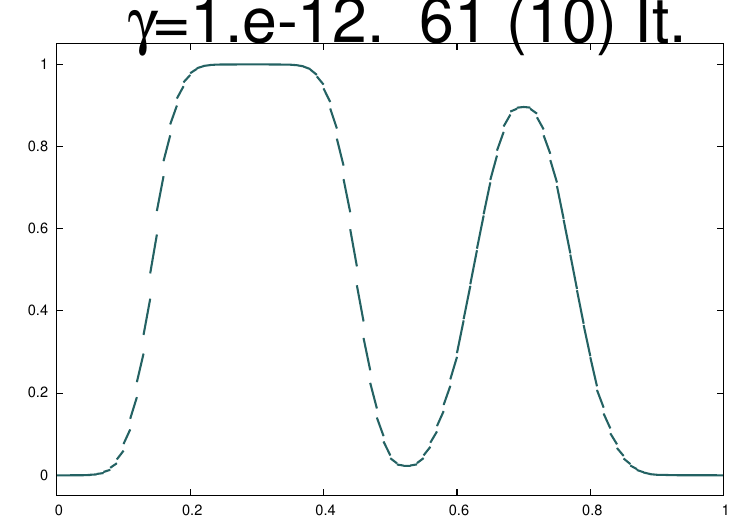}}%
	\subfigure{\includegraphics[clip=true, trim = 0cm 0cm 0cm 0cm, width=3.5cm]{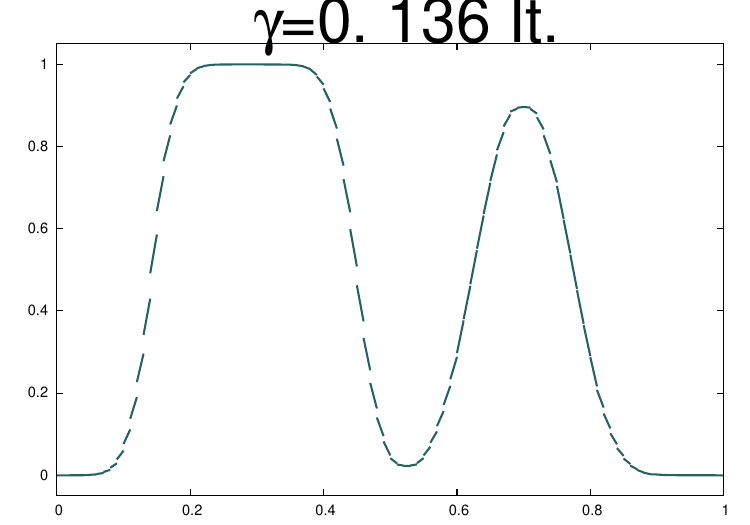}}
	\caption{Profile of the solution of \eqref{eq-tuningprb} on the outflow side ($y=0$) for different values of $\smoothquotc$, $q=10$, $\smoothperturbc=10^{-2}$, and $\smoothalphac=10^{-4}$. Each figure title indicates the value of $\smoothquotc$, and the number of nonlinear iterations for Picard scheme \JB{and the hybrid scheme (in brakets). For $\gamma=0$ the number of iterations is not given for the hybrid scheme since the stabilization is not smooth and the Jacobian might be undefined at some points}. A $100\times 100$ mesh has been used.}\label{fig-setgamma}
\end{figure}

\section{Numerical experiments}\label{sec-numexp4}

In this section, we are interested in showing how the method previously introduced deals with a set of numerical experiments. We recall that, since we are using isotropic uniform meshes, the value of the characteristic length $h_K$ of each element $K$ can be computed as the length of the edges of the squares.

\subsection{Convergence to a smooth solution}
In this first experiment we want to determine the convergence of the method towards a smooth solution that has maxima and minima inside the domain ($\alpha_a$ reaches the value $1$ in some regions in the domain). We compare the performance of the original interior penalty dG method against our dG method with artificial diffusion, both with and without smoothing (allowing a maximum of $100$ iterations). The steady problems we solve to this end are the following:
\begin{equation}
\left\{
\begin{array}{rcll}
- \mu\Delta u + \gradient\cdot(\betab u)  &=& -4\pi^2\sin\left(2\pi\left(x-\frac{y}{\tan \theta}\right)\right)\left(1+\frac{1}{\tan^2\theta}\right) & \rm{in}\ \Omega=[0,1]\times[0,1], \\
u(x,y)&=&\sin\left(2\pi\left(x-\frac{y}{\tan \theta}\right)\right) & \rm{on}\ \partial\domain,
\end{array}\right.
\end{equation}
and
\begin{equation}
\left\{
\begin{array}{rcll}
\gradient\cdot(\betab u)  &=& 0 & \rm{in}\ \Omega=[0,1]\times[0,1], \\
u(x,y)&=&\sin\left(2\pi\left(x-\frac{y}{\tan \theta}\right)\right) & \rm{on}\ \partial\domain,
\end{array}\right.
\end{equation}
with $\betab=(\cos(\theta),\sin(\theta))$ and $\theta=\pi/3$. In both cases the exact solution is $u(x,y) = \sin(2\pi(x-\frac{y}{\tan \theta}))$. We have second-order convergence of the non-stabilized method with (bi)linear finite elements and we would like to preserve it for the stabilized one when applied to smooth solutions. This is indeed what happens. When we consider the problem with diffusion $\mu=1$ in which the contributions to the matrix are dominated by the diffusion term for the finest meshes, the error is almost the same for both the smoothed and the non-smoothed versions. \JB{
	It is important to note that the nonlinear solver is not able to converge for the finest meshes when using the non-smooth stabilization. This introduces an additional error responsible of the convergence degeneration observed at \Fig{smooth_solution4_mu1}.
	In addition, it is worth noting that} the requirement of the boundary condition extrapolation to achieve optimal convergence. Tests without the extrapolation in Remark \ref{rmk-bcextrapolation}, i.e., $u_\ab^\sym=\bc_a$, show degenerated convergence rates. This correction is more important as the gradients on the boundary become larger and the jumps smaller. As opposed, when working with pure transport (see \Fig{smooth_solution4_mu0}), the effect of this extrapolation becomes negligible. In any case, the order of convergence is maintained. For the pure convection test, the smoothing of the parameters add an extra error to the computed solution, as expected, since it implies more artificial diffusion.
\begin{figure}
	\centering
	\includegraphics[clip=true, trim = 0cm 0cm 0cm 0cm]{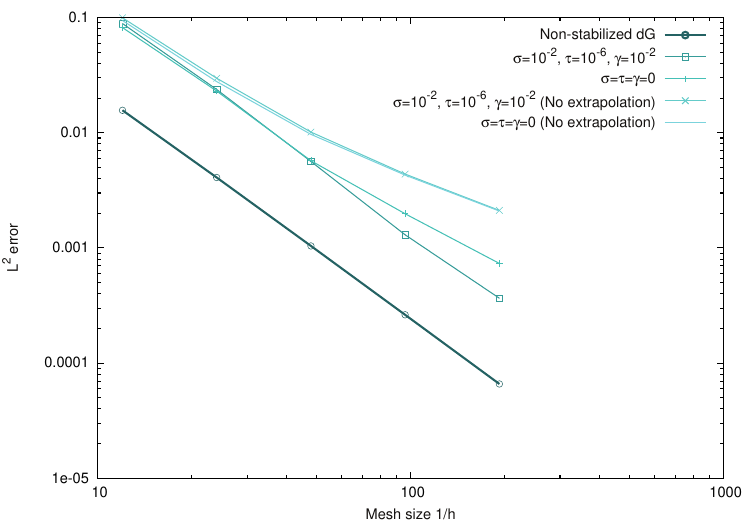}
	\caption{$L^2$ error convergence test for a convection-diffusion problem ($\mu=1$) with a smooth solution. Different choices of stabilization parameters have been tested.}\label{fig-smooth_solution4_mu1}
\end{figure}
\begin{figure}
	\centering
	\includegraphics[clip=true, trim = 0cm 0cm 0cm 0cm]{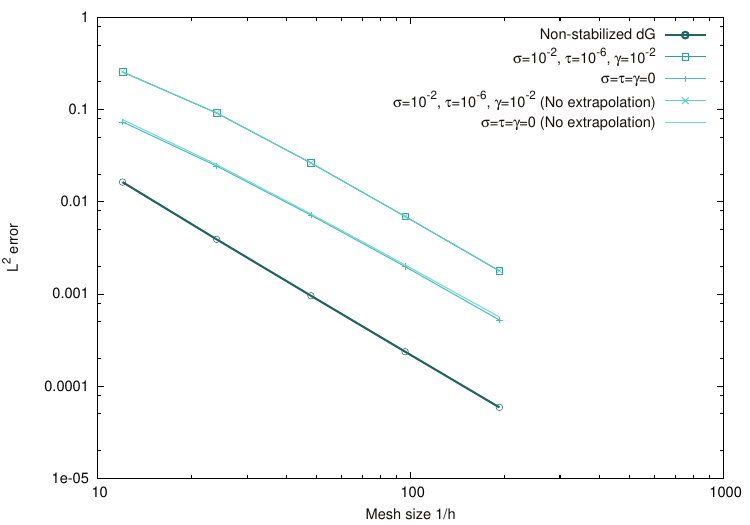}
	\caption{$L^2$ error convergence test for a pure convection problem ($\mu=0$) with a smooth solution. Different choices of stabilization parameters have been tested.}\label{fig-smooth_solution4_mu0}
\end{figure}

\subsection{DMP-preservation}
In order to test the performance of the method in terms of DMP-preservation, we solve the steady problem
\begin{equation} \label{eq-sharpl}
\left\{
\begin{array}{rcll}
- 10^{-4}\Delta u + \gradient\cdot(\betab u)  &=&0 & \rm{in}\ \Omega=[0,1]\times[0,1], \\
u(x,0)&=&0 & x\in[0,1],\\
u(x,1)&=&1 & x\in[0,1],\\
u(0,y)&=&\frac{1}{2}+\frac{1}{\pi}\arctan(10^4(y-0.7)) & y\in(0,1),\\
u(1,y)&=&0 & y\in(0,1),\\
\end{array}\right.
\end{equation}
with $\betab=(\cos(\pi/3),-\sin(\pi/3))$. As the problem is convection-dominated, the expected result is a propagation in the direction defined by $\betab$ of the profile imposed in the inflow boundary $x=0$. It is well known that when the method is not stabilized, it leads to a solution that has strong oscillations around the internal and boundary layers. We expect our method to control such spurious oscillations, as already proved in the numerical analysis.

We use a $100\times100\,Q_1$ \JB{Cartesian} mesh, set the tolerance of the nonlinear iterations to $10^{-4}$, and allow a maximum of $500$ iterations. We plot the maximum oscillation, defined as 
\begin{equation}\label{eq-osc}
{\rm OSC}=\max\{0,-\min_{x\in\domain} u_h(x),\max_{x\in\domain} (u_h(x)-1),\},
\end{equation}
at each iteration. Both the smooth and the non-smooth methods are tested and both nonlinear solvers above are used for the smooth version. The results \JB{in \Fig{maxosc}} show that only the hybrid method is able to converge. \JB{Let us remark that Picard's method has not reeached convergence (having a maximum of 500 iterations) in any case, even though the non-converged solution with smooth stabilization satisfies the DMP up to machine precision.} On the contrary, the hybrid method satisfies the DMP up to the tolerance, but it only needs 15 iterations to converge. The results obtained are plotted in \Fig{layer} and are as sharp as the non-smooth ones.
\begin{figure}
	\centering
	\includegraphics[clip=true, trim = 0cm 0cm 0cm 0cm]{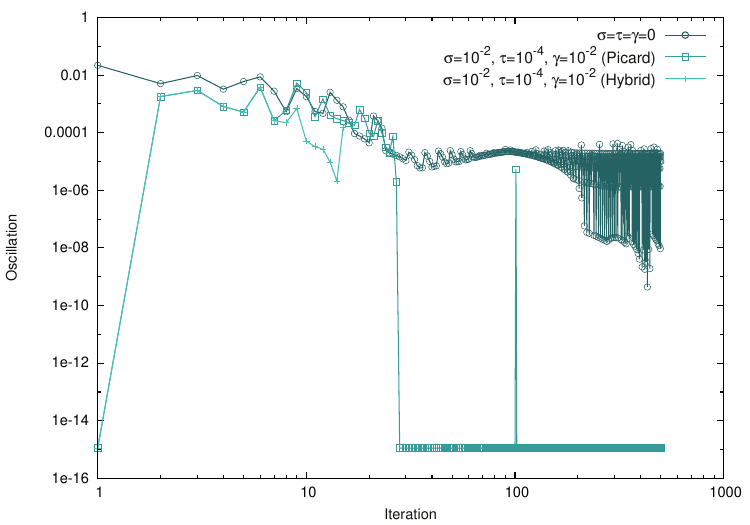}
	\caption{Maximum oscillation as defined in \eqref{eq-osc} for each nonlinear iteration performed for solving \eqref{eq-sharpl}. The results for both a smooth and a non-smooth stabilization, as well as for different nonlinear schemes are depicted. A $100\times 100\,Q_1$ mesh has been used for the tests. }\label{fig-maxosc}
\end{figure} 
\begin{figure}
	\centering
	\subfigure[$\smoothperturb=10^{-2}$; $\smoothalpha=10^{-4}$; $\smoothquot=10^{-2}$. 13 Iterations.]{\includegraphics[width=7cm]{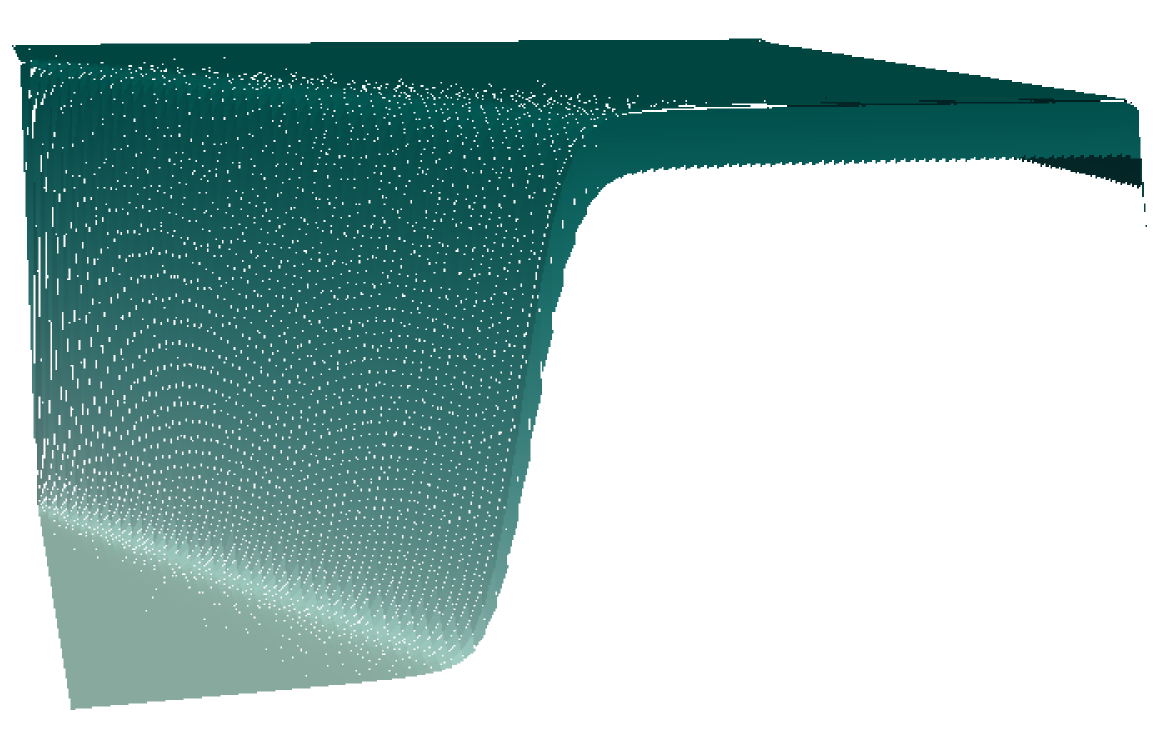}}%
	\subfigure[$\smoothperturbc=\smoothalphac=\smoothquotc=0$. NC.]{\includegraphics[width=7cm]{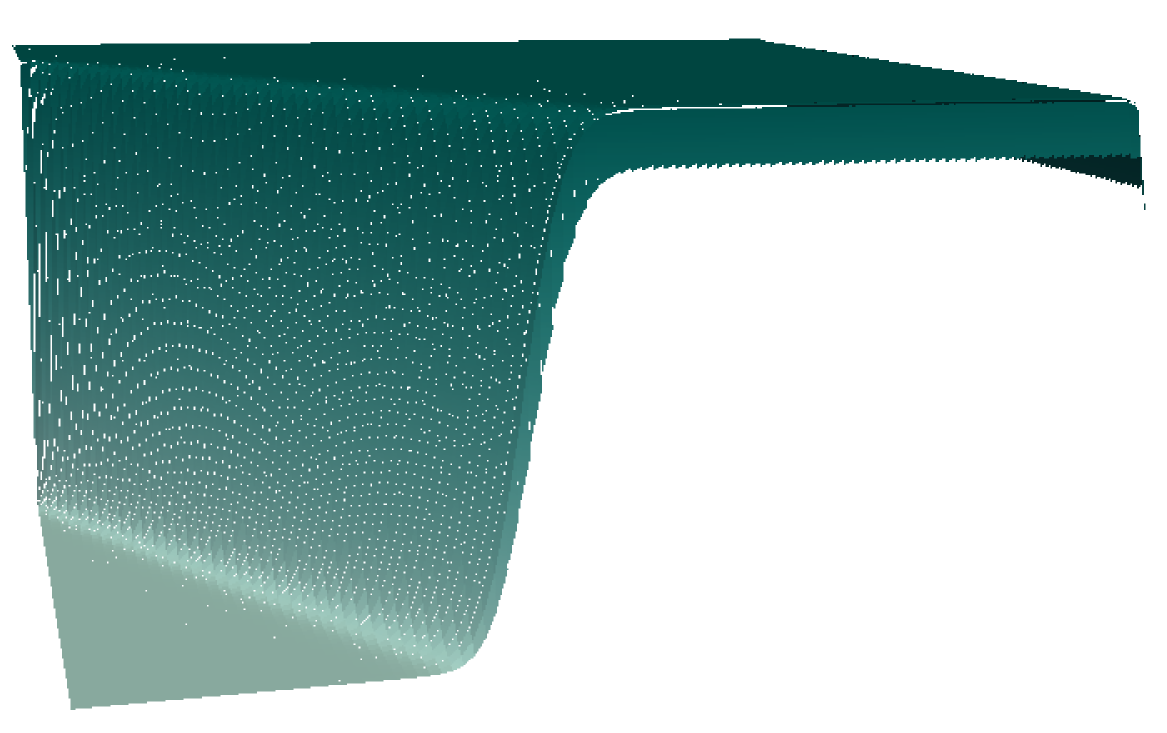}}%
	\caption{Solution of problem \eqref{eq-sharpl} using both the smooth and the non-smooth version of the stabilization. A $100\times 100$ mesh have been used. }\label{fig-layer}
\end{figure}

\JB{We know from the numerical analysis that the method preserves the DMP on unstructured meshes. In the following test we are checking this result and analyzing if it affects to the accuracy of the method. To this end, we solve again the previous problem using an unstructured mesh with an element size $h\approx 10^{-2}$ (see \Fig{layer-uns} (a)). The solution is depicted in \Fig{layer-uns}(b). It can be seen that the accuracy is not affected by reasonable mesh perturbations. Moreover, we compare the effect of the smoothing in the case of unstructured meshes. It can be observed in \Fig{maxosc-uns} that the stabilization method is minimally affected by the mesh topology. Nevertheless, it is worth noting that the number of iterations slightly increases for the hybrid method, from 15 to 17. In any case, we achieve the same conclusions as when using the structured mesh. The hybrid method is able to converge and it satisfies de DMP up to the tolerance. The relaxed Picard it is not able to converge, even though it leads to non-converged solutions that satisfy the DMP for the smooth case at the lasts iterations.}

\begin{figure}
	\centering
	\subfigure[Unstructured mesh used]{\includegraphics[width=55mm]{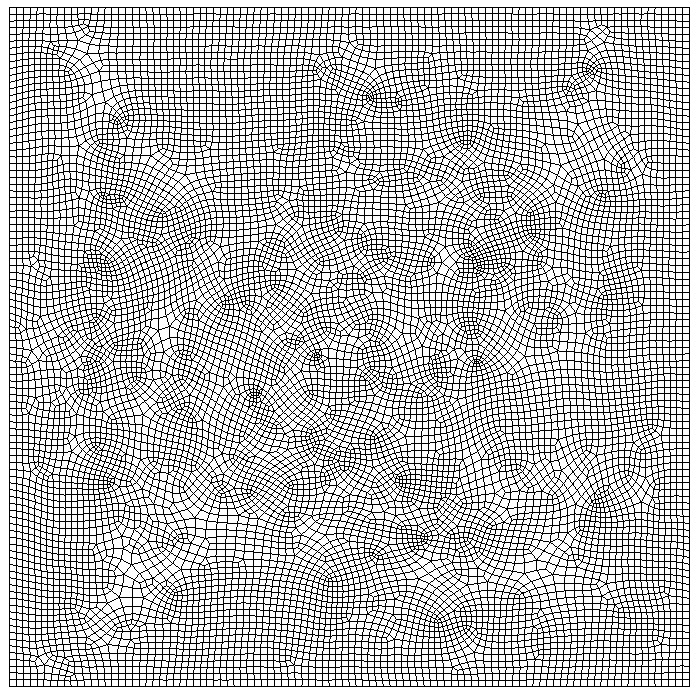}}%
	\hspace{10mm}
	\subfigure[$\smoothperturb=10^{-2}$; $\smoothalpha=10^{-4}$; $\smoothquot=10^{-2}$. 17 Iterations.]{\includegraphics[width=7cm]{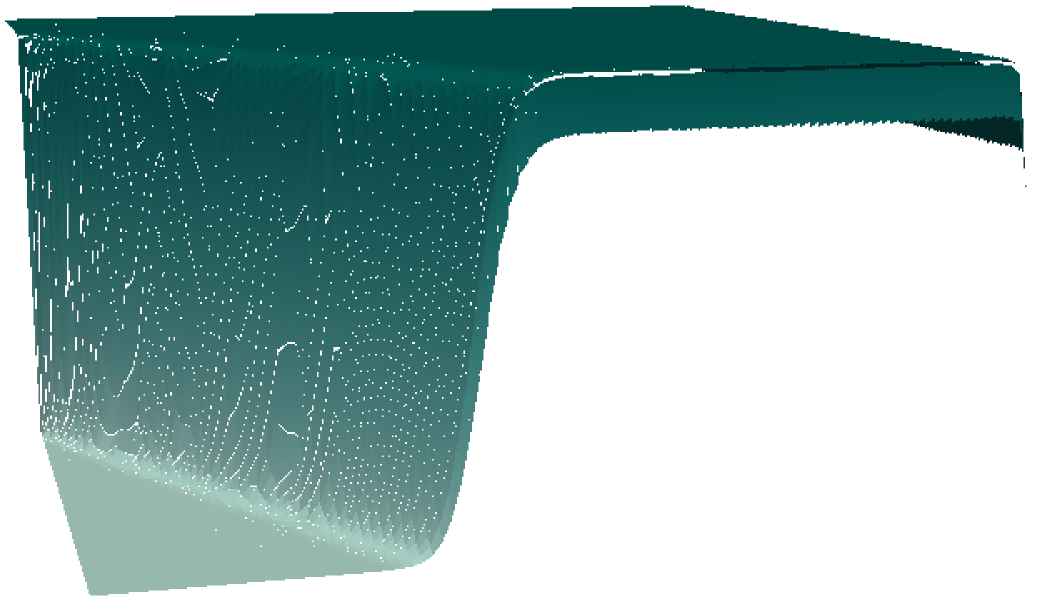}}%
	\caption{Solution of problem \eqref{eq-sharpl} using the smooth version of the stabilization. An unstructured mesh of $h\approx 10^{-2}$ have been used. }\label{fig-layer-uns}
\end{figure}

\begin{figure}
	\centering
	\includegraphics[clip=true, trim = 0cm 0cm 0cm 0cm]{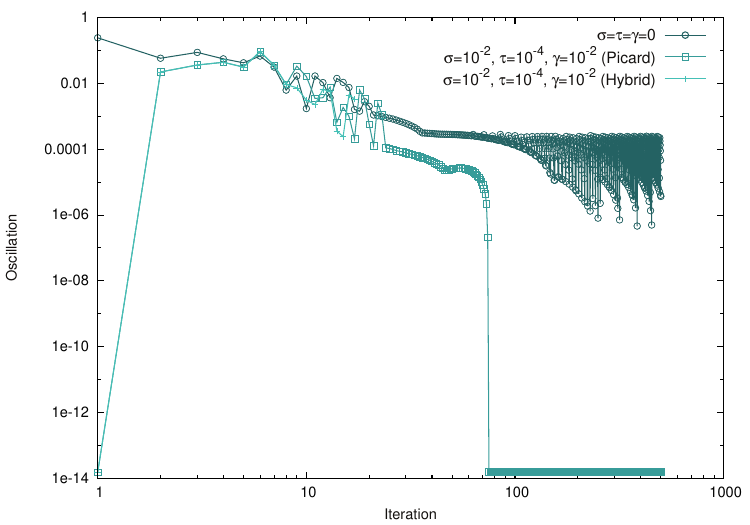}
	\caption{Maximum oscillation as defined in \eqref{eq-osc} for each nonlinear iteration performed for solving \eqref{eq-sharpl}. The results for both a smooth and a non-smooth stabilization, as well as for different nonlinear schemes are depicted. An unstructured mesh of $h\approx 10^{-2}$ has been used for the tests. }\label{fig-maxosc-uns}
\end{figure}

\subsection{Three body rotation}
\label{sec-transientpbm4}
Finally, we want to test the DMP-preservation and LED property of the method for transient problems. To this end, we use the classical three body rotation test. We solve the $2$D transport equation \eqref{eq-strongform4} in $\Omega = [0,1]\times[0,1]$ with $\mu=0$, $\betab=(-2\pi (y-0.5), 2\pi (x-0.5))$. The initial solution is given in \cite{dmitri_kuzmin_guide_2010} and its interpolation in a mesh of $200\times 200$ bilinear elements is displayed in \Fig{threebody0}. 

\begin{figure}[h]
	\centering
	\subfigure[2D]{\includegraphics[clip=true, trim = 10cm 5cm 10cm 5cm, width=6cm]{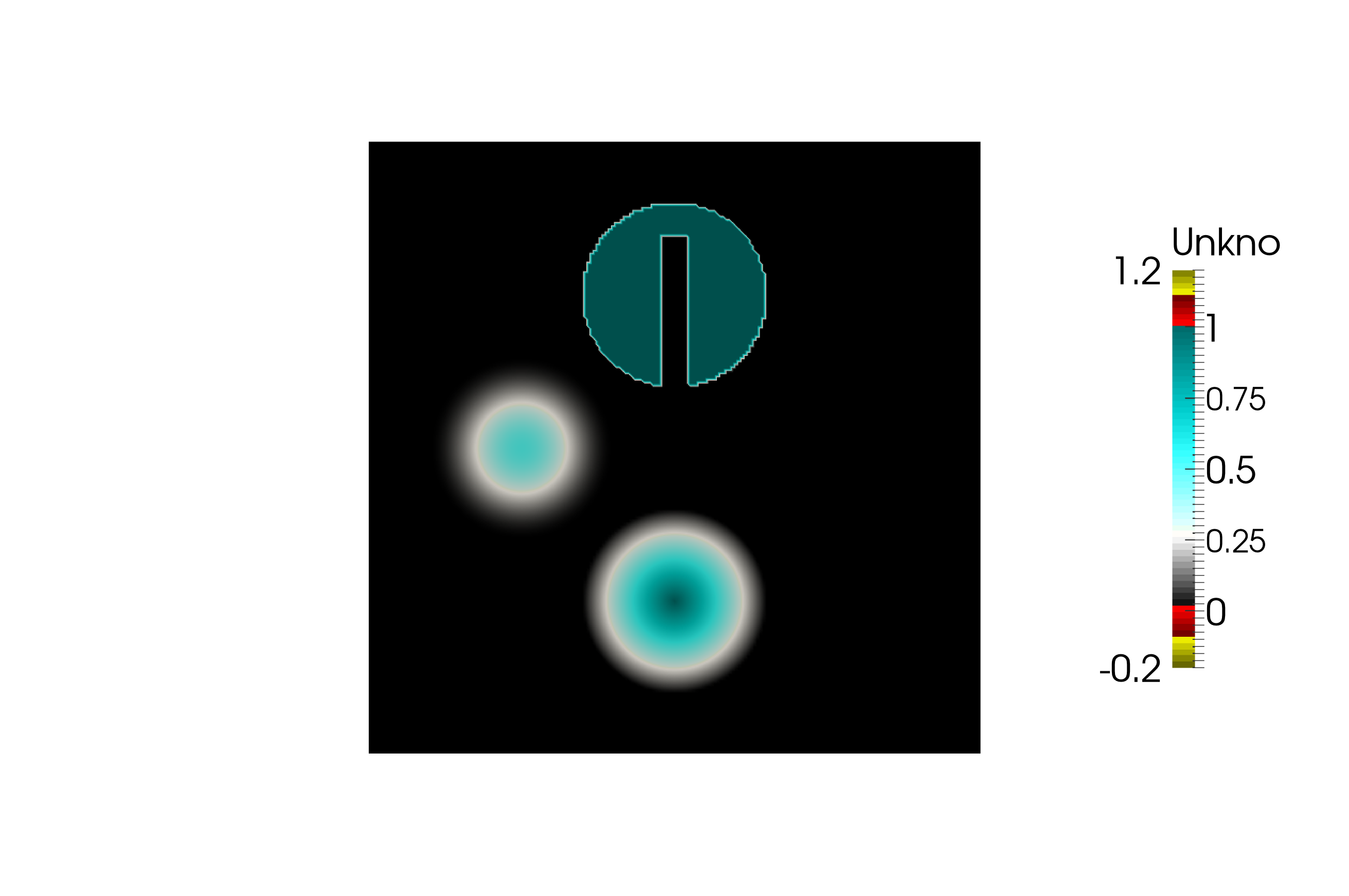}}%
	\subfigure[3D]{\includegraphics[clip=true, trim = 9cm 5cm 7.5cm 0cm, width=6cm]{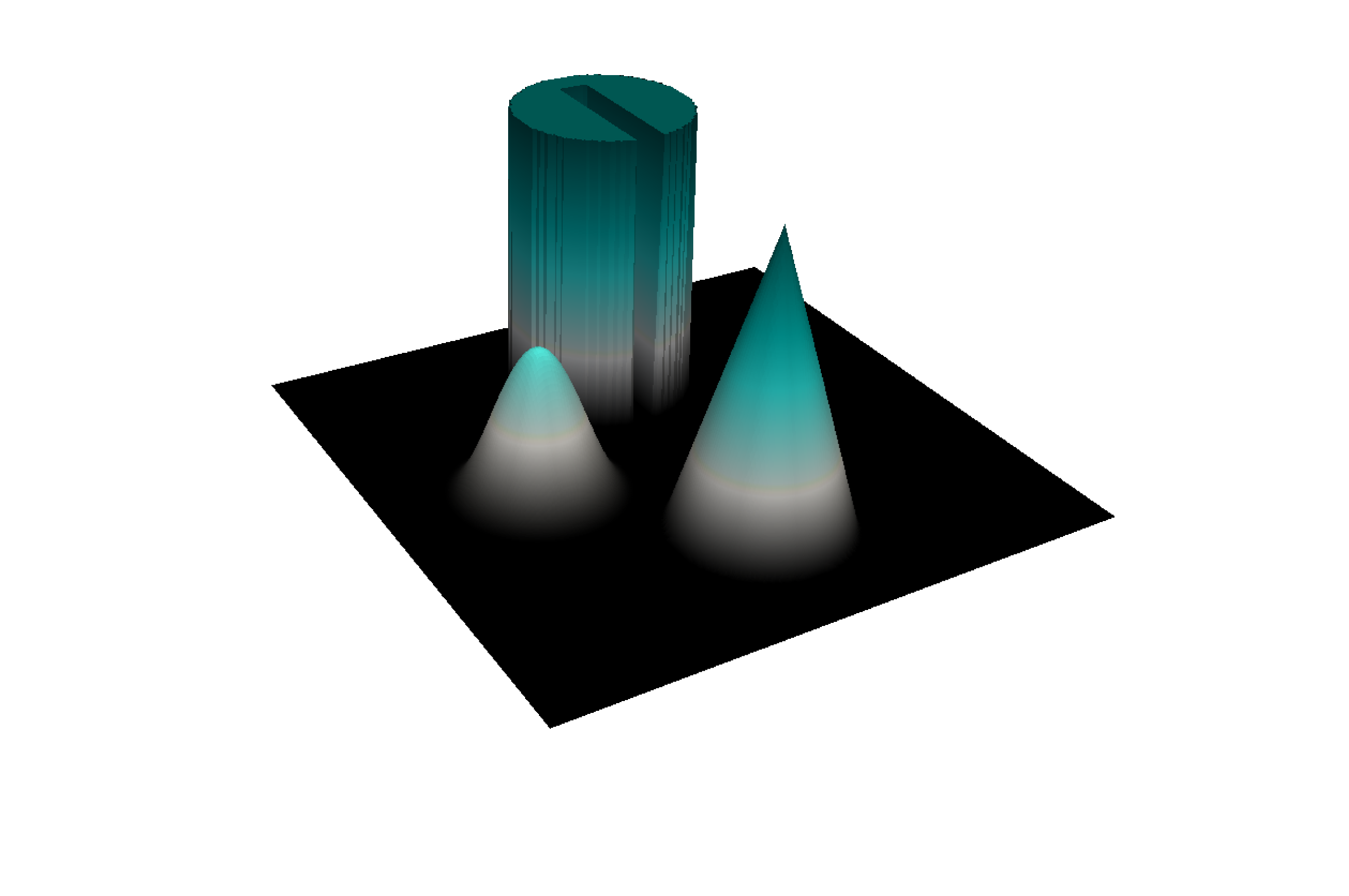}}%
	\subfigure[Legend]{\includegraphics[clip=true, trim = 35cm 5cm 2cm 8.5cm, width=2cm]{3body_initial_2d.png}\label{fig-threebodylegend}}%
	\caption{Initial solution of three body rotation test problem discretized in a $200\times 200\,Q_1$ mesh. }
	\label{fig-threebody0}
\end{figure}

The solution to the transport problem is simply a translation in the direction of the convection. In this case, the initial solution rotates counterclockwise and the final solution is computed at $T=1$ after one complete round. The idea is to compare the initial and the final values to see how dissipative the method is and, at the same time, check the values of the maximum oscillation in each time step to see if there is any violation of the DMP. In order to do so, we have used the color map plotted in \Fig{threebodylegend}, which takes colors from black to green in the interval $[0,1]$ but it uses shades of red in $[-0.1,0)\cup(1,1.1]$ and shades of yellow in $[-0.2,0.1)\cup(1.1,1.2]$. This way, it is  easy to identify the violations of the global DMP of the problem.

The solution is computed with the dG method without any stabilization,
the smoothed stabilized method (in which we consider both $\gamma=0$
and $\gamma=10^{-2}$), and the stabilized method without smoothing
($\sigma=\tau=\gamma=0$). The smoothed case with $\gamma=0$ has been
considered to reduce the mass lumping activation without spoiling the
convergence of the nonlinear iterations. As we will see in the
numerical results, although the method is less dissipative, the final
results do not differ much (see \Fig{threebody-smooth2d} and
\Fig{threebody-smooth02d}).  We recall that, for the integration in
time, a weighted mass lumping is used and the parameter $Q$ is set to
$Q=10$ in the smoothed case and $Q=+\infty$ in the non-smoothed, to
minimize the phase error induced by the lumping of the matrix. We have
run the test in a mesh of $200\times200$ bilinear elements and used
$\ntsteps=2000$ time steps, with nonlinear tolerance of $5\cdot10^{-4}$ and a
maximum of $50$ iterations. The results are plotted in
\Fig{threebody4_sol}. We have also plotted the maximum oscillation in
time in \Fig{threebody4_osc}. Actually, instead of printing the maximum
oscillation at each time step we depict the mean value of the maximum 
in each bunch of 10 time steps, for improving the result visualization
and analysis.
\begin{figure}
	\centering
	\subfigure[\scriptsize $2$D. No stabilization]{\includegraphics[clip=true,trim = 10cm 5cm 10cm 5.5cm, width=6cm]{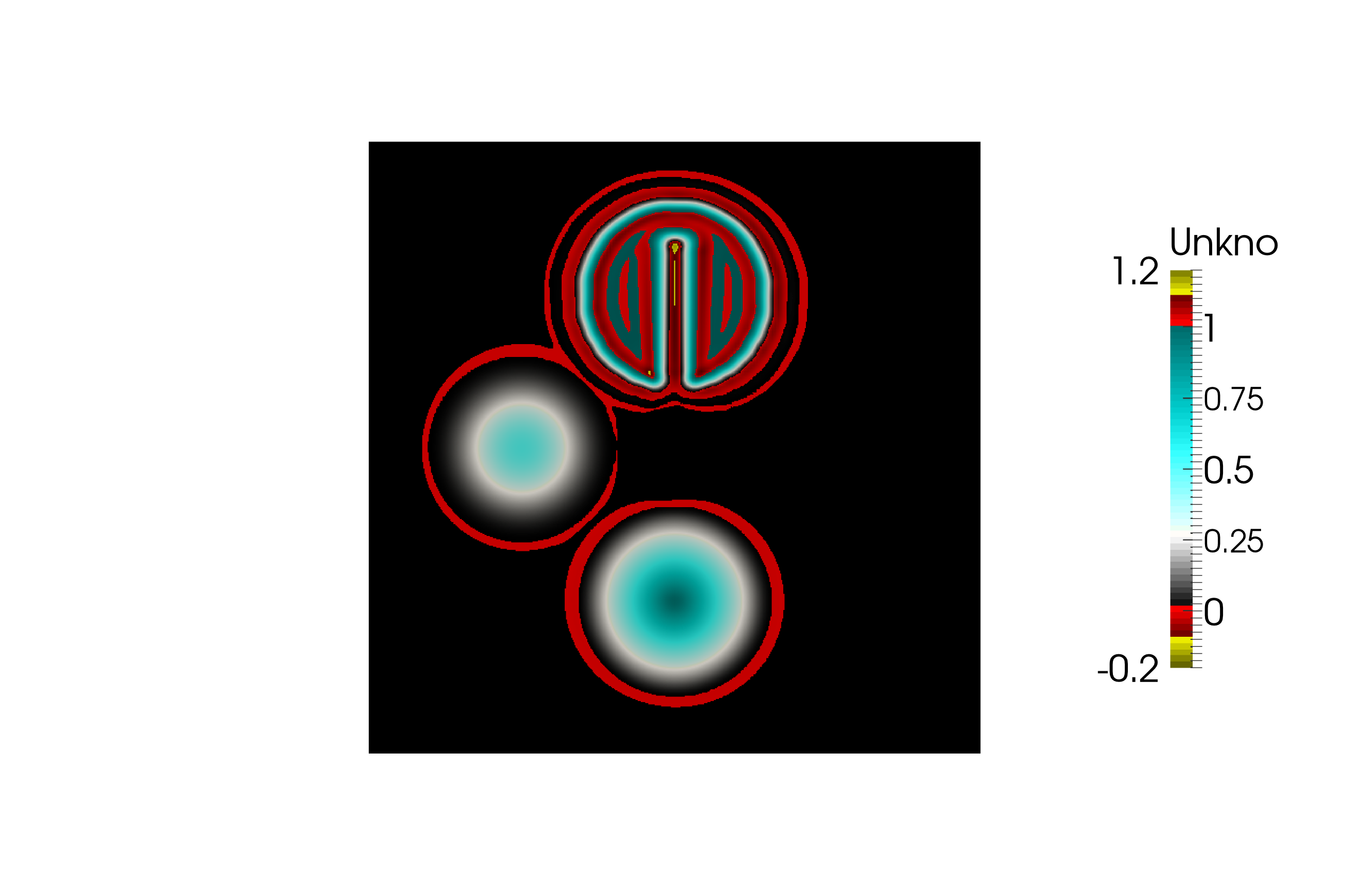}}%
	\subfigure[\scriptsize $3$D.  No stabilization]{\includegraphics[clip=true,trim = 9cm 5cm 7.5cm 0cm, width=6cm]{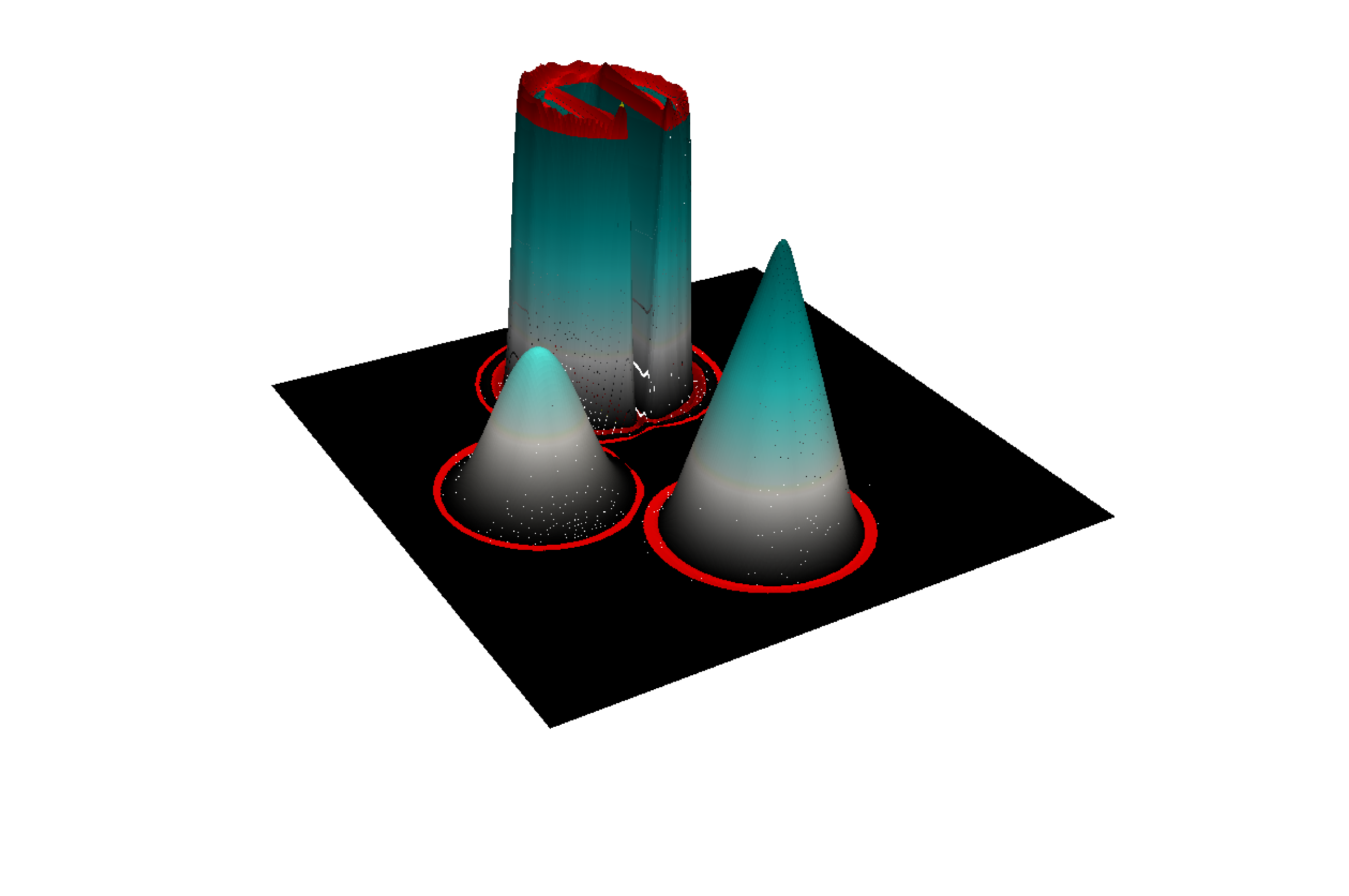}}\\
	
	\subfigure[\scriptsize $2$D. $\smoothperturbc=10^{-2}$; $\smoothalphac=10^{-4}$; $\smoothquotc=10^{-2}$; $Q=10$.]{\includegraphics[clip=true, trim = 10cm 5cm 10cm 5.5cm, width=6cm] {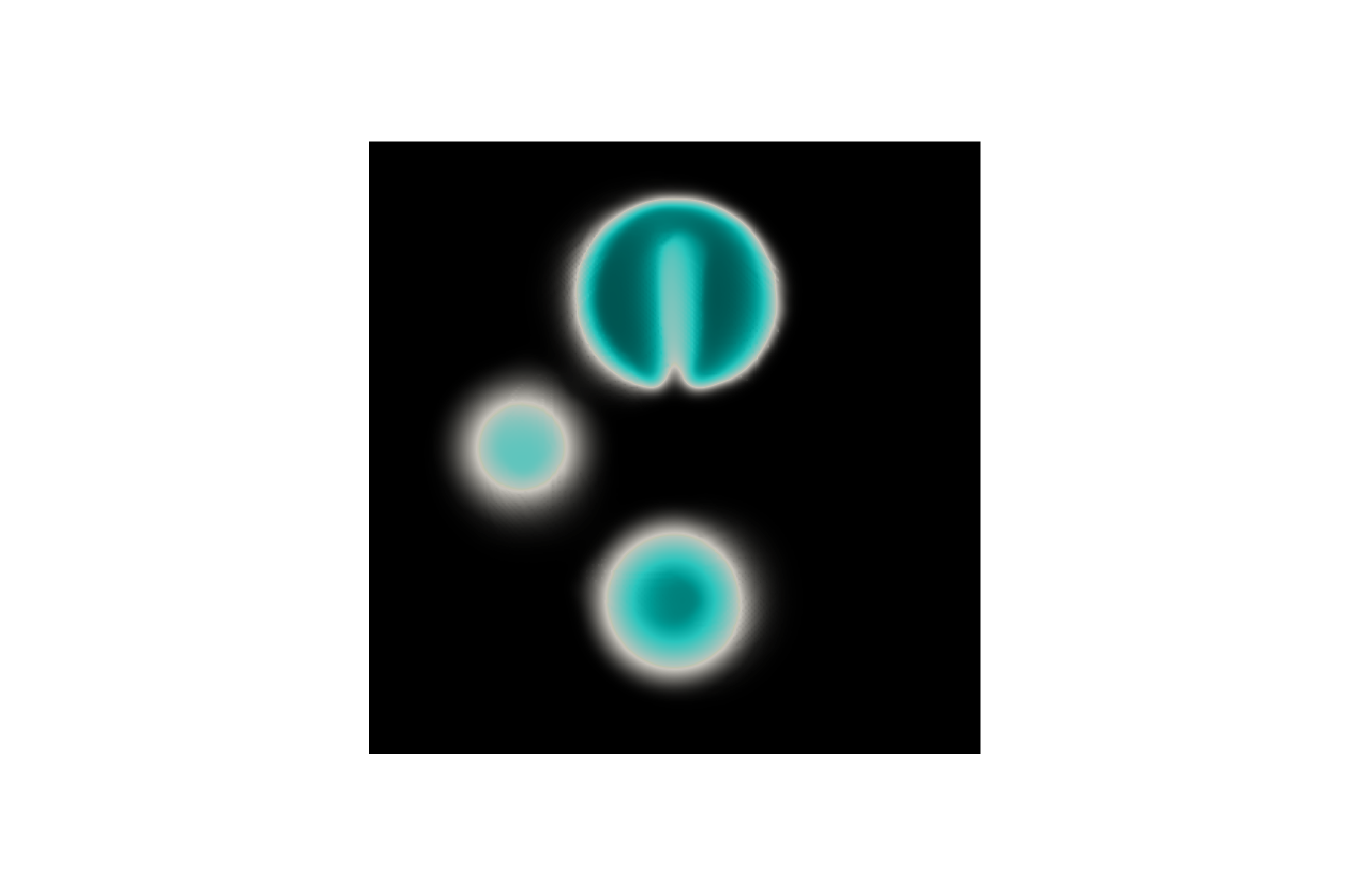}}%
	\subfigure[\scriptsize $3$D. $\smoothperturbc=10^{-2}$; $\smoothalphac=10^{-4}$; $\smoothquotc=10^{-2}$; $Q=10$.]{\includegraphics[clip=true, trim = 9cm 5cm 7.5cm 3cm, width=6cm] {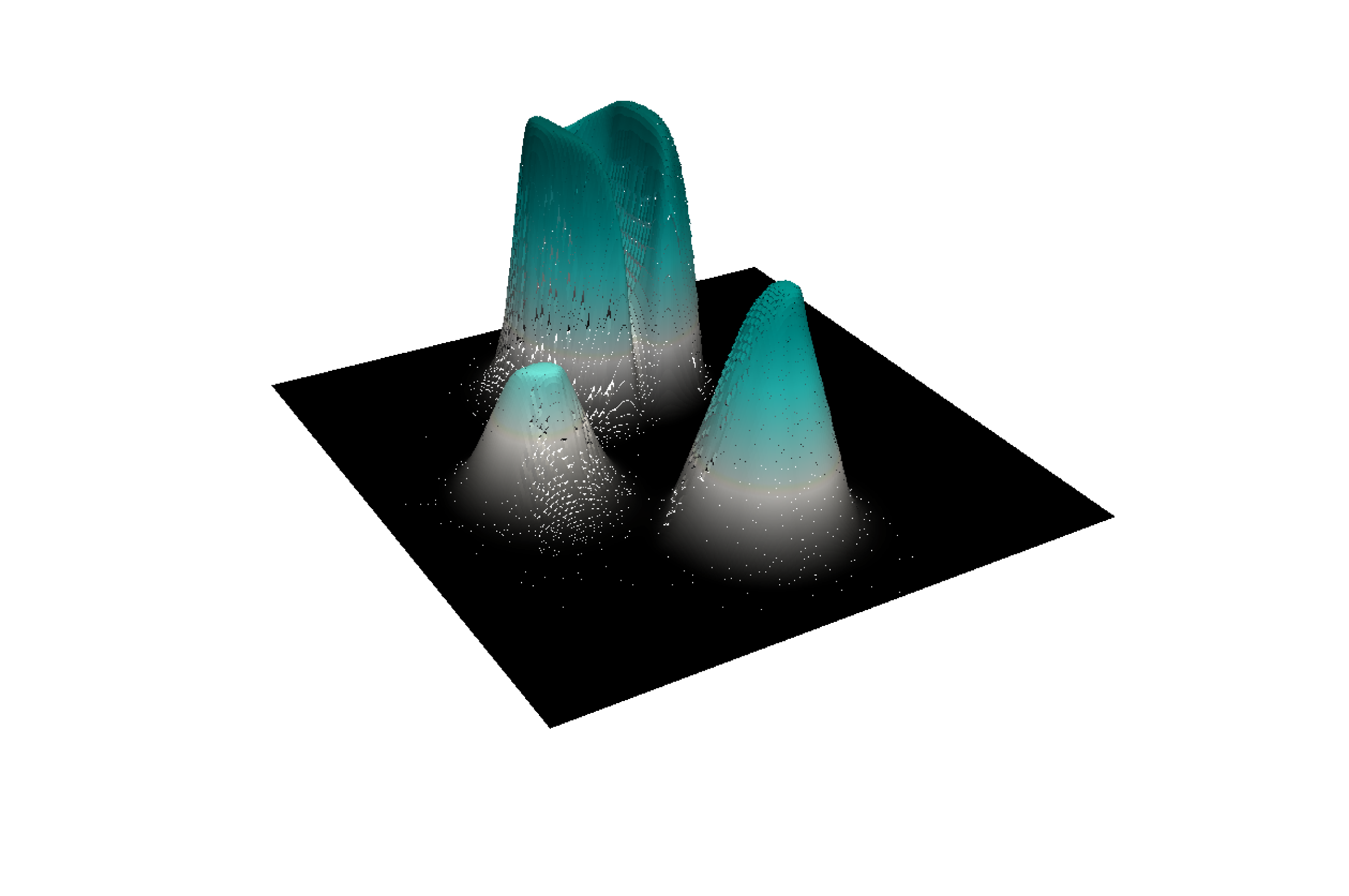}\label{fig-threebody-smooth2d}}\\
	
	\subfigure[\scriptsize $2$D. $\smoothperturbc=10^{-2}$; $\smoothalphac=10^{-4}$; $\smoothquotc=0$; $Q=10$.]{\includegraphics[clip=true, trim = 10cm 5cm 10cm 5.5cm, width=6cm] {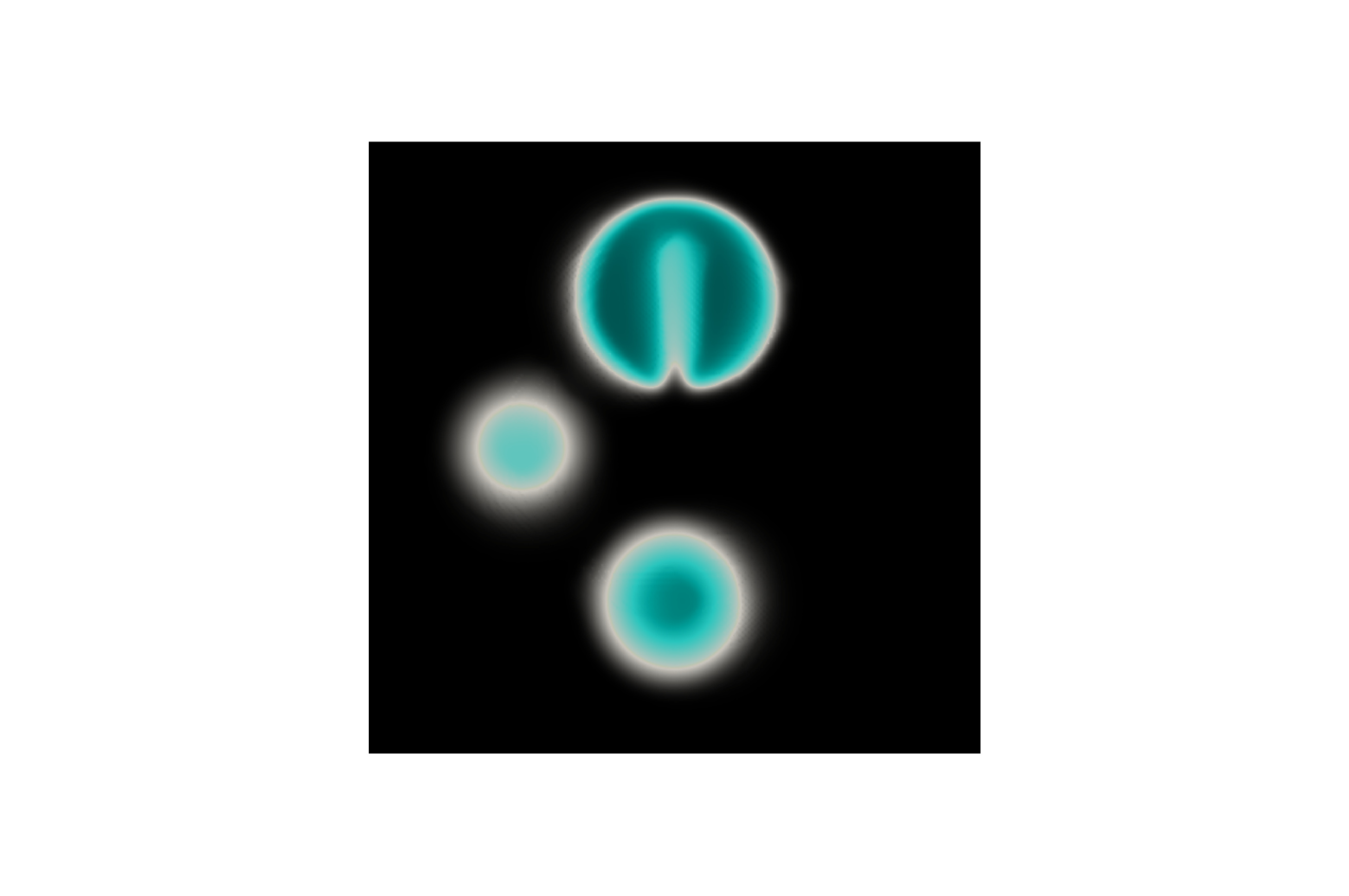}}%
	\subfigure[\scriptsize $3$D. $\smoothperturb=10^{-2}$; $\smoothalphac=10^{-4}$; $\smoothquotc=0$; $Q=10$.]{\includegraphics[clip=true, trim = 9cm 5cm 7.5cm 3cm, width=6cm] {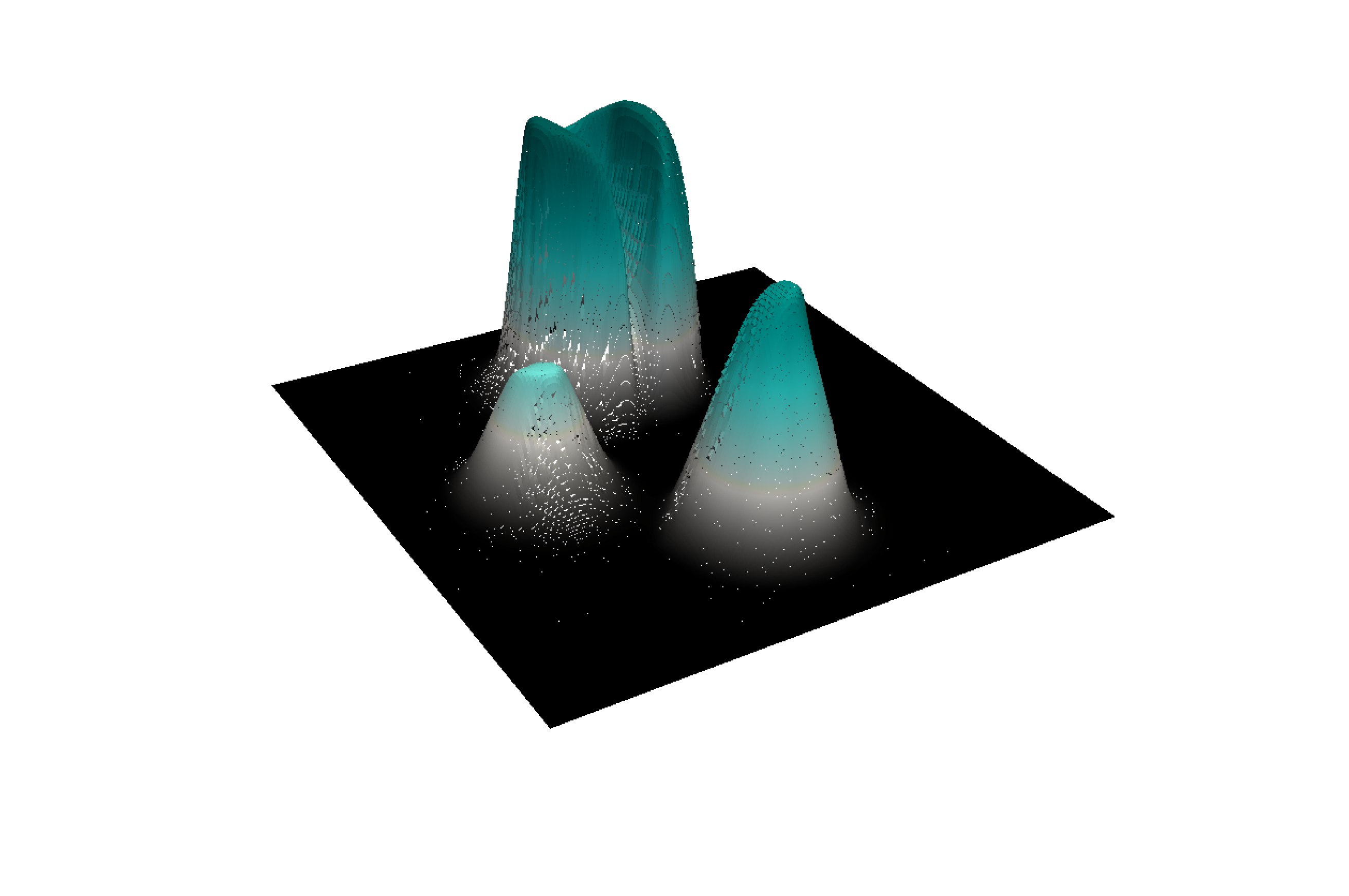}\label{fig-threebody-smooth02d}}\\
	
	\subfigure[\scriptsize $2$D. $\smoothperturbc=\smoothalphac=\smoothquotc=0$. NC.]{\includegraphics[clip=true, trim = 10cm 5cm 10cm 5.5cm, width=6cm]{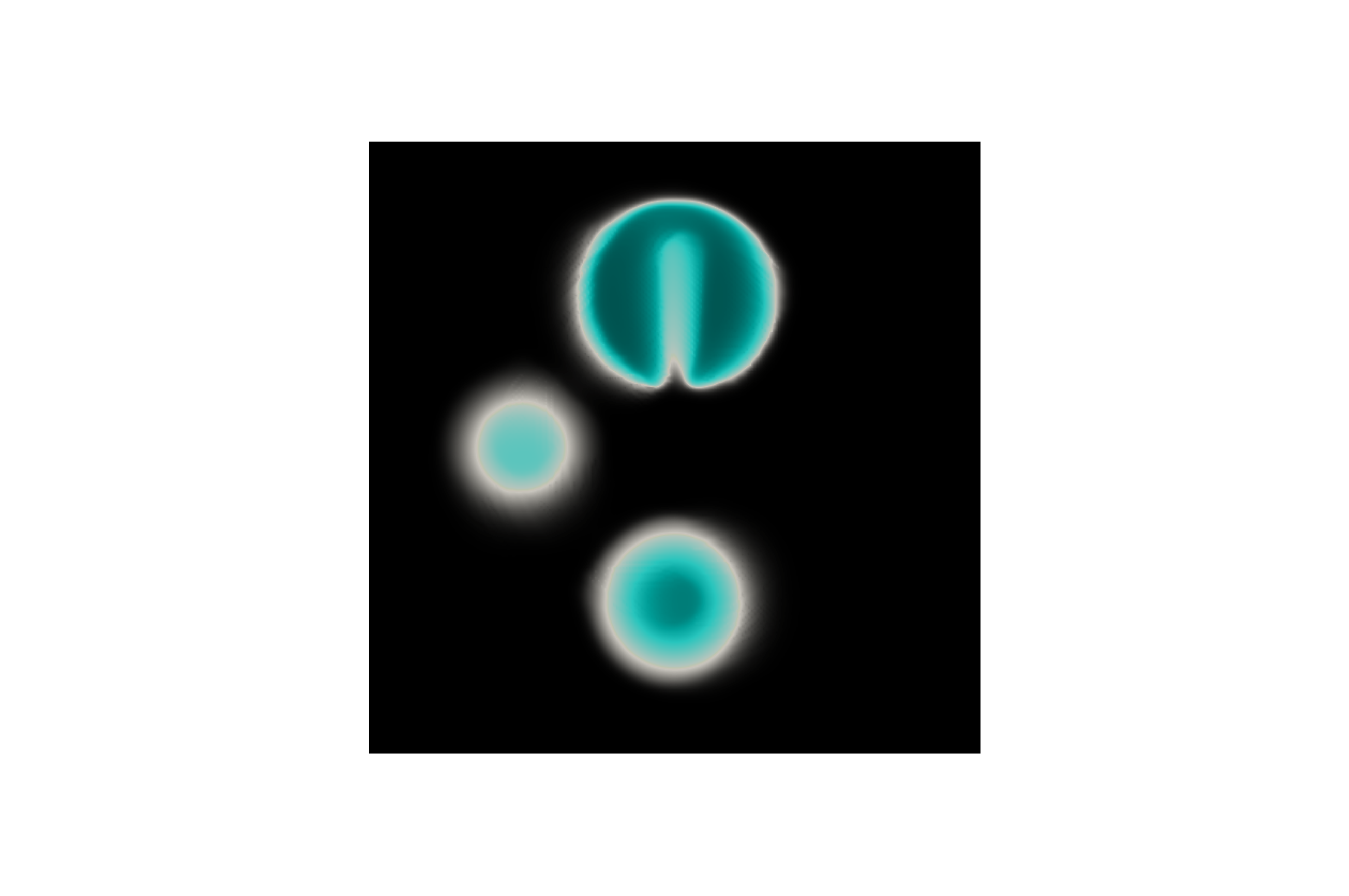}	}
	\subfigure[\scriptsize  $3$D. $\smoothperturbc=\smoothalphac=\smoothquotc=0$. NC.]{\includegraphics[clip=true, trim = 9cm 5cm 7.5cm 3cm, width=6cm]{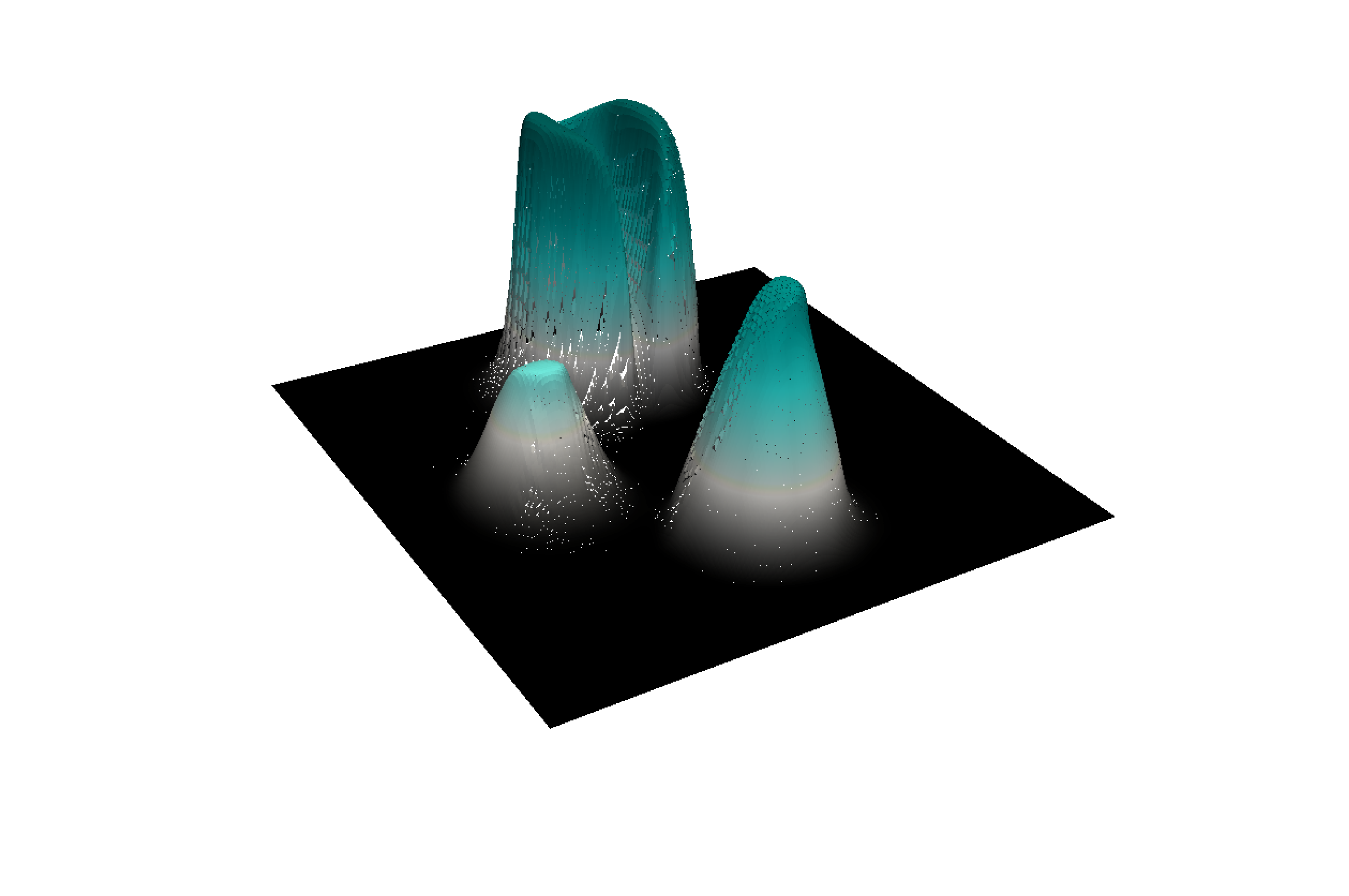}	}
	\caption{Solution of three body rotation test for different combinations of the stabilization parameters. A $200\times 200\,Q_1$ mesh and Crank-Nicolson time integration with $\ntsteps=2000$ have been used. }\label{fig-threebody4_sol}
\end{figure}
\begin{figure}
	\centering
	\includegraphics[clip=true]{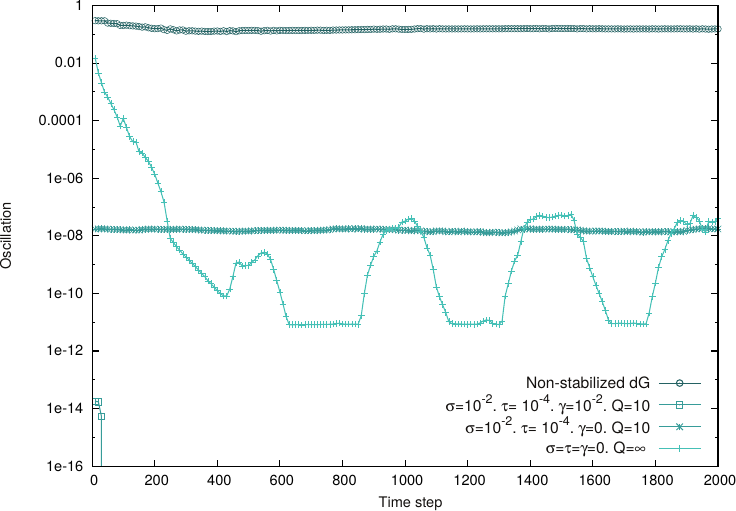}
	\caption{Oscillation values at each time step for the three body rotation test using different choices for the stabilization parameters. A $200\times 200$ mesh and Crank-Nicolson time integration with $\ntsteps=2000$ have been used. }\label{fig-threebody4_osc}
\end{figure}

We can observe in \Fig{threebody4_osc} how, if the method is not stabilized, the oscillations appear from the first iterations and do not decrease in time, being of order $10^{-1}$. On the other hand, one can observe that the stabilized version of the method gives oscillatory results on the first iterations if the method is not smoothed. This is due to the fact that the nonlinear solver is not able to converge in the first time steps and what is plotted is the result after 50 nonlinear iterations. Instead, when smoothing the stabilization, the nonlinear solver converges and the violation of the DMP is of the order of the tolerance for $\gamma=0$. When using $\gamma=10^{-2}$, the method is even more dissipative (as expected since the greater the value of $\gamma$ the more the shock detector is activated) and the DMP is only violated up to machine precision. We want to point out that the appearance of jumps on the smooth part of the figures, such as the cone, is due to the lumping of the mass matrix. When avoiding the mass lumping, those jumps disappear, but the DMP is then violated.

\section{Conclusions}\label{sec-conclusions}

In this work we have designed a method that fulfills the DMP property
for the steady multidimensional transport problem when a dG space
discretization is used. In the transient case, together with implicit time
stepping, the method enjoys the LED property. The original scheme is
stabilized with an artificial diffusion graph-Laplacian operator. The
edge diffusion is only activated on troublesome regions based on a shock
detector that relies on the jumps of the gradient of the solution
around the nodes of the mesh, in order to minimize the smearing of the
solution and improve the solution in smooth regions. We provide a set
of conditions to be satisfied by the stabilized formulation to enjoy
the DMP (and LED) properties, and designed an artificial edge
viscosity to fulfill these conditions. The results hold for arbitrary
meshes and space dimensions. The resulting method is proved to be
DMP-preserving, LED, linearity-preserving, and Lipschitz
continuous. We have also proved the existence of solutions. However,
the method is stil highly nonlinear and it is hard to attain nonlinear
solver convergence. Thus, we propose a smooth version of the scheme
that is twice differentiable and still enjoys DMP and LED
properties. We provide a set of numerical experiments to check the
features proved in the theoretical analysis, and to show the
improvement in terms of computational cost due to the combination of
the smooth version of the scheme with a Newton nonlinear solver with
line search.

Some interesting future work might be to extend these features to
approximations of much more involved equations such as Euler or
compressible Navier-Stokes problems.

\section*{Acknowledgments}

SB gratefully acknowledges the support received from the Catalan Government through the ICREA Acad\`emia Research Program. JB gratefully acknowledges the support received from ''la Caixa'' Foundation through its PhD scholarship program. AH gratefully acknowledges the support received from the Catalan Government through a FI fellowship. We acknowledge the financial support to CIMNE via the CERCA Programme  / Generalitat de Catalunya.

\bibliographystyle{siam}
\bibliography{dG_DMP_references}

\end{document}